\documentclass[12pt,reqno]{amsart}

\usepackage{amssymb}
\usepackage{amsmath}
\usepackage{hyperref}
\usepackage[all]{xypic}
\usepackage{color}
\usepackage{tikz}
\usetikzlibrary{matrix,arrows}

\addtolength{\oddsidemargin}{-1in}
\addtolength{\evensidemargin}{-1in}
\addtolength{\textwidth}{1.4in}
\addtolength{\topmargin}{-.5in}
\addtolength{\textheight}{1.0in}

\newcommand{\cor}{\text{cor}}
\newcommand{\F}{\mathbb{F}}

\newcommand{\Gal}{\text{\rm Gal}}

\newcommand{\N}{\mathbb{N}}

\newcommand{\Z}{\mathbb{Z}}

\newcommand{\ch}[1]{\textrm{char(}#1\textrm{)}}
\newcommand{\comment}[1]{}

\begin{document}

\title[Galois $p$-groups and Galois modules]{Galois $p$-groups and Galois modules}

\author[S.~Chebolu]{Sunil Chebolu}
\address{Department of Mathematics, Illinois State University, Campus Box 4520, Normal, IL \  61790 \ USA}
\email{schebol@ilstu.edu}
\thanks{The first author is partially supported by NSA grant H98230-13-1-0238}

\author[J.~Min\'{a}\v{c}]{J\'an Min\'a\v{c}}
\address{Department of Mathematics, Middlesex College, \ University
of Western Ontario, London, Ontario \ N6A 5B7 \ CANADA}
\email{minac@uwo.ca}
\thanks{The second author is partially supported by NSERC grant R0370A01}

\author[A.~Schultz]{Andrew Schultz}
\address{Department of Mathematics, Wellesley College, 106 Central Street, Wellesley, MA \ 01702 \ USA}
\email{andrew.c.schultz@gmail.com}

\subjclass[2010]{12F10 (primary), and 12F12 (secondary)}

\keywords{Galois groups, $p$-groups, Galois modules, enumerating Galois extensions, Norm residue isomorphism}

\begin{abstract}
The smallest non-abelian $p$-groups play a fundamental role in the theory of Galois $p$-extensions.  We illustrate this by highlighting their role in the definition of the norm residue map in Galois cohomology.  We then determine how often these groups --- as well as other closely related, larger $p$-groups --- occur as Galois groups over given base fields.  We show further how the appearance of some Galois groups forces the appearance of other Galois groups in an interesting way.
\end{abstract}

\dedicatory{Dedicated to Professor Albrecht Pfister.}
\date{\today}

\maketitle

\newtheorem*{theorem*}{Theorem}
\newtheorem*{lemma*}{Lemma}
\newtheorem{proposition}{Proposition}[section]
\newtheorem{theorem}[proposition]{Theorem}
\newtheorem{corollary}[proposition]{Corollary}
\newtheorem{lemma}[proposition]{Lemma}
\newtheorem{conjecture}[proposition]{Conjecture}

\theoremstyle{definition}
\newtheorem*{definition*}{Definition}
\newtheorem*{remark*}{Remark}
\newtheorem{definition}[proposition]{Definition}
\newtheorem{example}[proposition]{Example}

\parskip=10pt plus 2pt minus 2pt

\section{Introduction}

From the very beginning Galois theory carries an aura of mystery and depth.  Despite some remarkable progress, some very basic problems remain open.  Given a base field $F$ and a finite group $G$, the inverse Galois problem asks whether there is a Galois extension $K/F$ with Galois group $G$.  Even when $F$ is the field of rational numbers we do not know an answer for all $G$.  However in the remarkable paper \cite{Sha1}, I.R.~Shafarevich showed that each solvable group $G$ appears as a Galois group over any algebraic number field.  (See also \cite{Sha2} for corrections related to problems caused by the prime $2$.)  For the comprehensive treatment of Shafarevich's theorem over any global field, see \cite[Sec.~9.6]{NSW}.  For another nice exposition of Shafarevich's theorem in the original case of algebraic number fields, see  \cite[Ch.V, \S 6]{ILF}. For the special case of solvable groups $G$ whose order is a power of a prime $p$, see \cite{Sha3}.  

Although in the works above there are some considerable technical issues, some basic principles can be explained briefly.  The first basic ingredient is the description of all Galois extensions $K/F$ with Galois group $\Z/p$ over any field $F$.  If $F$ contains a primitive $p$th root of unity, this problem has a classical, elegant solution described by Kummer theory; if $\ch{F} = p$ the problem is solved by Artin-Schreier theory, and when $\ch{F}\neq p$ and $F$ does not contain  primitive $p$th root of unity, one can use Galois descent (see \cite[Ch.~6]{Lang}).  The appearance of elementary $p$-abelian groups can be described in this language as well.  Indeed, for any field $F$ there exists an $\F_p$-space $J(F)$ so that subspaces of $J(F)$ of dimension $k$ are in correspondence with elementary $p$-abelian extensions $L/F$ of rank $k$.  (We will consider $J(F)$ more fully in section \ref{sec:recasting}.)

In order to build a given group $G$ of prime-power order as a Galois group over a given field $F$, it is natural to consider Galois embedding problems.  Consider the short exact sequence 
$$\xymatrix{1 \ar[r]& \Gal(L/K) \ar[r]&\Gal(L/F) \ar[r]& \Gal(K/F) \ar[r]&1}$$
from Galois theory. If you are given $G \twoheadrightarrow Q$ and you know $\Gal(K/F) \simeq Q$, can you find a Galois extension $L/F$ as above so that $\Gal(L/F) \simeq G$, with the natural map from Galois theory corresponding to the original surjection $G \twoheadrightarrow Q$?  This is the Galois embedding problem for $G \twoheadrightarrow Q$ over the extension $K/F$.

Consider first $p>2$. The smallest nonabelian groups of prime-power order have order $p^3$, and up to group isomorphism there are exactly two such groups (see, e.g., \cite[p.~185--186]{DF}).  The Heisenberg group, which we write $H_{p^3}$, is the unique nonabelian group of order $p^3$ and exponent $p$.  This group is isomorphic to $U_3(\F_p)$, the group of upper triangular matrices over a field with $p$-elements with all diagonal elements $1$.  The modular group, which we write $M_{p^3}$, is the unique nonabelian group of order $p^3$ and exponent $p^2$.  If $p=2$ there are also two nonabelian groups of order $8$: the dihedral group $D_4$ (isomorphic to $U_3(\F_2$)) and the quaternion group $Q_8$.  $H_{p^3}$ and $M_{p^3}$ have the following presentations by generators and relations:
\begin{equation}\label{eq:group.presentations}
\begin{split}
H_{p^3} &= \left\langle \sigma,\tau,\omega~|~\sigma^p=\tau^p=\omega^p = [\omega,\sigma] = [\omega,\tau] = \mbox{id}, [\sigma,\tau] = \omega \right\rangle\\
M_{p^3} &=\left\langle x,y~|~y^{p^2}=x^p=\mbox{id}, [x,y]=y^p \right\rangle = \langle y \rangle \rtimes \langle x \rangle.
\end{split}
\end{equation}

It is worth observing that $M_{p^3}$ is one group in the larger family of groups of the form $M_{p^n} = \langle y: y^{p^{n-1}}=1\rangle \rtimes \langle x: x^p=1\rangle$.  When $p=2$ and $n>3$, there are four non-abelian groups of order $2^n$ which have an element of order $2^{n-1}$; $M_{2^n}$ is obviously one of them, and some automatic realization results for these four groups were considered in \cite{J1}.  (Jensen uses $F_{2^n}$ to denote the group we're calling $M_{2^n}$.)  For odd $p$, the group $M_{p^n}$ is the only non-abelian group of order $p^{n}$ which contains an element of order $p^{n-1}$ (see, e.g., \cite[\S 12.5]{Hall}). Michailov described in \cite{Mich1} the realizability conditions and Galois extensions for $M_{p^n}$, as well as several other groups related to this group.

The nonabelian groups of order $p^3$, along with the cyclic groups of order dividing $p^2$, play a surprising basic role in the structure of some canonical quotients of absolute Galois groups of fields containing a primitive $p$th root of unity.  For $p=2$, based on work of F.~Villegas in \cite{MSp}, it was shown that the fixed field of the third term of the $2$-descending central sequence of the absolute Galois group is the compositum of all Galois extensions of the base field with Galois group isomorphic to $\Z/2$, $\Z/4$ or the dihedral group of order $8$.  Similar results were obtained in \cite{EM11} for $p>2$ with groups $\Z/p, \Z/p^2$ and $M_{p^3}$.  When replacing the $2$-descending central sequence by the Zassenhaus descending sequence in \cite{EM2}, an analogous result for groups $\Z/p$ and $H_{p^3}$ was shown.  On the other hand, in \cite{CEM,EM2,EM12,MSp2} it was shown that these quotients of absolute Galois groups of fields as above encode crucial information about Galois cohomology, valuations and orderings of fields.  These fundamental results can be viewed also as precursors to current investigations in \cite{Ef,Ef2,HW,MT4,MT1,MT2,MT3} related to Massey products in Galois cohomology.

If $F$ is a field containing a primitive $p$th root of unity $\xi_p$, and if $a,b \in F^\times$ give rise to a $\Z/p \times \Z/p$ extension $F(\root{p}\of{a},\root{p}\of{b})/F$, then it is known (see \cite[(3.6.4)]{Lbook}, \cite[Th.~3(A)]{Mass},\cite[Th.~3.1]{Mich2}) that the embedding problem corresponding to
\begin{equation}\label{eq:hp3.embedding.problem}\xymatrix{1 \ar[r] & \Z/p \ar[r]&H_{p^3} \ar[r] & \Z/p \times \Z/p \ar[r] & 1}\end{equation}
is solvable over $F(\root{p}\of{a},\root{p}\of{b})$ if and only if $b \in N_{F(\root{p}\of{a})/F}(F(\root{p}\of{a})^\times)$.  The analogous embedding problem for $M_{p^3}$ is solvable over $F(\root{p}\of{a},\root{p}\of{b})$ if and only if $b \xi_p^{k} \in N_{F(\root{p}\of{a})/F}(F(\root{p}\of{a})^\times)$ for some $k \in \Z \setminus p\Z$  (see \cite[Cor, p.~523--524]{Mass}, \cite[Th.~1]{Br} or \cite[Th.~3.2]{Mich2}.)  It is also known that the existence of an extension $K/F$ with $\Gal(K/F) \simeq H_{p^3}$ implies the existence of an extension $L/F$ with $\Gal(L/F) \simeq M_{p^3}$ (see \cite[Thm.~2]{Br}). In some special cases, more precise results were obtained.  Let $\nu(F,G)$ be the number of extensions of $F$ whose Galois group is $G$ (in a fixed algebraic closure $\bar F$), then Brattstr\"om has proved (\cite[Th.~5]{Br}) that when $\ch{F}=p$ or $\xi_{p^2} \in F$, one has
$$\nu(F,M_{p^3}) = (p^2-1)\nu(F,H_{p^3}).$$  

The purpose of this paper is to investigate $H_{p^3}, M_{p^3}$ and their closely related $p$-groups as Galois groups.  We investigate the number of Galois extensions with given Galois group $G$ as above and the interrelation between these numbers.  Section \ref{sec:norm.residue} discusses the fundamental relation in Galois cohomology.  Assume that $a$ and $1-a$ are non-zero element in a field $F$ and $\xi_p \in F$.  Then by Kummer theory we have associated classes $(a), (1-a) \in H^1(G_F,\F_p)$.  Here $G_F$ is the absolute Galois group of $F$ and $H^i(G_F,\F_p)$ is the $i$th group cohomology of $G_F$ with $\F_p$-coefficients viewed as a trivial module over $G_F$.  Then $(a) \cup (1-a) =0 \in H^2(G_F,\F_p)$, a result known as the Bass-Tate lemma (see Proposition \ref{prop:basstate} below).  We discuss how this relation is connected to the solvability of embedding problem (\ref{eq:hp3.embedding.problem}) over the field $F(\root{p}\of{a},\root{p}\of{1-a})$.  We will use this as motivation for providing an ``elementary" proof of this vanishing for $p>2$ akin to the result of Pfister for $p=2$ in \cite{Pf}.  Pfisters's proof is remarkable as it uses nothing but the definition of Galois cohomology, but the impetus behind the selection of the desired coboundary is not explained.  In our treatment we also keep the elementary nature of the proof, but we shed light on the construction of the desired coboundary.  

The Bass-Tate lemma has a fairly short proof (see section \ref{sec:norm.residue} below), but it is nevertheless a deep statement which implies automatic realizations for Galois groups $H_{p^3}$, $\Z/4$ and the dihedral group of order $8$.  It also connects Galois theory directly to the crucial relation between addition and multiplication in the base field.  This connection has some profound consequences for birational anabelian geometry (see \cite{BT1,BT2,BT3,Pop1,Pop2,T1,T2}).

In section \ref{sec:recasting} we change our methodology for investigating $H_{p^3}$ and $M_{p^3}$ extensions by replacing the embedding problem from (\ref{eq:hp3.embedding.problem}) to one that comes from a short exact sequence of the form \begin{equation}\label{eq:hp3.embedding.problem.redux}\xymatrix{1 \ar[r] & \Z/p \times \Z/p \ar[r]&G \ar[r] & \Z/p \ar[r] & 1.}\end{equation}  (Both $H_{p^3}$ and $M_{p^3}$ fit in such an exact sequence.) If $K/F$ is the base extension satisfying $\Gal(K/F) \simeq \Z/p$, this new perspective allows us to parameterize solutions to these embedding problems over $K/F$ by certain $\Gal(K/F)$-submodules within $J(K)$.  This change in perspective also allows us to exhibit $H_{p^3}$ and $M_{p^3}$ within a larger family of $p$-groups (namely those which can be written as an extension of $\Z/p^n$ by a cyclic $\F_p[\Z/p^n]$-module) for which the solvability of the associated embedding problem is again tied to the appearance of certain Galois modules in $J(K)$.   In the final two sections we use this module-theoretic perspective to give generalizations of some of the known results concerning the appearance of $M_{p^3}$ and $H_{p^3}$ as Galois groups to this broader family of groups.

\subsection{Acknowledgements} We gratefully acknowledge discussions and collaborations with our friends and colleagues I.~Efrat, J.~G\"{a}rtner, S.~Gille, D.~Hoffmann, J.~Labute, J.~Swallow, N.D.~Tan, A.~Topaz, R.~Vakil and K.~Wickelgren which influenced our thinking and the results presented in this paper. We are also grateful to the referee for his/her valuable suggestions which we used to improve the exposition of this paper.

\section{Norm residue homomorphism and Galois modules}\label{sec:norm.residue}

One of the most exciting theorems from the last decade was the proof of the norm residue isomophism (previously, the Bloch-Kato conjecture).  To state this theorem, we remind the reader of some of important terms.  Throughout this section we assume that $p$ is a given prime number and that $F$ is a field which contains a primitive $p$th root of unity $\xi_p$.  We denote the separable closure of $F$ by $F_{\text{\tiny{sep}}}$ and the associated absolute Galois group by $G_F:=\Gal(\bar F_{\mbox{\tiny{sep}}}/F)$. When we speak of the cohomology of $F$, we mean the cohomology groups associated to the trivial $G_F$-module $\F_p$: $H^m(F):=H^m(G_F,\F_p)$. 

The Milnor $K$-groups $K_mF$ attached to $F$ are defined as $K_0F = \Z$, $K_1F = F^\times$ (the multiplicative group of $F$), and for $m>1$ as $$K_mF :=  (F^\times)^{\otimes m}/\langle a_1\otimes \cdots \otimes a_m: \exists 1\leq i < j \leq m \mbox{ so that } a_i+a_j=1\rangle.$$  Here $F^\times$ is viewed as an abelian group and the tensor product is over $\Z$.  In fact, we obtain a graded ring $K_*(F) = \oplus_{m=0}^\infty K_m(F)$ where the product is induced by tensor products.   We define the reduced Milnor $K$-groups as $k_mF:=K_mF/\langle p \rangle$, and they too form a graded ring which is also a graded vector space over $\F_p$.  An element of $k_mF$ which is represented by $f_1 \otimes \cdots \otimes f_m$ is written in the form $\{f_1,\cdots,f_m\}$.  For basic properties of Milnor $K$-theory we refer the reader to \cite{GS,Mi,Pf,Srin}.

Now since $\xi_p \in F$ we have $H^1(F) \simeq F^\times/F^{\times p}$, and so it is obvious that $H^1(F)$ and $k_1F$ are isomorphic.  The norm residue isomorphism says that this is true of higher reduced $K$-groups and cohomology as well, and that this isomorphism respects the underlying ring structures. 

The following theorem proved by M.~Rost and V.~Voevodsky (see \cite{V}) is a substantial advance in Galois cohomology which builds on previous work of J.~Arason, H.~Bass, S.~Bloch, R.~Elman, B.~Jacob, K.~Kato, T.Y.~Lam, A.~Merkurjev, J.~Milnor, A.~Suslin, J.~Tate and others.  See also \cite{P2} for a very nice survey concerning this theorem in the case $p=2$.

\begin{theorem*}[Norm residue isomorphism] The rings $k_*F$ and $H^*(F)$ are isomorphic via the map $h:k_*F \to H^*(F)$ defined by $h\left(\{f_1,\cdots,f_m\}\right)= (f_1) \cup \cdots \cup (f_m)$.
\end{theorem*}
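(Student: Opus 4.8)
This statement is the norm residue isomorphism theorem (formerly the Bloch--Kato conjecture), and a genuine proof is the work of many hands over several decades; what I can offer is the skeleton of the argument one assembles from \cite{V} and its antecedents, together with an indication of where the real difficulty sits. The plan separates cleanly into a formal/elementary part --- that $h$ is a well-defined homomorphism of graded rings which is an isomorphism in low degrees --- and the deep part --- that $h$ is bijective in every degree.

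For the formal part I would first check well-definedness. On the tensor algebra $(F^\times)^{\otimes m}$ the assignment $f_1 \otimes \cdots \otimes f_m \mapsto (f_1)\cup \cdots \cup(f_m)$ is visibly defined; to factor it through $k_m F$ one must see that it kills $p$ (clear, since $H^1(F)$ is $p$-torsion) and kills every $a_1 \otimes \cdots \otimes a_m$ with $a_i + a_j = 1$ for some $i<j$. By graded-commutativity of the cup product the latter reduces to the single assertion $(a)\cup(1-a) = 0$ in $H^2(F)$ for $a, 1-a \in F^\times$, which is precisely the Bass--Tate lemma (Proposition \ref{prop:basstate}), and for which the paper will give an elementary proof. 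Graded-commutativity of cup products --- together with the consequence $\{a,-a\}=0$ which it forces --- then makes $h$ a homomorphism of graded rings. In degree $0$, $k_0F = \Z/p\Z \cong H^0(F)$ and $h$ carries $1$ to the empty cup product; in degree $1$, $k_1F = F^\times/F^{\times p}$ and $h$ is the Kummer isomorphism $F^\times/F^{\times p}\xrightarrow{\ \sim\ } H^1(F)$ available because $\xi_p \in F$.

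The content is bijectivity of $h$ in degrees $m \geq 2$. Degree $2$ is the Merkurjev--Suslin theorem. For general $m$ one runs Voevodsky's inductive machine. The main steps are: (i) translate the statement into motivic cohomology, reducing the norm residue isomorphism in weight $m$ to the Beilinson--Lichtenbaum conjecture $\mathrm{BL}(m)$ (the comparison map from motivic cohomology to \'etale cohomology $H^{i}_{\mathrm{et}}(-,\mu_n^{\otimes m})$ is an isomorphism for $i \leq m$ and a monomorphism for $i = m+1$), so that surjectivity of $h$ together with a weight-$m$ ``Hilbert 90'' (the vanishing of a specified motivic cohomology group) yield the result; (ii) set up the induction on $m$, assuming $\mathrm{BL}$ in all lower weights, and attach to a nonzero symbol $\{a_1,\dots,a_m\}$ a \emph{norm variety} (a Rost $\nu_{m-1}$-variety of dimension $p^{m-1}-1$ that splits the symbol and carries prescribed characteristic numbers), whose existence is Rost's theorem; (iii) split off the associated \emph{Rost motive} inside the motive of this variety, compute enough of its motivic cohomology, and combine this with Voevodsky's motivic Steenrod operations and the induced Milnor operations $Q_i$ in a Margolis-homology style argument to force the required vanishing and close the induction.

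The hard part --- and the reason the theorem resisted proof for so long --- is exactly steps (ii) and (iii): one needs both the existence of norm varieties carrying precisely the right motivic-cohomological and characteristic-number data (due to Rost, with the construction also drawing on Suslin--Joukhovitski), and a functioning theory of motivic cohomology operations over an arbitrary base field together with the computation of the motivic cohomology of Rost motives (due to Voevodsky, with input from Orlov--Vishik--Voevodsky). Everything else --- the descent of $h$ to $k_*F$, its multiplicativity, the cases $m \leq 1$, and the homological bookkeeping that converts $\mathrm{BL}(m)$ into the asserted statement about $h$ --- is comparatively formal. None of this is reproved in the present paper: the theorem is used as a black box, and the only ingredient whose proof we return to is the Steinberg (Bass--Tate) relation underlying well-definedness.
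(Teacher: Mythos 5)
Your proposal is correct and matches the paper's treatment: the paper does not prove this theorem but cites it as the deep result of Rost and Voevodsky, and the only ingredient it returns to is the Bass--Tate lemma establishing well-definedness of $h$, exactly as you say. Your sketch of the formal part (descent through the Steinberg relation, multiplicativity, the degree $\leq 1$ cases) and of the architecture of the Rost--Voevodsky argument is accurate, and appropriately identifies where the genuine difficulty lies.
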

Although the following proposition can be proved relatively simply, it is the first step in the long journey of proving the norm residue isomorphism.  J.~Milnor in \cite[p.~339--340]{Mi} says that
this result originated with Bass and Tate.  
\begin{proposition}[Bass-Tate Lemma]\label{prop:basstate} The map $h:k_*F \to H^*(F)$ is well-defined.
\end{proposition}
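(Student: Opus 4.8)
The plan is to organize the verification around the relations defining $k_*F$ as a quotient of the tensor algebra on $F^\times$, and to see that everything reduces to a single degree-two identity. First I would observe that the assignment $f_1\otimes\cdots\otimes f_m\mapsto (f_1)\cup\cdots\cup(f_m)$ is already a well-defined homomorphism on the tensor algebra $\bigoplus_{m\ge 0}(F^\times)^{\otimes m}$: by Kummer theory the map $F^\times\to H^1(F)$, $f\mapsto (f)$, is a group homomorphism (that is, $(fg)=(f)+(g)$), and the cup product is biadditive, so the assignment is $\Z$-multilinear in its arguments. To push this down to $k_*F=K_*F/\langle p\rangle$ I then need two things. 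First, the induced map must vanish on $pK_*F$; this is automatic because $H^*(F)=H^*(G_F,\F_p)$ is an $\F_p$-vector space and hence has exponent $p$. Second, it must kill each Steinberg generator $f_1\otimes\cdots\otimes f_m$ with $f_i+f_j=1$ for some $i<j$; using graded-commutativity of the cup product on $H^*(G_F,\F_p)$ I can reorder the factors, at worst at the cost of an overall sign, so that $(f_i)$ and $(f_j)$ become consecutive, and then the product is visibly a multiple of $(f_i)\cup(f_j)$. Thus the whole statement comes down to proving, for $a\in F^\times$ with $1-a\in F^\times$, that $(a)\cup(1-a)=0$ in $H^2(F)$.

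For this degree-two vanishing I would argue as follows. If $a\in F^{\times p}$ then $(a)=0$ and there is nothing to prove, so suppose $a\notin F^{\times p}$. Since $\xi_p\in F$, the polynomial $x^p-a$ is irreducible over $F$ and $L:=F(\root{p}\of{a})$ is a cyclic degree-$p$ extension of $F$, Galois because it contains all the roots $\xi_p^i\root{p}\of{a}$. Writing $\alpha=\root{p}\of{a}$ and factoring $x^p-a=\prod_{i=0}^{p-1}(x-\xi_p^i\alpha)$, evaluation at $x=1$ gives the key identity
\[
1-a=\prod_{i=0}^{p-1}(1-\xi_p^i\alpha)=N_{L/F}(1-\alpha).
\]
Under the Kummer isomorphisms $H^1(F)\simeq F^\times/F^{\times p}$ and $H^1(L)\simeq L^\times/L^{\times p}$ the corestriction $\cor_{L/F}\colon H^1(L)\to H^1(F)$ is the field norm, so this identity reads $(1-a)=\cor_{L/F}\big((1-\alpha)\big)$. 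The projection formula then yields
\[
(a)\cup(1-a)=(a)\cup\cor_{L/F}\big((1-\alpha)\big)=\cor_{L/F}\Big(\text{res}_{L/F}\big((a)\big)\cup(1-\alpha)\Big),
\]
and since $a=\alpha^p$ is a $p$th power in $L$, we have $\text{res}_{L/F}\big((a)\big)=0$ in $H^1(L)$; hence $(a)\cup(1-a)=0$, as desired.

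The only substantive point is this last identity, and along the route above its difficulty is structural rather than computational: it rests on identifying $\cor_{L/F}$ on $H^1$ with the field norm and on the projection formula $\cor_{L/F}(\text{res}_{L/F}(x)\cup y)=x\cup\cor_{L/F}(y)$, both standard properties of Galois cohomology. Granting these, the argument is the short proof indicated in the text. A more self-contained route --- the one the paper signals it will carry out for motivational reasons --- is instead to write down an explicit $1$-cochain on $G_F$ whose coboundary is a $2$-cocycle representing $(a)\cup(1-a)$, in the style of Pfister's treatment of the case $p=2$. There the obstacle moves to \emph{finding}, and above all \emph{explaining}, the correct cochain: this is exactly the step Pfister left unmotivated and that this paper sets out to illuminate. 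Accordingly I would give the corestriction proof here and defer the hands-on cocycle construction to the ensuing discussion.
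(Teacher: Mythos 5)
Your proof is correct and follows essentially the same route as the paper: the reduction to the single identity $(a)\cup(1-a)=0$, the norm factorization $1-a=N_{F(\root{p}\of{a})/F}(1-\root{p}\of{a})$, and the projection formula, with your observation that $\mathrm{res}_{L/F}\bigl((a)\bigr)=0$ being exactly the paper's step $(a)=p\bigl(\root{p}\of{a}\bigr)$ over $L$. The preliminary reductions you spell out (multilinearity, vanishing on $pK_*F$, reordering via graded-commutativity) are implicit in the paper but entirely standard.
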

For this, we need to ensure that the defining relation on the level of $K$-theory maps to a coboundary in cohomology.  In other words, we need to show that if $a \in F^\times \setminus \{1\}$ is given, then the element $(a) \cup (1-a)$ is trivial in cohomology. Because the standard proof of this result is rather short and interesting, we shall include it here.  (See \cite[Lem.~8.1]{Srin} or \cite[Sec.~4.6]{GS}.) 
\begin{proof}
Because $(b^p) \cup (1-b^p) = 0$ for all $b \in F^\times$ such that $1 \neq b^p$, we shall assume that $a \not\in F^{\times p}$. 
Since we assume $\xi_p \in F$, we have the factorization $1-a = \prod_{i=0}^{p-1} \left(1-\xi_p^i \root{p}\of{a}\right)$ in the field $F(\root{p}\of{a})$.  This factorization, however, is equivalent to taking the norm $N_{F(\root{p}\of{a})/F}\left(1-\root{p}\of{a}\right)$, and so we have
\begin{align*}
(a) \cup (1-a) &= (a) \cup \left(N_{F(\root{p}\of{a})/F}\left(1-\root{p}\of{a}\right)\right).
\end{align*}
Now the projection formula (see \cite[Prop.~1.5.3(iv)]{NSW}) tells us that when $K/F$ is a Galois extension with $f \in F^\times$ and $k \in K^\times$, then the element $(f) \cup (N_{K/F}(k)) \in H^2(F)$ is equal to $\cor_{K/F}\left((f)\cup (k)\right)$, where here $\cor_{K/F}: H^2(K) \to H^2(F)$ is the corestriction map and the element $(f) \cup (k) \in H^2(K)$. Applied to our previous equation, the projection formula gives us
\begin{align*}
(a) \cup (1-a) &= (a) \cup \left(N_{F(\root{p}\of{a})/F}\left((1-\root{p}\of{a})\right)\right)\\
&=\cor_{F(\root{p}\of{a})/F}\left((a) \cup (1-\root{p}\of{a})\right)\\
&=\cor_{F(\root{p}\of{a})/F}\left(p\left(\root{p}\of{a}\right)\cup (1-\root{p}\of{a})\right)\\
&= \cor_{F(\root{p}\of{a})/F}\left(0\right) = 0.
\end{align*}
\end{proof}

This proof is certainly elegant, though the critical use of the projection formula prevents us from seeing the result in an ``elementary way" (i.e., one which exhibits a given cochain as a coboundary).  In the case $p=2$, Pfister gave just such an elementary proof of the Bass-Tate Lemma in \cite{Pf}. Our goal in this first section is to prove the vanishing of $(a) \cup (1-a)$ in two different ways: first by exploiting the known connection between the vanishing of $(x) \cup (y)$ and a particular Galois embedding problem concerning $H_{p^3}$; and then by giving an ``elementary" proof for $p>2$ akin to Pfister's proof for $p=2$.  In fact, in both proofs certain Galois extensions with Galois group $H_{p^3}$ play a key role.  If we consider $p=2$ instead, then the Galois extensions to be considered have Galois group either $\Z/4$ or the dihedral group of order $8$.

\subsection{A Proof of Bass-Tate via Galois embedding problems}\label{sec:vanishing.via.embedding}

The goal of this next subsection is to give an alternate proof of the vanishing of $(a) \cup (1-a)$.  We shall also assume that $p>2$, as the case $p=2$ is similar and no new insight is obtained when considering this case.  For the rest of this section we will assume that $a$ and $1-a$ are independent in $F^\times/F^{\times p}$.  This is a reasonable assumption as otherwise $(1-a) \cup (a)$ vanishes because the cup product is an anticommutative bilinear map $H^1(G_F,\F_p) \times H^1(G_F,\F_p) \to H^2(G_F,F_p)$ (see \cite[Ch.~1,Sec.~4]{NSW}).  It is possible to shorten our exposition below using \cite[Sec.~6.6]{JLY} or alternatively \cite[Th.~3.1]{Mich2} and \cite[Ch.~14,Sec.~4,Exer.~2(c)]{Pierce}, however we feel that it is instructive to present a detailed full exposition.

We recall (see \cite[p.~58]{Lbook}) that for $x,y \in F^\times$, the vanishing of $(x) \cup (y)$ is equivalent to the solvability of the Galois embedding problem
$$\xymatrix{1 \ar[r]& \F_p \ar[r]& H_{p^3} \ar[r]& \Z/p \times \Z/p \ar[r]& 1}$$ over $F(\root{p}\of{x},\root{p}\of{y})$.  Hence we will prove that $(a) \cup (1-a) = 0$ by finding an explicit $H_{p^3}$-extension of $F$ which contains $L:=F(\root{p}\of{a},\root{p}\of{1-a})$ as a quotient. We have already said that the criterion for solving this embedding problem over the field $F(\root{p}\of{a},\root{p}\of{1-a})$ is that $1-a \in N_{F(\root{p}\of{a})/F}(F(\root{p}\of{a})^\times)$, and  we have already observed this condition in our first proof of Bass-Tate. In a sense, then, we are done.  It will be profitable for us to carry the explanation out a bit more completely, however, as this will help us find an explicit representation of $(a)\cup (1-a)$ as a coboundary, and because it is intimately connected to the results we present in subsequent sections.  

First, we establish some notation.  We know that for $L=F(\root{p}\of{a},\root{p}\of{1-a})$ we have $\Gal(L/F) \simeq \Z/p \times \Z/p$; we will write $\sigma$ and $\tau$ for generators of this group which are dual to $a$ and $1-a$, respectively (e.g., $\sigma(\root{p}\of{a}) = \xi_p \root{p}\of{a}$, yet $\sigma$ acts trivially on $\root{p}\of{1-a}$). Now consider $\alpha = 1-\root{p}\of{a} \in F(\root{p}\of{a})$.  We have already seen that $$1-a = \prod_{i=0}^{p-1} \left(1-\xi_p^i \root{p}\of{a}\right) = N_{F(\root{p}\of{a})/F}\left(\alpha\right).$$ We define $\beta = (\sigma-1)^{p-2} (\alpha)$ and claim that the desired $H_{p^3}$-extension of $L/F$ is $L(\root{p}\of{\beta})/F$.  

First, observe that the $F$-conjugates of $\root{p}\of{\beta}$ are the $p$th roots of $\beta$ under the action of $\sigma$ and $\tau$.   Since $\beta \in F(\root{p}\of{a})$, we have $\tau\left(\beta\right) = \beta$.  On the other hand, the action of $\sigma$ on $\beta$ is nontrivial; because $1+\sigma+\cdots + \sigma^{p-1} \equiv (\sigma-1)^{p-1} \mod{p}$, we have
\begin{align*}\sigma(\beta) &= \sigma\left((\sigma-1)^{p-2}\alpha\right) = \beta (\sigma-1)^{p-1}\alpha \\&\equiv \beta N_{F(\root{p}\of{a})/F}(\alpha) = \beta (1-a) \mod{F(\root{p}\of{a})^{\times p}}.
\end{align*}
In either case, though, the $p$th roots of $\tau(\beta)$ and $\sigma(\beta)$ are contained in $L(\root{p}\of{\beta})$, and hence $L(\root{p}\of{\beta})$ is Galois.

To prove $\Gal(L(\root{p}\of{\beta})/F) \simeq H_{p^3}$, begin by noting that this extension is degree $p^3$.  Hence all we must do is show that this group is noncommutative and that elements have order at most $p$.  Let $\hat \sigma$ and $\hat \tau$ be lifts of $\sigma$ and $\tau$ to $\Gal(L(\root{p}\of{\beta})/F)$; it will be enough for us to show that $\hat \sigma \hat \tau \neq \hat \tau \hat \sigma$ and that $\hat \sigma^p = \hat \tau^p = \mbox{id}$.  To arrive at these results, we will only need to investigate actions on $\root{p}\of{\beta}$ since we already know how $\hat \tau$ and $\hat \sigma$ act on $\root{p}\of{a}$ and $\root{p}\of{1-a}$.  

We have already seen that $\hat \tau(\beta) = \beta$ and $\hat \sigma(\beta) = \beta (1-a) k^p$ for some $k \in F(\root{p}\of{a})$.  By extracting $p$th roots in these equations, we therefore have
\begin{equation}\label{eq:tau.and.sigma.relations}
\begin{split}
\hat \sigma(\root{p}\of{\beta}) &= \xi_p^x~\root{p}\of{\beta}~\root{p}\of{1-a}~k\\
\hat \tau(\root{p}\of{\beta}) &= \xi_p^y~\root{p}\of{\beta}
\end{split}
\end{equation} 
for some $x,y \in \Z$. With these identities in hand, it is easy to see that $\hat \tau$ and $\hat \sigma$ do not commute:
$$\xi_p \hat \sigma \hat \tau(\root{p}\of{\beta}) =  \hat \tau \hat \sigma(\root{p}\of{\beta}).$$  Furthermore by iteratively applying the second part of equation (\ref{eq:tau.and.sigma.relations}), one recovers $\hat \tau^{p}(\root{p}\of{\beta})=\root{p}\of{\beta}$.  Hence $\hat \tau^p=\mbox{id}$.  On the other hand, 
\begin{align*}
\frac{\hat \sigma^p(\root{p}\of{\beta})}{\root{p}\of{\beta}} &= (\hat \sigma-1)(1+\hat \sigma+\hat \sigma^2 + \cdots + \hat \sigma^{p-1})(\root{p}\of{\beta})\\
&=(\hat \sigma-1) \left(\xi_p^z \root{p}\of{(1+\sigma+\sigma^2 + \cdots + \sigma^{p-1})\beta}\right)\\
&=(\hat \sigma-1)\left(\xi_p^z \root{p}\of{N_{F(\root{p}\of{a})/F}(\beta)}\right)\\
&=(\hat \sigma-1)\left(\xi_p^z \root{p}\of{1}\right)\\
&=1.
\end{align*}In this case the second-to-last equality comes from the fact that $\beta$ is in the image of $\sigma-1$, and the last equality follows because $\xi_p \in F$ by assumption.

\subsection{Exhibiting $(1-a) \cup (a)$ explicitly as a coboundary}


In \cite{Pf}, Pfister gives a very interesting proof of the vanishing of $(a) \cup (1-a)$ in the case when $p=2$.  Pfister's proof uses nothing but basic definitions from Galois cohomology; he shows that $(a) \cup (1-a)=0$ by explicitly exhibiting it as a coboundary.  His proof, however, is unmotivated.  Where does his choice of coboundary come from?  Also, one would like to produce small Galois extensions where one can already define $(a) \cup (1-a)$ and show that it vanishes there.   In our proof below, we explicitly exhibit $(1-a) \cup (a)$ as a coboundary; there is no significant difference between this and $(a) \cup (1-a)$ considering the symmetry between $a$ and $1-a$ and the anti-commutative property of the cup product (see \cite[Prop.~1.4.4]{NSW}).  We feel that the vanishing of $(1-a)\cup(a)$ is in fact a precursor of the $n$-vanishing Massey conjecture for $n \geq 3$. (See \cite{Ef,HW,MT1,MT2,MT3} for related material.)

In this section we provide motivation for Pfister's proof, and in the process we both extend it to the case $p>2$ and show that $(1-a)\cup(a)$ can be defined on a Galois extension of degree $p^3$ on which it vanishes.  Though thematically similar, there are some small distinctions between the proof when $p>2$ and $p=2$; for this reason, we shall focus on the case $p>2$ and only indicate the necessary changes for $p=2$.  So assume now that $p>2$.  

Before diving into the details, we make a short road map for our argument.  Let $\mathbf{p}:F_{\text{\tiny{sep}}}^\times \to F_{\text{\tiny{sep}}}^\times$ be the map $\mathbf{p}(f) = f^p$; the kernel of this map is $\langle \xi_p \rangle \simeq \Z/p$, and we write $\eta$ for the associated embedding of $\Z/p$ into $F_{\text{\tiny{sep}}}^\times$.  The short exact sequence $$\xymatrix{1 \ar[r] & \Z/p \ar[r]^\eta & F_{\tiny{\text{sep}}}^\times \ar[r]^{\mathbf{p}} & F_{\tiny{\text{sep}}}^\times \ar[r] & 1}$$ induces the exact sequence on cohomology
\begin{equation}\label{eq:long.exact.sequence}
\xymatrix{H^1(G_F,F_{\text{\tiny{sep}}}^\times) \ar[r] & H^1(G_F,F_{\text{\tiny{sep}}}^\times) \ar[r]^\delta & H^2(G_F,\Z/p) \ar[r]^{\eta_*} & H^2(G_F,F_{\text{\tiny{sep}}}^\times).}
\end{equation}
Although $H^1(G_F,F_{\text{\tiny{sep}}}^\times) = \{0\}$ by Hilbert 90, it is convenient to keep this group in our picture since we need concrete $1$-cochains of $G_F$ in $F_{\text{\tiny{sep}}}^\times$.  Using the Galois theory of $p^3$-extensions, we produce a $1$-cochain $N \in C^1(G_F,F_{\text{\tiny{sep}}}^\times)$ such that $d^1(N) = \eta_*\left((1-a) \cup (a)\right)$, and hence $\eta_*\left((1-a) \cup (a)\right) = 0$ in $H^2(G_F,F_{\text{\tiny{sep}}}^\times)$.  Because $\eta_*$ is injective, this implies that $(1-a) \cup (a) = (0) \in H^2(G_F,\Z/p)$.  However, in order to produce a specific $2$-boundary for the cochain representing $(1-a) \cup (a)$, we chase sequence (\ref{eq:long.exact.sequence}) and modify $N$ by another $1$-cochain $M$ so that $d^1(M)$ is trivial but $N/M$ takes values in $\Z/p$.

As in the previous section, assume that $a$ and $1-a$ represent classes which are linearly independent in the $\F_p$-vector space $F^\times/F^{\times p}$.  We again write $L = F(\root{p}\of{a},\root{p}\of{1-a})$.  Thus $L/F$ is a Galois extension and $\Gal(L/F) \simeq \Z/p \times \Z/p$.  We fix a specific primitive $p$th root of unity $\xi_p$.  This will give us an isomorphism $\iota: \langle \xi_p \rangle \to \Z/p$; in this way we can identify $\langle \xi_p \rangle$ with $\Z/p$ via $\iota$.  Following the convention from section \ref{sec:vanishing.via.embedding}, we write $\sigma$ and $\tau$ for the generators of $\Gal(L/F)$ that are dual to $\root{p}\of{a}$ and $\root{p}\of{1-a}$, respectively.

Observe that $(1-a) \in H^1(G_F,\Z/p)$ is represented by $\gamma \mapsto \iota\left(\gamma(\root{p}\of{1-a})/\root{p}\of{1-a}\right)$, and similarly $(a) \in H^1(G_F,\Z/p)$ is represented by $\gamma \mapsto 
\iota\left(\gamma(\root{p}\of{a})/\root{p}\of{a}\right)$.  Let $\gamma_1,\gamma_2 \in G_F$ restrict to $\tau^i \sigma^k,\tau^l \sigma^j\in \Gal(L/F)$, respectively;  we have (see \cite[Prop.~1.4.8]{NSW}) that $(1-a) \cup (a) \in Z^2(G_F,\Z/p)$ is the function $$(\gamma_1,\gamma_2) \mapsto \iota\left(\frac{\gamma_1(\root{p}\of{1-a})}{\root{p}\of{1-a}}~\frac{\gamma_2(\root{p}\of{a})}{\root{p}\of{a}}\right) = ij \mod{p}.$$  

We now proceed to produce  a $1$-cochain $h \in C^1(G_F,\Z/p)$ so that $d^1 h(\gamma_1,\gamma_2) = ij \mod{p}$ (where $d^1:C^1(G_F,\Z/p) \to Z^2(G_F,\Z/p)$ is the usual coboundary map).  To begin, set $\alpha = 1-\root{p}\of{a} \in F(\root{p}\of{a})$ and $\beta = \alpha^{p-1} \sigma(\alpha^{p-2}) \dots\sigma^{p-2}(\alpha)$.  (Note that this element $\beta$ isn't identical to the element $\beta$ in section \ref{sec:vanishing.via.embedding}; see also \cite[p.~162]{JLY}.) Note that for the element $x = \root{p}\of{1-a}/\alpha \in L$ we have $\sigma(\beta)/\beta = x^p$ and 
\begin{equation}\label{eq:norm.of.x}x\sigma(x)\cdots \sigma^{p-1}(x) = \frac{\left(\root{p}\of{1-a}\right)^p}{N_{F(\root{p}\of{a})/F}(\alpha)} = \frac{1-a}{1-a} = 1.\end{equation}  We also have $\tau(x) = \xi_p x$ and $\tau(\beta) = \beta$.  

Now let us consider the possible images of $\beta$ under $\Gal(L/F)$.  First, one proves by induction that for any $k \in \{1,\dots,p-1\}$, we have $$\sigma^k(\beta) = \left(\sigma^{k-1}(x)\sigma^{k-2}(x)\cdots \sigma(x) x\right)^p \beta.$$ Then since all elements of $\Gal(L/F)$ can be written in the form $\tau^i \sigma^k$, and since we already know that $\tau$ acts trivially on $x^p$ and $\beta$, this gives all possible images of $\beta$ under $\Gal(L/F)$. 
In particular, this means that if $\gamma \in G_F$ restricts to $\tau^i \sigma^k$, then there exists a unique $c \in \{0,1,\dots,p-1\}$ such that $\gamma\left(\root{p}\of{\beta}\right) = \xi_p^c \root{p}\of{\beta} \prod_{l=0}^{k-1} \sigma^l(x)$.  We use this last equation to define two functions from $G_F$ to $F_{\tiny{\text{sep}}}^\times$:  
\begin{align*}
N(\gamma) = \prod_{l=0}^{k-1} \sigma^l(x) \quad \mbox{ and } \quad
M(\gamma) =\frac{\gamma(\root{p}\of{\beta})}{\root{p}\of{\beta}} = \xi_p^c  \prod_{l=0}^{k-1} \sigma^l(x).\\
\end{align*}
Note that $M \in Z^1(G_F,F_{\tiny{\text{sep}}}^\times)$ since it is conspicuously in $B^1(G_F,F_{\tiny{\text{sep}}}^\times)$.  
Observe that using equation (\ref{eq:norm.of.x}), one can show that the function $N \in C^1(G_F,F_{\tiny{\text{sep}}}^\times)$ is well-defined.
Finally, observe that since $$\beta \equiv (\sigma-1)^{p-2}(\alpha) \in F(\root{p}\of{a})^\times/F(\root{p}\of{a})^{\times p},$$  section \ref{sec:vanishing.via.embedding} tells us that $L(\root{p}\of{\beta})/F$ is Galois with $\Gal(L(\root{p}\of{\beta})/F) \simeq H_{p^3}$.  Hence $M$ and $N$ are defined on the $H_{p^3}$-extension $L(\root{p}\of{\beta})/F$.

As a consequence, we have a $1$-cochain $h = \iota_*\left(N/M\right) \in C^1(\Gal(L(\root{p}\of{\beta})/F),\Z/p)$.  We show that --- as cocycles --- we have $\text{inf}\left(d^1 h\right) = (1-a) \cup (a)$.  To do this, let us assume that $\gamma_1$ and $\gamma_2$ are elements of $G_F$ with restrictions in $L/F$ of the form $\tau^i \sigma^k$ and $\tau^l\sigma^j$; our goal will be to show $\text{inf}\left(d^1 h\right)(\gamma_1,\gamma_2) = ij \mod{p}$.  Since $M \in B^1(G_F,F_{\tiny{\text{sep}}})$ we know $d^1 M$ is trivial.  Therefore we have
\begin{align*}
\text{inf}\left(d^1 \frac{N}{M}\right)(\gamma_1,\gamma_2) =  \iota\left(\frac{N(\gamma_2)^{\gamma_1}N(\gamma_1)}{N(\gamma_1\gamma_2)}\frac{M(\gamma_1\gamma_2)}{M(\gamma_2)^{\gamma_1}M(\gamma_1)}\right) = \iota\left(\frac{N(\gamma_2)^{\gamma_1}N(\gamma_1)}{N(\gamma_1\gamma_2)}\right).
\end{align*}  By definition we have 
\begin{align*}
N(\gamma_1) = \prod_{l=0}^{k-1} \sigma^l(x), \quad N(\gamma_2) = \prod_{l=0}^{j-1}\sigma^{l}(x), \quad \mbox{ and } \quad N(\gamma_1\gamma_2) & = \prod_{l=0}^{j+k-1}\sigma^{l}(x).
\end{align*} Using the relation $\tau(x) = \xi_px$ and the fact that $\sigma$ and $\tau$ commute in $\Gal(L/F)$, we recover
\begin{align*}
\frac{N(\gamma_2)^{\gamma_1} N(\gamma_1)}{N(\gamma_1\gamma_2)} &= 
\left(\prod_{l=0}^{j-1} \sigma^l(x)\right)^{\tau^i \sigma^k}\left(\prod_{l=0}^{k-1}\sigma^l(x)\right)\left(\prod_{l=0}^{j+k-1}\sigma^l(x)\right)^{-1} 
\\&= \left( \prod_{l=0}^{j-1} \sigma^l(x^{\tau^i})\right)^{\sigma^k}\left(\prod_{l=0}^{i-1}\sigma^l(x)\right)\left(\prod_{l=0}^{j+k-1}\sigma^l(x)\right)^{-1} = \xi_{p}^{ij}.
\end{align*}
This gives the desired result.  

The case $p=2$ is nearly identical with the case $p>2$, except that in this case one should consider also the case when $1-a$ and $a$ belong to the same class in $F^\times/F^{\times 2}$.  In this case $L = F(\root\of{a},\root\of{1-a}) = F(\root\of{a})$ is a quadratic extension and $L(\root\of{1-\root\of{a}})/F$  is cyclic of degree $4$.  When $1-a$ and $a$ are independent modulo $F^{\times 2}$, the extension $L(\root\of{\alpha})/F$ is a dihedral extension of degree $8$.  It is now straightforward to check that the construction above provides a $1$-cochain $h \in C^1(G_F,\F_2)$ such that $d^1h = (a) \cup (1-a)$ (with $(a)$ and $(1-a)$ viewed as $1$-cochains and $(a) \cup (1-a)$ viewed as an element of $Z^2(G_F,\F_2)$).  Moreover we recover in this way the cochain $h$ introduced by Pfister in \cite[p.~275]{Pf}.

\section{Recasting the embedding problem}\label{sec:recasting}

We continue to assume that $p$ is a prime number, but for the rest of the paper we assume that $p>2$.  We assume that $F$ is a field and $K/F$ an extension with Galois group isomorphic to $\Z/p^n$.  We no longer make assumptions on the characteristic of $F$ or on the roots of unity it contains.

Traditionally, theorems concerning the realizability of $H_{p^3}$ and $M_{p^3}$ as Galois groups are studied from the perspective of embedding problems that arise from group extensions of $\Z/p \times \Z/p$ by $\Z/p$.  In this second part of our paper, we revisit the realizability of these groups as Galois groups by instead studying them as a special case of the family of embedding problems 
\begin{equation*}
\xymatrix{1 \ar[r]& \left(\Z/p\right)^{\oplus \ell} \ar[r]& G \ar[r] & \Z/p^n \ar[r] & 1}.
\end{equation*}  (Both $H_{p^3}$ and $M_{p^3}$ occur as short exact sequences for $\ell=2$ and $n=1$.)  By studying appropriate $\F_p$-vector spaces as modules over Galois groups, we are able to associate the realizations of these groups as Galois groups to the appearance of modules of a certain type within the classical parameterizing spaces of elementary $p$-abelian extensions; by using the recently computed module structures for these spaces, we can revisit the known results concerning these groups from this module-theoretic perspective, and furthermore exhibit them as part of a broader phenomena.  

We begin by establishing some notation and reminding the reader of some module-theoretic machinery.  We write $G_n$ for the group $\Z/p^n$, and we use $\sigma$ to denote a generator of this group (the particular $n$ corresponding to $\sigma$ will be clear from the context).  

$\F_p[G_n]$ is a local ring with maximal ideal $\langle \sigma-1 \rangle$.  We define a homomorphism $\psi:\F_p[t] \to \F_p[G_n]$ by $\psi(t) = \sigma-1$.  Now $\psi$ is a surjective map, and $t^{p^n} \in \ker(\psi)$ because $$1 = \sigma^{p^n} = \left((\sigma-1)+1\right)^{p^n} = (\sigma-1)^{p^n}+1.$$  Counting dimensions over $\F_p$, we conclude that $\F_p[t]/\langle t^{p^n}\rangle \simeq \F_p[G_n]$.  Hence we define a ``valuation-like" map $v:\F_p[G_n] \to \mathcal{L}_n$, where $\mathcal{L}_n = \{0,1,\cdots,p^n-1\} \cup \{\infty\}$ is a set endowed with a binary operation $*$ defined by $$i*j = \left\{\begin{array}{ll}i+j,&\mbox{ if }i,j \neq \infty \mbox{ and }i+j \leq p^n-1\\\infty, &\mbox{ if }i=\infty \mbox{ or }j=\infty \mbox{ or }i+j > p^n-1.\end{array}\right.$$  For nonzero $f \in \F_p[G_n]$, we will write $v(f)$ for the maximum value $i \in \mathcal{L}_n \setminus \{\infty\}$ satisfying $f \in \langle (\sigma-1)^i \rangle$, and we set $v(0) = \infty$.  Then we have $v(fg) = v(f)*v(g)$, and $v(f+g) \geq \min\{v(f),v(g)\}$ (following the usual convention that $\infty>i$ for all $i \in \mathcal{L}_n\setminus\{\infty\}$).  If $M$ is an $\F_p[G_n]$-module (written additively) and $\gamma \in M$, then the smallest positive integer $\ell$ such that $(\sigma-1)^{\ell}\gamma = 0$ coincides with the $\F_p$-dimension of $\langle \gamma \rangle$; we write $\ell(\gamma)$ for this quantity.  

If $\gamma_1,\gamma_2 \in M$ and $\mu = \max\{\ell(\gamma_1),\ell(\gamma_2)\}$ we see that $$(\sigma-1)^{\mu} (\gamma_1 + \gamma_2) = (\sigma-1)^\mu\gamma_1 + (\sigma-1)^\mu \gamma_2 = 0.$$  Moreover, if $\ell(\gamma_1)<\ell(\gamma_2)$, then for any $\ell(\gamma_1) \leq v < \ell(\gamma_2)$ we have $$(\sigma-1)^v(\gamma_1+\gamma_2)= (\sigma-1)^v\gamma_1 +(\sigma-1)^v\gamma_2 = (\sigma-1)^v\gamma_2 \neq 0.$$  Hence we see that $\ell(\gamma_1+\gamma_2) \leq \max\{\ell(\gamma_1),\ell(\gamma_2)\}$, with equality if $\ell(\gamma_1) \neq \ell(\gamma_2)$.  In what follows, we will refer to this as the ultrametric property.

For $1 \leq \ell \leq p^n$, we define the $\F_p[G_n]$-module $A_\ell$ as $\F_p[G_n]/\langle(\sigma-1)^\ell\rangle$; the action of $\sigma$ on $A_\ell$ is simply multiplication by $\sigma$.  Each of modules from $\{A_{\ell}\}_{\ell=1}^{p^n}$  is indecomposable, and any indecomposable $\F_p[G_n]$-module is isomorphic to some $A_\ell$.  For any $\F_p[G_n]$-module $A$ there is a unique (unordered) collection of positive integers $\{\ell_i\}_{i \in \mathcal{I}}$ (where possibly $\ell_i = \ell_j$ for $i \neq j$) so that $A \simeq \bigoplus_{i\in\mathcal{I}} A_{\ell_i}$.  (For more details, see \cite[Sec.~1.1]{MSS1}, \cite[Sec.~2.3]{LMSSembed}, or \cite{AnFu}.)  

\begin{remark*}
The $A_\ell$'s are exactly the indecomposable representations of the cyclic group $G_n$ over the field $\F_p$.
\end{remark*}

\begin{remark*}
By a slight abuse of notation, we denote elements of $A_\ell$ as elements of $\F_p[G_n]$.  In these cases, it should be understood that we intend the given element modulo the ideal of $\F_p[G_n]$ generated by $(\sigma-1)^\ell$.
\end{remark*}

\subsection{Parameterizing spaces of elementary $p$-abelian extensions}

When $K$ (and therefore $F$) contains a primitive $p$th root of unity $\xi_p$,  Kummer theory tells us that elementary $p$-abelian extensions of $K$ correspond to $\F_p$-subspaces of $K^\times/K^{\times p}$; these extensions are additionally Galois over $F$ if and only if they are modules over the group ring $\F_p[G_n]$.  In this case, we define $J(K) = K^\times/K^{\times p}$.  

When $K$ has characteristic $p$, Artin-Schreier theory gives a correspondence between elementary $p$-abelian extensions of $K$ and $\F_p$-subspaces of $K/\wp(K)$, where $\wp(K) = \{k^p - k: k \in K\}$.  Again, such an extension is Galois over $F$ if and only if the corresponding $\F_p$-space is a module over $\F_p[G_n]$.  In this case we define $J(K) = K/\wp(K)$.

Finally, if $K$ has characteristic different from $p$ but $\xi_p \not\in K$ (and therefore $\xi_p \not\in F$), then elementary $p$-abelian extensions of $K$ correspond to $\F_p$-subspaces of a particular eigenspace of $K(\xi_p)^\times/K(\xi_p)^{\times p}$.  Specifically, if $\tau$ is a generator for $\Gal(K(\xi_p)/K)$ and $\tau(\xi_p) = \xi_p^t$, then the space parametrizing elementary $p$-abelian extensions of $K$ is the subspace on which $\tau$ acts as exponentiation by $t$ (the ``$t$-eigenmodule").  As before, such an extension is Galois over $F$ if and only if the corresponding $\F_p$-space is a module over the group ring $\F_p[G_n]$ (where here we identify $\Gal(K/F)$ and $\Gal(K(\xi_p)/F(\xi_p))$).  In this case, we define $J(K)$ as the $t$-eigenmodule of $K(\xi_p)^\times/K(\xi_p)^{\times p}$.  (These parameterizing spaces are also reviewed in \cite{Wat}.)

It is worth noting that in this latter case, one can describe a morphism $\mathcal{T}$  which projects subspaces of $K(\xi_p)^\times/K(\xi_p)^{\times p}$ to $J(K)$.  Let $s=[K(\xi_p):K]$, and note that $1<s \leq p-1$.  Choose $z \in \Z$ so that $zst^{s-1} \equiv 1 \mod{p}$, and --- again using the notation $\tau(\xi_p) = \xi_p^t$ --- set \begin{equation}\label{eq:projector.for.descent}\mathcal{T} = z \sum_{i=1}^s t^{s-i} \tau^{i-1} \in \Z[\langle \tau \rangle].\end{equation}  Notice that $(t-\tau)\mathcal{T} \equiv 0 \mod{p}$, and so the image of $\mathcal{T}$ is contained in the $t$-eigenspace for $\tau$.  Conversely, if an element is in the $t$-eigenspace for $\tau$, then $\mathcal{T}$ acts as the identity.  Hence we have $\mathcal{T}$ projects $\F_p$-subspaces of $K(\xi_p)^\times/K(\xi_p)^{\times p}$ onto $J(K)$.  (For more details, see \cite[Sec.~4]{MSauto} or the proof of \cite[Thm.~2]{MSSauto}.)

Before moving on, we make a brief comment on notation.  Our goal is to prove statements about certain embedding problems within the uniform framework provided by $J(K)$.  For this reason, whenever we discuss $J(K)$ we will assume it has an underlying additive structure (and therefore $\Gal(K/F)$ acts multiplicatively).  


\subsection{Module structures and Galois groups}

Because $\Gal(K/F)$ induces an action on $J(K)$, it is natural to consider what this additional structure tells us about elementary $p$-abelian extensions of $K$.   The first answer to this question was given by Waterhouse in \cite{Wat}, where he considered cyclic submodules of $J(K)$ when $\textrm{char}(K) \neq p$.  The question was answered for non-cylic modules, as well as in the case $\textrm{char}(K) = p$, by the authors in \cite{MSSauto,Schultz2}; there the $\F_p[G_n]$-module $J(K)$ is exhibited as a parameterizing space for solutions to embedding problems over $K/F$ which arise from extensions of $G_n$ by elementary $p$-abelian groups.   Since we are focusing on groups related to $H_{p^3}$ and $M_{p^3}$ in this paper, it will be sufficient for us to focus our attention on cyclic $\F_p[G_n]$-submodules. 

So let $L/K$ be an elementary $p$-abelian extension which corresponds to a cyclic $\F_p[G_n]$-submodule $\langle \gamma \rangle \subseteq J(K)$.  Then $L/F$ is Galois and $\Gal(L/F)$ is an extension of $\Gal(K/F) \simeq G_n$ by $\Gal(L/K)$.  Using the appropriate parameterizing theory (e.g., Kummer theory if $\textrm{char}(K) \neq p$ and $\xi_p \in K$), one can show that there is an equivariant pairing $\Gal(L/K) \times \langle \gamma \rangle \to \F_p$, and hence $\Gal(L/K)$ is dual to $\langle \gamma \rangle$.  One can show that $\F_p[G_n]$-modules are self-dual, and so we conclude that $\Gal(L/F)$ is an extension of $\Gal(K/F) \simeq G_n$ by $\langle \gamma \rangle$.

With this in mind, we now describe the possible extensions of $G_n$ by a cyclic $\F_p[G_n]$-module.

\begin{proposition}[{\cite[Thm.~2]{Wat}}]
There is only one group extension of $G_n$ by $A_{p^n}$, namely the semi-direct product $A_{p^n}\rtimes G_n$.  For $1 \leq i < p^n$ there are two possible group extensions of $G_n$ by $A_i$.  One of them is the semi-direct product $A_i \rtimes G_n$, where we have $$(f_1,\sigma^{j_1})(f_2,\sigma^{j_2}) = (f_1 + \sigma^{j_1}f_2,\sigma^{j_1+j_2}).$$

The second extension of $G_n$ by $A_i$ will be written $A_i \bullet G_n$; the elements of this group again come from $A_i \times G_n$, but the operation is given by 
$$(f_1,\sigma^{j_1})(f_2,\sigma^{j_2}) = \left\{\begin{array}{ll}(f_1+\sigma^{j_1}f_2,\sigma^{j_1+j_2})&,\mbox{ if }j_1+j_2<p^n\\(f_1+\sigma^{j_1}f_2 + (\sigma-1)^{i-1},\sigma^{j_1+j_2})&,\mbox{ if }j_1+j_2 \geq p^n.\end{array}\right.$$ (Here the numbers $j_1$ and $j_2$ are taken from $\{0,\cdots,p^n-1\}$.)
\end{proposition}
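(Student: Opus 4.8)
The plan is to compute the group cohomology $H^2(G_n, A_i)$ with coefficients in the $\F_p[G_n]$-module $A_i$, and then exhibit explicit $2$-cocycles representing the classes we find. Since $G_n = \Z/p^n$ is cyclic, I would use the standard periodic free resolution of $\Z$ over $\Z[G_n]$, namely the complex $\cdots \to \Z[G_n] \xrightarrow{\sigma-1} \Z[G_n] \xrightarrow{N} \Z[G_n] \xrightarrow{\sigma-1} \Z[G_n] \to \Z$, where $N = 1 + \sigma + \cdots + \sigma^{p^n-1}$ is the norm element. Applying $\operatorname{Hom}_{\Z[G_n]}(-, A_i)$ gives that $H^2(G_n, A_i) \simeq A_i^{G_n} / N A_i$, where $A_i^{G_n}$ is the submodule of $\sigma$-fixed points. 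The key computation is then linear-algebraic over $\F_p[G_n] \simeq \F_p[t]/\langle t^{p^n}\rangle$ with $t = \sigma - 1$: the fixed points $A_i^{G_n}$ are the elements killed by $\sigma - 1 = t$, i.e. the socle $\langle (\sigma-1)^{i-1}\rangle \simeq \F_p$; and the norm element acts on $A_i$ as $N \equiv (\sigma-1)^{p^n-1} = t^{p^n-1} \pmod p$ (this follows exactly as in the excerpt's identity $1 + \sigma + \cdots + \sigma^{p-1} \equiv (\sigma-1)^{p-1}$, applied with $p^n$ in place of $p$). Hence $NA_i = t^{p^n-1} A_i$, which is $0$ when $i < p^n$ and equals the socle when $i = p^n$. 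Therefore $H^2(G_n, A_i) = 0$ when $i = p^n$, giving a unique extension (the split one), and $H^2(G_n, A_i) \simeq \F_p$ when $1 \le i < p^n$, giving exactly two extension classes: the trivial one and one nontrivial one.

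Having established that there are exactly two extensions for $1 \le i < p^n$, it remains to check that the two operations written in the statement are (i) actually group operations corresponding to extensions of $G_n$ by $A_i$, and (ii) inequivalent. For (i), the semidirect product $A_i \rtimes G_n$ is standard. For $A_i \bullet G_n$, I would verify associativity directly from the given formula --- this is the one genuinely mechanical step, checking the cocycle condition for the factor set $f(\sigma^{j_1}, \sigma^{j_2}) = (\sigma-1)^{i-1}$ if $j_1 + j_2 \ge p^n$ and $0$ otherwise, where $j_1, j_2 \in \{0,\dots,p^n-1\}$. One sees that this factor set is precisely the pushforward of a generator of $H^2(G_n, \Z)$ (the class classifying $\Z \to \Z \to G_n$ via $N$) under the map $\Z \to A_i$ sending $1 \mapsto (\sigma-1)^{i-1}$, which lands in the socle $A_i^{G_n}$ and represents the nonzero class of $H^2(G_n, A_i) \simeq A_i^{G_n}/NA_i$ computed above. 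For (ii), the extensions are inequivalent because their cohomology classes are: in $A_i \rtimes G_n$ the element $(0,\sigma)$ has order $p^n$, whereas in $A_i \bullet G_n$ one computes $(0,\sigma)^{p^n} = ((\sigma-1)^{i-1}, \mathrm{id}) \neq (0,\mathrm{id})$, so $(0,\sigma)$ has order $p^{n+1}$; hence the two groups are not even isomorphic in a way respecting the extension structure.

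The main obstacle I anticipate is purely bookkeeping rather than conceptual: correctly identifying the norm element's action on $A_i$ modulo $p$ and tracking the socle, together with the careful associativity check for $A_i \bullet G_n$ where the case split at $j_1 + j_2 \ge p^n$ interacts with a second case split when one multiplies three elements (one must confirm the $(\sigma-1)^{i-1}$ correction terms accumulate consistently regardless of how the triple product is parenthesized). Since the underlying abelian group in each case is literally $A_i \times G_n$ as a set, there is no subtlety about the group-theoretic extension being nonabelian or about lifting generators --- everything reduces to the cohomology computation over the principal ideal ring $\F_p[t]/\langle t^{p^n}\rangle$, which is elementary. I would also remark that this recovers Waterhouse's Theorem 2 (cited in the statement), so strictly one could simply invoke \cite[Thm.~2]{Wat}; I include the argument for completeness and because the explicit cocycle for $A_i \bullet G_n$ is used in later sections.
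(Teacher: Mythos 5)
The paper gives no proof of this proposition at all: it is quoted verbatim from Waterhouse (\cite[Thm.~2]{Wat}), so your self-contained cohomological argument is a legitimately different route rather than a variant of anything in the text. Your core computation is correct: for the cyclic group $G_n$ one has $H^2(G_n,A_i)\simeq A_i^{G_n}/NA_i$, the norm element satisfies $N=(\sigma-1)^{p^n-1}$ in $\F_p[G_n]$ (your binomial-coefficient justification is sound), the fixed points form the one-dimensional socle $\langle(\sigma-1)^{i-1}\rangle$, and hence $H^2=0$ for $i=p^n$ while $H^2\simeq\F_p$ for $i<p^n$. The verification that the displayed operation on $A_i\bullet G_n$ is the ``carrying'' factor set with value $(\sigma-1)^{i-1}$ in the socle, and that this class is nonzero in $A_i^{G_n}/NA_i$ (equivalently, that $(0,\sigma)$ has order $p^{n+1}$ there versus $p^n$ in the semidirect product), is also correct.

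The one genuine gap is the step ``$H^2(G_n,A_i)\simeq\F_p$, giving exactly two extension classes: the trivial one and one nontrivial one.'' A group of order $p$ has $p$ elements, so there are $p$ equivalence classes of extensions, of which $p-1$ are nontrivial; since this section of the paper assumes $p>2$, you cannot pass directly from $|H^2|=p$ to the count of two in the proposition. The missing observation is that the $p-1$ nonzero classes all yield isomorphic extensions: multiplication by $c\in\F_p^{\times}$ is an $\F_p[G_n]$-module automorphism of $A_i$ carrying the extension with class $[f]$ isomorphically (over the identity of $G_n$) onto the one with class $[cf]$, so $\operatorname{Aut}_{\F_p[G_n]}(A_i)$ acts transitively on the nonzero classes. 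The proposition counts extensions up to this weaker notion of isomorphism, which is how one gets exactly two; with that sentence added your argument is complete.
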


\begin{example}
Let $\sigma,\tau \in H_{p^3}$ be nontrivial elements which generate $H_{p^3}$.  Then we have $[\sigma,\tau] = \sigma\tau\sigma^{-1}\tau^{-1}$ is an element of order $p$ which generates $Z(H_{p^3})$.  Now consider the $\F_p[\langle \sigma \rangle]$-module $B$ which is the $\F_p$-span of $\{\tau,[\sigma,\tau]\}$ (where the action of $\sigma$ is by conjugation).  The computation $(\sigma-1)\cdot \tau = \sigma\tau\sigma^{-1}\tau^{-1} = [\sigma,\tau]$ shows that $\langle \tau \rangle = B$ as an $\F_p[\langle \sigma \rangle]$-module, and so $B \simeq A_2$.  Because $\langle \sigma \rangle \cap B =\{1\}$ and $\langle \sigma \rangle \simeq G_1$, we conclude that $H_{p^3} \simeq A_2 \rtimes G_1$.  Phrased in a slightly different language, this example tells us that if $N$ is any subgroup of $H_{p^3}$ with $|N| = p^2$ and $Q:=H_{p^3}/N$, then $N \simeq A_2$ as an $\F_p[Q]$-module and $H_{p^3} \simeq N \rtimes Q$.

Now recall that $M_{p^3} = \left\langle y,x~ |~ y^{p^2} = x^{p} = 1, [x,y] = y^p\right\rangle = \langle y \rangle \rtimes \langle x \rangle$.  The subgroups $\langle y^k x \rangle$ for $k \in \{1, \cdots, p-1\}$, together with the subgroup $\langle y \rangle$, provide $p$ distinct subgroups isomorphic to $\Z/p^2$.  On the other hand, we claim that the subgroup $N=\langle y^p , x \rangle$ can be the only subgroup of $M_{p^3}$ isomorphic to $\Z/p \times \Z/p$. To see this, note that the $p$ subgroups isomorphic to $\Z/p^2$ provide us with $p\cdot\phi(p^2) = p^3-p^2$ elements of order $p^2$, and so the number of elements of order less than $p^2$ within $M_{p^3}$ is at most $p^2$.  Since $N$ already contains $p^2$ elements of this type, it can be the only subgroup isomorphic to $\Z/p \times \Z/p$.  On the other hand, a direct computation shows that $[y^p,x]=1$, so that $N \simeq \Z/p \times \Z/p$.

Now let $Q = M_{p^3}/N  = \langle yN \rangle \simeq G_1$.  We have that $N$ is generated by $x$ under the action of $yN$, and hence $N \simeq A_2$ as an $\F_p[G_1]$-module. In this case, however, one has $M_{p^3} \simeq A_2 \bullet G_1$ (one can verify this directly, or simply note that $A_2 \rtimes G_1$ has no elements of order $p^2$).
\end{example}

\begin{example}
Let $n\geq 3$, and consider the group $A_2 \bullet G_{n-2}$.  Using the notation from the previous proposition, we have
\begin{align*}
(0,\sigma)^{p^{n-2}} &= (\sigma-1,1)\\
(0,\sigma)^{p^{n-1}} &= \left((0,\sigma)^{p^{n-2}}\right)^p = (\sigma-1,1)^p = (0,1).
\end{align*}
Hence $A_2 \bullet G_{n-2}$ is a nonabelian $p$-group with order $p^n$ that contains an element of order $p^{n-1}$.  Since we've already observed that there is only one such $p$-group --- which we previously called $M_{p^n}$ --- we must have $M_{p^n} \simeq A_2 \bullet G_{n-2}$.
\end{example}

If $L/K$ corresponds to $\langle \gamma \rangle$, all that is left to do is determine which extension of $G_n$ by $\langle \gamma \rangle$ corresponds to $\Gal(L/F)$.  To do this, one uses the so-called index function.  The index is a function $e:J(K) \cap \ker\left((\sigma-1)^{p^n-1}\right) \to \F_p$ defined by 
$$e(\gamma) = \left\{\begin{array}{ll}\root{p}\of{N_{\hat K/\hat F}(\gamma)}^{\sigma-1},&\mbox{ if }\textrm{char}(K) \neq p\\(\sigma-1)\rho(Tr_{K/F}(\gamma)),&\mbox{ if }\textrm{char}(K) = p.\end{array}\right.$$ (In the first case we have identified $\mu_p$ with $\F_p$ by selecting a particular root of unity $\xi_p$ to act as a generator of $\mu_p$.)  With the index in hand, we have the following

\begin{proposition}[\cite{Schultz2}, Thm.~4.4]\label{prop:modules.as.galois.groups}
Suppose $\Gal(K/F) \simeq G_n$.  Solutions to the embedding problem $A_{p^n} \rtimes G_n \twoheadrightarrow G_n$ are in correspondence with submodules $\langle \gamma \rangle \subseteq J(K)$ such that $\ell(\gamma) = p^n$.  For $i<p^n$, solutions to the embedding problem $A_i \rtimes G_n \twoheadrightarrow G_n$ over $K/F$ are in correspondence with the submodules $\langle \gamma \rangle \subseteq J(K)$ such that $\ell(\gamma) = i$ and $e(\gamma) = 0$; solutions to the embedding problem $A_i \bullet G_n \twoheadrightarrow G_n$ over $K/F$ are in correspondence with the submodules $\langle \gamma \rangle \subseteq J(K)$ such that $\ell(\gamma) = i$ and $e(\gamma) \neq 0$.
\end{proposition}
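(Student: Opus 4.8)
The plan is to combine two ingredients already available to us: the classification (in the Proposition of Waterhouse quoted just above) of group extensions of $G_n$ by a cyclic $\F_p[G_n]$-module, and the equivariant duality between $\Gal(L/K)$ and its parameterizing submodule of $J(K)$ recalled in the paragraphs preceding the statement. By that discussion, a cyclic submodule $\langle\gamma\rangle\subseteq J(K)$ with $\ell(\gamma)=i$ produces an elementary $p$-abelian $L/K$ for which $L/F$ is Galois and $\Gal(L/F)$ is an extension of $G_n$ by $\langle\gamma\rangle\simeq A_i$; Waterhouse's result says such an extension is forced to be $A_{p^n}\rtimes G_n$ when $i=p^n$, and is one of the two groups $A_i\rtimes G_n$, $A_i\bullet G_n$ when $i<p^n$. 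So two things remain to prove: first, that conversely \emph{every} solution of one of these embedding problems over $K/F$ arises from a cyclic submodule of the appropriate length, so that the correspondence is a bijection; and second, that when $i<p^n$ the index $e(\gamma)$ detects which of the two extensions $\Gal(L/F)$ is.

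For the converse, start from a solution $L/F$ of $G\twoheadrightarrow G_n$ with $G$ one of the three groups. The kernel $\Gal(L/K)$ is elementary $p$-abelian, so $L/K$ is an elementary $p$-abelian extension which is Galois over $F$; hence it corresponds to a $\Gal(K/F)$-stable $\F_p$-subspace $W\subseteq J(K)$, i.e.\ an $\F_p[G_n]$-submodule. Reading off Waterhouse's explicit multiplication formulas, the conjugation action of a lift of $\sigma$ on $\Gal(L/K)$ coincides with multiplication by $\sigma$, so $\Gal(L/K)\simeq A_i$ (or $A_{p^n}$) as an $\F_p[G_n]$-module. Combining the equivariant Kummer (resp.\ Artin--Schreier, resp.\ descent) pairing $W\simeq\Gal(L/K)^*$ with the self-duality of $\F_p[G_n]$-modules gives $W\simeq A_i$, so $W=\langle\gamma\rangle$ is cyclic with $\ell(\gamma)=i$. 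Since Kummer (resp.\ Artin--Schreier, resp.\ descent) theory is a bijection between submodules of $J(K)$ and elementary $p$-abelian extensions of $K$, distinct $\langle\gamma\rangle$ give distinct $L$, so the forward and backward assignments are mutually inverse.

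For the index dichotomy, fix $i<p^n$, let $\langle\gamma\rangle$ have $\ell(\gamma)=i$, and let $\hat\sigma\in\Gal(L/F)$ be any lift of $\sigma$. From the presentations one computes $(f,\sigma)^{p^n}=\big((\sigma-1)^{i-1},1\big)$ in $A_i\bullet G_n$ and $(f,\sigma)^{p^n}=(0,1)$ in $A_i\rtimes G_n$ (using $(1+\sigma+\cdots+\sigma^{p^n-1})=(\sigma-1)^{p^n-1}=0$ in $A_i$), so $(\sigma-1)^{i-1}\neq 0$ shows $\Gal(L/F)\simeq A_i\bullet G_n$ precisely when $\hat\sigma^{p^n}\neq\mathrm{id}$, independently of the chosen lift. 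So it suffices to detect $\hat\sigma^{p^n}$ from $\gamma$. In the case $\mathrm{char}(K)\neq p$, $\xi_p\in K$: pick $g\in K^\times$ representing $\gamma$, a $p$-th root $y$ of $g$ in $L$, and set $P=\prod_{k=0}^{p^n-1}\hat\sigma^k(y)$. Then $P^p=N_{K/F}(g)$, which lies in $K^{\times p}$ because $\ell(\gamma)<p^n$ forces $(1+\sigma+\cdots+\sigma^{p^n-1})\gamma=(\sigma-1)^{p^n-1}\gamma=0$ in $J(K)$; hence $P\in K^\times$ and is a valid choice of $\root{p}\of{N_{K/F}(\gamma)}$. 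A short manipulation gives $\hat\sigma(P)/P=\hat\sigma^{p^n}(y)/y$, so $\hat\sigma^{p^n}(y)/y=\sigma(P)/P=P^{\sigma-1}=e(\gamma)$; since $\hat\sigma^{p^n}$ commutes with $\hat\sigma$ and the elements $y,\hat\sigma(y),\dots,\hat\sigma^{i-1}(y)$ generate $L$, it acts on each of them by the same scalar $e(\gamma)$, whence $\hat\sigma^{p^n}=\mathrm{id}$ iff $e(\gamma)=0$. This is exactly the claimed alternative. The case $\mathrm{char}(K)=p$ is the same computation with $p$-th roots replaced by $\wp$-preimages, products by sums, and the norm by the trace, recovering $e(\gamma)=(\sigma-1)\rho(\mathrm{Tr}_{K/F}(\gamma))$; and the case $\mathrm{char}(K)\neq p$, $\xi_p\notin K$ reduces to the first case over $K(\xi_p)/F(\xi_p)$, using that $\Gal(K/F)\simeq\Gal(K(\xi_p)/F(\xi_p))$ and $\Gal(L/F)\simeq\Gal(L(\xi_p)/F(\xi_p))$ (the degrees $[L:F]$ and $[F(\xi_p):F]$ being coprime) and that $J(K)$ is the $t$-eigenmodule inside $J(K(\xi_p))$, so $e$ computed there with $\hat K=K(\xi_p)$ transfers.

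The main obstacle, as always with these index arguments, is making the last step watertight: one must verify that $e$ is well-defined on $J(K)\cap\ker\big((\sigma-1)^{p^n-1}\big)$ --- independence of the representative $g$ (changing $g$ by a $p$-th power alters $N_{K/F}(g)$ by an element of $F^{\times p}$, killed by $\sigma-1$), of the choice of $p$-th root of $N_{K/F}(\gamma)$ (competing roots differ by an element of $\mu_p\subseteq F$, again killed by $\sigma-1$), and of the auxiliary data in the descent in the $\xi_p\notin K$ case --- and that $P$ genuinely lies in $K$ and represents the root appearing in the definition of $e$, and that $\hat\sigma^{p^n}$ acts on \emph{all} of $L$ by a single scalar (so that its triviality is equivalent to the vanishing of that scalar, not merely to its fixing $y$). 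The domain restriction to $\ker\big((\sigma-1)^{p^n-1}\big)$ is precisely what guarantees $N_{K/F}(g)\in K^{\times p}$, which is why the index is undefined --- and unneeded, by Waterhouse --- when $\ell(\gamma)=p^n$. Once this bookkeeping is in place, the statement follows by combining the bijection of the second paragraph with the dichotomy of the third.
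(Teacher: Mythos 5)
The paper does not actually prove this proposition --- it is imported wholesale from \cite{Schultz2}, Thm.~4.4 --- so there is no internal argument to measure yours against; I can only assess the reconstruction on its own terms. It is essentially correct, and it assembles exactly the ingredients the paper arranges around the statement: Waterhouse's classification of extensions of $G_n$ by a cyclic $\F_p[G_n]$-module, the equivariant duality $\Gal(L/K)\simeq\langle\gamma\rangle^{*}$ combined with self-duality of $\F_p[G_n]$-modules, and detection of the extension class by the order of a lift of $\sigma$. Your computations check out: $(f,\sigma)^{p^n}=\left((\sigma-1)^{i-1},1\right)$ in $A_i\bullet G_n$ versus $(0,1)$ in $A_i\rtimes G_n$ (both independent of $f$, so the dichotomy is lift-independent), and $\hat\sigma^{p^n}(y)/y=P^{\sigma-1}$ with $P=\prod_{k}\hat\sigma^k(y)$ a legitimate choice of $p$th root of $N_{K/F}(\gamma)$, where the hypothesis $\ell(\gamma)<p^n$ is precisely what forces $N_{K/F}(g)\in K^{\times p}$ and hence $P\in K^\times$. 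One item missing from your bookkeeping list: for the correspondence to be with \emph{submodules} rather than with chosen generators, you must also verify that the vanishing of $e(\gamma)$ is independent of the generator of $\langle\gamma\rangle$. This follows because $e$ is $\F_p$-linear and kills $(\sigma-1)J(K)$ (since $N_{K/F}\circ\sigma=N_{K/F}$), so for any unit $u=c+(\sigma-1)w$ of $\F_p[G_n]$ one gets $e(u\gamma)=c\,e(\gamma)$ with $c\in\F_p^\times$. With that added, the outline is complete modulo the descent formalities you already flag in the $\xi_p\notin K$ case.
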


\begin{example}\label{ex:hp3.and.mp3.in.module.language}
Suppose that $L/F$ has $\Gal(L/F) \simeq H_{p^3}$, and let $K/F$ be any $\Z/p$-subextension.  Then $L/K$ is Galois with $\Gal(L/K) \simeq \Z/p \times \Z/p$, and hence $L$ corresponds to some submodule $M \subseteq J(K)$.  Since $\Gal(L/F)$ is nonabelian it must be the case that $M \not\simeq A_1 \oplus A_1$, and so $M \simeq A_2$.  If $\langle \gamma \rangle = M$, we must have $e(\gamma) = 0$ by the previous proposition.  Conversely, if $K/F$ is an extension with $\Gal(K/F) \simeq \Z/p$ and $\langle \gamma \rangle \subseteq J(K)$ satisfies $\ell(\gamma)=2$ and $e(\gamma) = 0$, then $\langle \gamma \rangle$ corresponds to an extension $L/K$ with $\Gal(L/F) \simeq H_{p^3}$.

Suppose now that $L/F$ has $\Gal(L/F) \simeq M_{p^3}$.  Then there is a unique subextension $K/F$ with $\Gal(K/F) \simeq \Z/p$ and $\Gal(L/K) \simeq \Z/p \times \Z/p$.  Within $J(K)$ there exists a submodule $M$ corresponding to $L/K$, and again it must be the case that $M =\langle \gamma \rangle$ with $\ell(\gamma) = 2$ and $e(\gamma) \neq 0$.  Conversely, if $K/F$ is an extension with $\Gal(K/F) \simeq \Z/p$ and $\langle \gamma \rangle \subseteq J(K)$ satisfies $\ell(\gamma) = 2$ and $e(\gamma) \neq 0$, then $\langle \gamma\rangle$ corresponds to an extension $L/F$ containing $K$ with $\Gal(L/F) \simeq M_{p^3}$.
\end{example}

The structure of $J(K)$ was computed when $\xi_p \in K$ in \cite[Thm.~2] {MSS1}, when $\xi_p \not\in K$ but $\textrm{char}(K) \neq p$ in \cite[Thm.~2]{MSSauto}, and when $\textrm{char}(K) = p$ in \cite[Prop.~6.2]{Schultz2}.  (Note that in the case of $\textrm{char}(K) = p$ and $\xi_p \not\in F$,  there is a module decomposition for $K^\times/K^{\times p}$ provided by \cite[Thm.~1]{MSS1}; in this case, however, $J(K) \neq K^\times/K^{\times p}$, so this is not the decomposition we provide below.)  Though there are some distinctions between the structure of $J(K)$ in these cases, certain qualitative information about these modules is common in all cases.  We summarize the important characteristics in the following

\begin{proposition}\label{prop:decomposition}
If $\Gal(K/F) \simeq G_n$, then $J(K) = \langle \chi \rangle \oplus \bigoplus_{i=0}^n Y_i,$ where
\begin{itemize}
\item $\ell(\chi) = p^{i(K/F)}+1$ for some $i(K/F) \in \{-\infty,0,1,\cdots,n-1\}$, and $e(\chi) \neq 0$; and
\item for each $0 \leq i \leq n$ there exists $\mathfrak{d}_i$ so that $Y_i \simeq \bigoplus_{\mathfrak{d}_i} \F_p[G_i]$, and if $i<n$ then $Y_i \subseteq \ker e$.
\end{itemize}
\end{proposition}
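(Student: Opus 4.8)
The plan is to reduce the statement to the three known structure theorems (\cite[Thm.~2]{MSS1} for $\xi_p \in K$, \cite[Thm.~2]{MSSauto} for $\xi_p \notin K$ with $\textrm{char}(K) \neq p$, and \cite[Prop.~6.2]{Schultz2} for $\textrm{char}(K) = p$) and then extract the two uniform features claimed. First I would treat each of the three cases separately, in each case quoting the explicit decomposition of $J(K)$ as an $\F_p[G_n]$-module into indecomposables $A_\ell$. By the classification of $\F_p[G_n]$-modules recalled before the proposition, each such decomposition is a direct sum of cyclic modules; grouping together all summands isomorphic to the free module $A_{p^i} = \F_p[G_i]$ for each $i$ with $0 \leq i \leq n$, and isolating the one remaining ``exceptional'' summand, produces the shape $\langle \chi \rangle \oplus \bigoplus_{i=0}^n Y_i$ with $Y_i \simeq \bigoplus_{\mathfrak d_i} \F_p[G_i]$. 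The content is then to identify the exceptional summand $\langle \chi \rangle$ and verify (i) $\ell(\chi) = p^{i(K/F)}+1$ for some $i(K/F) \in \{-\infty,0,\dots,n-1\}$ with $e(\chi) \neq 0$, and (ii) $Y_i \subseteq \ker e$ whenever $i<n$.

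For step (i), the key is that $\ell(\chi) = p^i+1$ is the one value strictly between two consecutive powers of $p$ that occurs, and it corresponds to the generator $\chi$ associated to $\xi_p$ (or to the distinguished Artin–Schreier/norm datum in characteristic $p$): this is precisely the content of the cited structure theorems, where the exceptional summand is singled out by the class of $\xi_p$ (when $\textrm{char}(K)\neq p$) or by a trace/norm normalization (when $\textrm{char}(K) = p$). One sets $i(K/F)$ to be this exponent, with the convention $i(K/F) = -\infty$ (so $p^{i(K/F)}+1 = 1$, i.e.\ $\langle\chi\rangle \simeq A_1$) precisely when $\xi_p \notin K$ in the $\textrm{char}(K)\neq p$ case; the bound $i(K/F) \leq n-1$ comes from the fact that $\ell(\chi) \leq p^n$ forces $p^{i(K/F)}+1 \leq p^n$, hence $i(K/F) < n$. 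The assertion $e(\chi) \neq 0$ follows from the known behavior of the index function on the exceptional summand: in the cited theorems $\chi$ is characterized (up to the usual adjustments) as a generator on which the index is nonzero, equivalently as the generator giving rise to an $A_{\ell} \bullet G_n$ rather than $A_{\ell}\rtimes G_n$ extension by Proposition \ref{prop:modules.as.galois.groups}.

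For step (ii), the statement $Y_i \subseteq \ker e$ for $i<n$ should follow because the index $e$ is only defined on $J(K) \cap \ker((\sigma-1)^{p^n-1})$ and is $\F_p$-linear there; a free summand $\F_p[G_i] = A_{p^i}$ with $i<n$ has every element killed by $(\sigma-1)^{p^i} \subseteq (\sigma-1)^{p^n-1}$, so its elements lie in the domain of $e$, and the structure theorems assert $e$ vanishes identically on these ``generic'' free parts — intuitively because such a free submodule contributes $A_{p^i}\rtimes G_n$-type (split) embedding data, again via Proposition \ref{prop:modules.as.galois.groups}. The main obstacle, and the only place real work is needed, is bookkeeping: matching the notation of three different papers' decomposition theorems to the single uniform statement here, checking that in each case the ``extra'' cyclic summand really does have length exactly one more than a power of $p$ and nonzero index, and confirming the index vanishes on the free parts — in the characteristic $p$ case this last point requires unwinding the definition $e(\gamma) = (\sigma-1)\rho(Tr_{K/F}(\gamma))$ against the explicit generators produced by \cite[Prop.~6.2]{Schultz2}. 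I would present this as a short case analysis with the verifications of (i) and (ii) done once the correct summand is named, citing the three structure theorems for the decomposition itself.
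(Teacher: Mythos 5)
Your strategy coincides with the paper's: the paper gives no proof of this proposition at all --- it introduces it with ``We summarize the important characteristics in the following'' and rests entirely on the three cited structure theorems (\cite[Thm.~2]{MSS1}, \cite[Thm.~2]{MSSauto}, \cite[Prop.~6.2]{Schultz2}), which is exactly the reduction you propose. One correction to the commentary in your step (i): by the paper's own gloss, $i(K/F) = -\infty$ precisely when $K/F$ embeds in a cyclic extension of degree $p^{n+1}$ (hence always when $\textrm{char}(K)=p$, by Witt's theorem), not ``precisely when $\xi_p \notin K$''; a field with $\xi_p \in K$ can perfectly well have $i(K/F)=-\infty$. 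This slip is not load-bearing, since the identification of $\langle\chi\rangle$ and its length is in any case imported from the cited theorems rather than derived.
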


The invariant $i(K/F)$ from this theorem has a variety of interpretations, though there are two that are important for our purposes.  The first has an embedding problem flavor. If $K/F$ embeds in a cyclic extension of degree $p^{n+1}$, then $i(K/F) = -\infty$.  Otherwise, write $K_i$ for the subextension of degree $p^i$ over $F$, and choose $s$ minimally so that $K/K_s$ embeds in a cyclic extension of degree $p^{n-s+1}$; then $i(K/F) = s-1$.  Note in particular that $i(K/F)=-\infty$ whenever $\textrm{char}(K)=p$, since Witt's Theorem on embedding problems in characteristic $p$ (\cite{Wi}) tells us that any $\Z/p^n$-extension embeds in a $\Z/p^{n+1}$-extension in this setting.  The second interpretation of $i(K/F)$ concerns the dimensions of modules generated by elements with nontrivial index: if $e(\gamma) \neq 0$, then $\ell(\gamma) \geq \ell(\chi) = p^{i(K/F)}+1$.

In light of this correspondence, one of the immediate observations to make from Proposition \ref{prop:decomposition} is the following.

\begin{corollary}[cf. {\cite[Prop.~10.2]{EM11}}]
If $K/F$ is a $\Z/p$-extension, then either $\Z/p^2 \twoheadrightarrow \Z/p$ or $M_{p^3} \twoheadrightarrow \Z/p$ is solvable over $K/F$.  More generally, if $K/F$ is a $\Z/p^n$-extension, then for some $i \in \{-\infty,0,1,\cdots,n-1\}$ the embedding problem $A_{p^i+1} \bullet \Z/p^n \twoheadrightarrow \Z/p^n$  is solvable over $K/F$.  
\end{corollary}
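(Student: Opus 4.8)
The plan is to read the statement directly off Propositions~\ref{prop:decomposition} and~\ref{prop:modules.as.galois.groups}, with a little bookkeeping at the end to name the small groups. Fix a $\Z/p^n$-extension $K/F$, so $\Gal(K/F)\simeq G_n$, and apply Proposition~\ref{prop:decomposition} to write $J(K)=\langle\chi\rangle\oplus\bigoplus_{i=0}^n Y_i$. The distinguished summand satisfies $\ell(\chi)=p^{i(K/F)}+1$ and $e(\chi)\neq 0$; in particular $\chi\neq 0$, so $\langle\chi\rangle$ is a genuine cyclic $\F_p[G_n]$-submodule of $J(K)$, corresponding to an elementary $p$-abelian extension $L/K$ that is Galois over $F$.

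Next I would verify that $\ell:=\ell(\chi)=p^{i(K/F)}+1$ lies strictly below $p^n$, so that Proposition~\ref{prop:modules.as.galois.groups} applies in the ``$\bullet$'' regime rather than in the semidirect-product-only case $\ell=p^n$. Since $i(K/F)\in\{-\infty,0,1,\dots,n-1\}$, the extreme case is $i(K/F)=n-1$, where $\ell=p^{n-1}+1$ and $p^n-\ell=p^{n-1}(p-1)-1>0$ precisely because $p>2$; when $n=1$ this inequality is exactly $2<p$, which is where the standing hypothesis enters. (This bound also guarantees $(\sigma-1)^{p^n-1}\chi=0$, so $e(\chi)$ is defined, consistent with Proposition~\ref{prop:decomposition}.) With $\ell<p^n$ and $e(\chi)\neq 0$, Proposition~\ref{prop:modules.as.galois.groups} says that $\langle\chi\rangle$ witnesses a solution of the embedding problem $A_{p^{i(K/F)}+1}\bullet G_n\twoheadrightarrow G_n$ over $K/F$. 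Taking $i=i(K/F)$ gives the general statement.

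For the $n=1$ case I would translate the two possibilities $i(K/F)\in\{-\infty,0\}$ into named groups. When $i(K/F)=0$ the relevant group is $A_2\bullet G_1$, which the Example above identifies with $M_{p^3}$. When $i(K/F)=-\infty$ the relevant group is $A_{p^{-\infty}+1}\bullet G_1=A_1\bullet G_1$; since $A_1\simeq\F_p$ carries the trivial $G_1$-action, $A_1\bullet G_1$ is a non-split central extension of $\Z/p$ by $\Z/p$, and a direct computation with the defining cocycle (with $i=1$, so $(\sigma-1)^{i-1}=1$) gives $(0,\sigma)^p=(1,1)\neq(0,1)$, forcing $(0,\sigma)$ to have order $p^2$ and hence $A_1\bullet G_1\simeq\Z/p^2$. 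This yields the dichotomy between $\Z/p^2\twoheadrightarrow\Z/p$ and $M_{p^3}\twoheadrightarrow\Z/p$.

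There is no real obstacle here: the substance is entirely contained in Proposition~\ref{prop:decomposition} (whose proof draws on \cite{MSS1,MSSauto,Schultz2}) and in the correspondence of Proposition~\ref{prop:modules.as.galois.groups}. The only points requiring care are the elementary estimate $\ell(\chi)<p^n$, which is what lets us invoke the $\bullet$-case and is the sole place $p>2$ is used, and the identifications $A_2\bullet G_1\simeq M_{p^3}$ and $A_1\bullet G_1\simeq\Z/p^2$.
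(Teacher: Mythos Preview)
Your proposal is correct and follows essentially the same approach as the paper's proof, which simply invokes Propositions~\ref{prop:decomposition} and~\ref{prop:modules.as.galois.groups} in succession. Your additional verifications---that $\ell(\chi)<p^n$ (using the standing hypothesis $p>2$) and the identifications $A_1\bullet G_1\simeq\Z/p^2$, $A_2\bullet G_1\simeq M_{p^3}$ for the $n=1$ case---fill in details the paper leaves implicit, but the overall strategy is identical.
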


\begin{proof}
By Proposition \ref{prop:decomposition} there exists an element $\chi \in J(K)$ so that $e(\chi) \neq 0$ and $\ell(\chi) = p^{i(K/F)}+1$ for some $i(K/F) \in \{-\infty,0,1,\cdots,n-1\}$.  By Proposition \ref{prop:modules.as.galois.groups} this module corresponds to a solution to the embedding problem $A_{p^{i(K/F)}+1} \bullet G_n \twoheadrightarrow G_n$.
\end{proof}

\begin{corollary}
If $K/F$ is a $\Z/p^n$-extension so that for some $j>i$ both $A_i \bullet \Z/p^n \twoheadrightarrow \Z/p^n$ and $A_j \bullet \Z/p^n \twoheadrightarrow \Z/p^n$ are solvable over $K/F$, then $A_{j} \rtimes \Z/p^n \twoheadrightarrow \Z/p^n$ is also solvable over $K/F$.
\end{corollary}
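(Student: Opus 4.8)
The plan is to translate the hypothesis into the module-theoretic language established in the previous sections and then exploit the ultrametric property of the length function $\ell$. By Example~\ref{ex:hp3.and.mp3.in.module.language} and Proposition~\ref{prop:modules.as.galois.groups}, the solvability of $A_i \bullet \Z/p^n \twoheadrightarrow \Z/p^n$ over $K/F$ gives an element $\gamma_i \in J(K)$ with $\ell(\gamma_i) = i$ and $e(\gamma_i) \neq 0$, and likewise the solvability of $A_j \bullet \Z/p^n \twoheadrightarrow \Z/p^n$ gives $\gamma_j \in J(K)$ with $\ell(\gamma_j) = j$ and $e(\gamma_j) \neq 0$. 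To produce a solution to $A_j \rtimes \Z/p^n \twoheadrightarrow \Z/p^n$, it suffices (again by Proposition~\ref{prop:modules.as.galois.groups}) to exhibit some $\gamma \in J(K)$ with $\ell(\gamma) = j$ and $e(\gamma) = 0$.

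The natural candidate is a suitable $\F_p$-linear combination of $\gamma_i$ and $\gamma_j$. First I would observe that $e$ is $\F_p$-linear on its domain (which is an $\F_p$-subspace of $J(K)$), so that $e(\gamma_j - c\,\gamma_i)$ can be made to vanish for an appropriate choice of scalar $c \in \F_p$: since $e(\gamma_i) \neq 0$, we may take $c = e(\gamma_j)/e(\gamma_i)$, and then set $\gamma = \gamma_j - c\,\gamma_i$. It remains to check that this $\gamma$ still has length exactly $j$. This is exactly where the ultrametric property comes in: since $\ell(\gamma_j) = j > i = \ell(c\,\gamma_i)$ (the scalar $c$ does not change the length, and we need $c \neq 0$; if $e(\gamma_j) = 0$ already we are done immediately with $\gamma = \gamma_j$), the ultrametric property with equality in the unequal-length case gives $\ell(\gamma) = \ell(\gamma_j - c\,\gamma_i) = \max\{j, i\} = j$. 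Finally one should note that $\gamma_j - c\,\gamma_i$ does lie in the domain of $e$, namely $J(K) \cap \ker\left((\sigma-1)^{p^n-1}\right)$, because both $\gamma_i$ and $\gamma_j$ do (their lengths $i, j$ are strictly less than $p^n$, since the ``$\bullet$'' extensions $A_i \bullet G_n$ require $i < p^n$).

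I expect the entire argument to be short, with no real obstacle — the content is entirely in recognizing that the index function $e$ is linear and that $\ell$ behaves ultrametrically, both of which are already recorded in the excerpt. The one point requiring a moment of care is the degenerate case $e(\gamma_j) = 0$, which cannot literally occur under the stated hypothesis (we assumed $A_j \bullet \Z/p^n$, not $A_j \rtimes \Z/p^n$, is solvable, forcing $e(\gamma_j) \neq 0$), so $c \neq 0$ is automatic and the ultrametric step applies cleanly. If one wished, one could also phrase the conclusion directly in terms of Galois groups via Example~\ref{ex:hp3.and.mp3.in.module.language}, but the module formulation is cleaner and the correspondence of Proposition~\ref{prop:modules.as.galois.groups} does all the work of passing back to embedding problems.
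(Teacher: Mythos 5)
Your proposal is correct and follows essentially the same route as the paper's own proof: both take generators $\gamma_i,\gamma_j$ of nontrivial index with $\ell(\gamma_i)=i$, $\ell(\gamma_j)=j$, kill the index with an appropriate scalar multiple of $\gamma_i$ using linearity of $e$, and invoke the ultrametric property (with strict inequality of lengths) to see the combination still has length $j$. The only difference is cosmetic (the paper writes $c\gamma_i+\gamma_j$ where you write $\gamma_j-c\gamma_i$), and your observation that $c\neq 0$ is automatic is a correct, if minor, point of care.
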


\begin{proof}
Solutions to the embedding problems $A_i \bullet G_n \twoheadrightarrow G_n$ and $A_j \bullet G_n \twoheadrightarrow G_n$ correspond to elements $\gamma_i,\gamma_j \in J(K)$ with nontrivial index and satisfying $\ell(\gamma_i) = i$ and $\ell(\gamma_j) = j$.  By choosing an appropriate $c \in \Z \setminus p\Z$, one has $e(c\gamma_i + \gamma_j) = 0$; furthermore $\ell(c\gamma_i +\gamma_j) =  j$ by the ultrametric property.  Hence $\langle c\gamma_i +\gamma_j \rangle$ corresponds to a solution to $A_j \rtimes G_n \twoheadrightarrow G_n$.
\end{proof}

\begin{remark*}
One cannot make this statement stronger by saying that the appearance of $A_i \bullet G_n$ and $A_j \bullet G_n$ over a field $F$ forces the appearance of $A_j \rtimes G_n$ over $F$ since there are fields $F$ which admit both $\Z/p^2 \simeq A_1 \bullet G_1$- and $M_{p^3} \simeq A_2 \bullet G_1$-extensions, but which do not admit an $H_{p^3} \simeq A_2 \rtimes G_1$-extension.  See, for example, \cite[p.~167]{Br}.
\end{remark*}

\section{Automatic realizations related to $H_{p^3} \Rightarrow M_{p^3}$}

The last two results show that the appearance of certain groups as Galois groups over a field $F$ can force the appearance of other groups as Galois groups over $F$ as well in a non-trivial way.  In this section we will consider other results in this vein.  For a group $G$ and a field $F$, we write $\nu(G,F)$ for the number of extensions $L/F$ with $\Gal(L/F) \simeq G$ in a fixed algebraic closure of $F$.  In the same way, $\nu(G \twoheadrightarrow Q, K/F)$ counts the number of solutions $L/F$ to a given embedding problem $G \twoheadrightarrow Q$ over $K/F$.  

A group $G$ is said to automatically realize a group $H$ if $\nu(G,F)>1$ implies $\nu(H,F)>1$ for any field $F$.  The classic automatic realization theorem is Whaples' result \cite{Wh} that if $p$ is an odd prime number, then $\Z/p^i$ automatically realizes $\Z/p^j$ for all $i<j$.  One can also consider automatic realizations for embedding problems: $G \twoheadrightarrow Q$ is said to automatically realize $H \twoheadrightarrow Q$ if $\nu(G \twoheadrightarrow Q,K/F)\geq 1$ implies $\nu(H \twoheadrightarrow Q,K/F)\geq 1$. 

Brattstr\"om was the first to consider automatic realizations between $H_{p^3}$ and $M_{p^3}$ in \cite{Br}. She showed that $H_{p^3}$ automatically realizes $M_{p^3}$ \cite[Th.~2]{Br}, and that $M_{p^3}$ does not automatically realize $H_{p^3}$ in general \cite[p.~167]{Br}. (The fact that $H_{p^3}$ automatically realizes $M_{p^3}$ was also proved in a different way in \cite[Cor.~12]{EM11}.) However, she does argue that the solvability of the embedding problem $M_{p^3} \twoheadrightarrow G_1$ over a field $K/F$ will imply the solvability of the embedding problem $H_{p^3} \twoheadrightarrow G_1$ over $K/F$ if either $\textrm{char}(K)=p$ or $\xi_p \in N_{K/F}(K^\times)$ \cite[Th.~2]{Br}.  (There are some other known automatic realization results associated with $H_{p^3}$ and $M_{p^3}$.  For instance, in \cite[Th.~1.4A]{J2} it was observed that for any finite group $G$, the group $H_{p^3} \times G$ automatically realizes $M_{p^3} \times G$.  In \cite[Prop.~1.5]{J2} it was also observed that if $$A = \left\langle x,y | x^{p^2} = y^{p^2} = 1, xy = yx^{1+p}\right\rangle,$$ then $M_{p^3}$ automatically realizes $A$.)

By interpreting $H_{p^3}$ as $A_2 \rtimes G_1$ and $M_{p^3}$ as $A_2 \bullet G_1$, we now show Brattstr\"om's results can be viewed from the perspectives of Propositions \ref{prop:modules.as.galois.groups} and \ref{prop:decomposition}.  Indeed, if the embedding problem $A_2 \rtimes G_1 \twoheadrightarrow G_1$ is solvable, then there exists $\langle \gamma \rangle \subseteq J(K)$ with $\ell(\gamma) = 2$ and $e(\gamma) = 0$.  Using the notation of Proposition \ref{prop:decomposition}, if $\ell(\chi) = 2$, then $\langle \chi \rangle$ corresponds to a solution to $A_2 \bullet G_1 \twoheadrightarrow G_1$ and we are done.  Otherwise $\ell(\chi)=1$, and so $\ell(\gamma + \chi) = 2$ by the ultrametric property.  Since $e(\gamma + \chi) = e(\chi) \neq 0$, it therefore follows that $\langle \gamma + \chi \rangle$ corresponds to a solution to $A_2 \bullet G_1 \twoheadrightarrow G_1$ over $K/F$.  

On the other hand, suppose that $L/F$ is an $M_{p^3}$-extension, and let $K/F$ be the unique $\Z/p$-subextension.  If we assume $\xi_p \in N_{K/F}(K^\times)$ then by a famous result of Albert we know that $K/F$ embeds in a $\Z/p^2$-extension, whereas if $\textrm{char}(F)=p$ then it's Witt's theorem which tells us that $K/F$ embeds in a $\Z/p^2$-extension.  In either case we conclude that $\chi$ from Proposition \ref{prop:decomposition} must satisfy $\ell(\chi) = 1$.    Now let $\langle \gamma \rangle$ correspond to the given $M_{p^3}$ extension.  Then $\ell(\gamma) = 2$ and $e(\gamma) \neq 0$.  By choosing an appropriate $c \in \Z \setminus p\Z$, one has $e(\gamma + c \chi) = 0$ and $\ell(\gamma +c\chi) = 2$.  Hence this element corresponds to a solution to $A_2 \rtimes G_1 \twoheadrightarrow G_1$.

Using this same line of reasoning,  we fit this result into a family of similar results which we phrase in the slightly stronger language of automatic realizations of embedding problems.
 
\begin{proposition}\label{prop:automatic.realizations}
We have the following automatic realization results:
\begin{enumerate}
\item\label{it:split.realizes.split} $A_{\ell} \rtimes G_n \twoheadrightarrow G_n$ automatically realizes $A_{\ell+1} \rtimes G_n \twoheadrightarrow G_n$ for $\ell \neq p^k$ with $k \in \{0, 1, \cdots, n-1\}$;
\item\label{it:nonsplit.realizes.split} $A_{\ell} \bullet G_n \twoheadrightarrow G_n$ automatically realizes $A_{\ell} \rtimes G _n \twoheadrightarrow G_n$ for $\ell \neq p^k+1$ with $k \in \{0,1,\cdots,n-1\}$;
\item\label{it:nonsplit.realizes.nonsplit} $A_\ell \bullet G_n \twoheadrightarrow G_n$ automatically realizes $A_{\ell-1} \bullet G_n \twoheadrightarrow G_n$ for $\ell \neq p^k+1$ with $k \in \{0,1,\cdots,n-1\}$; and
\item\label{it:split.realizes.nonsplit} $A_{p^{n-1} + 1} \rtimes G_n \twoheadrightarrow G_n$ automatically realizes $A_{p^{n-1}+k} \bullet G_n \twoheadrightarrow G_n$ for $1\leq k < p^n-p^{n-1}$.
\end{enumerate}
\end{proposition}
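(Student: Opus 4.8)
The plan is to prove all four automatic realization results by translating the hypothesis ``$\nu(G\twoheadrightarrow G_n,K/F)\geq 1$'' into the existence of a cyclic submodule $\langle\gamma\rangle\subseteq J(K)$ with prescribed invariants $\ell(\gamma)$ and $e(\gamma)$ via Proposition \ref{prop:modules.as.galois.groups}, then to produce a new cyclic submodule realizing the target embedding problem either by exploiting the canonical summand $\langle\chi\rangle$ from Proposition \ref{prop:decomposition} or by acting on $\gamma$ with an element of $\F_p[G_n]$. In each case the key arithmetic inputs are: the ultrametric property of $\ell$ (and the additivity $\ell((\sigma-1)^m\gamma)$ behavior), the $\F_p[G_n]$-linearity of $e$ on its domain $J(K)\cap\ker((\sigma-1)^{p^n-1})$, and the defining property of $i(K/F)$ --- namely $e(\gamma)\neq 0$ forces $\ell(\gamma)\geq \ell(\chi)=p^{i(K/F)}+1$, and $Y_i\subseteq\ker e$ for $i<n$.

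For part (\ref{it:split.realizes.split}): given $\gamma$ with $\ell(\gamma)=\ell$ and $e(\gamma)=0$, I would note that since $\ell\neq p^k$ for any $k$, the element $\gamma$ cannot lie in a free summand $\F_p[G_n]$ alone generating all of $J(K)$; more precisely, because $\ell+1\leq p^n$ one can find $\gamma'\in J(K)$ with $\ell(\gamma')=\ell+1$ by a standard argument about the decomposition --- the hypothesis $\ell\neq p^k$ guarantees that the summand supporting $\gamma$ has a ``longer'' companion or that $\langle\chi\rangle$ has sufficient length. (Here I expect to need the precise statement that $\langle\gamma\rangle\subseteq\langle\delta\rangle$ for some $\delta$ with $\ell(\delta)=\ell(\gamma)+1$ unless $\ell(\gamma)$ is one of the ``jump'' values $p^k$.) Then $e(\delta)=0$ as well if $\delta$ lies in a $Y_i$ with $i<n$, or one corrects by $\chi$; $\langle\delta\rangle$ solves $A_{\ell+1}\rtimes G_n\twoheadrightarrow G_n$. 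For part (\ref{it:nonsplit.realizes.split}): given $\gamma$ with $\ell(\gamma)=\ell$ and $e(\gamma)\neq 0$, the condition $\ell\neq p^k+1$ together with $\ell\geq p^{i(K/F)}+1$ forces $\ell(\chi)<\ell$, so $\ell(\gamma+c\chi)=\ell$ by the ultrametric property while $e(\gamma+c\chi)=e(\gamma)+c\,e(\chi)$ can be made zero by choosing $c\in\Z\setminus p\Z$; thus $\langle\gamma+c\chi\rangle$ solves $A_\ell\rtimes G_n\twoheadrightarrow G_n$. For part (\ref{it:nonsplit.realizes.nonsplit}): given $\gamma$ with $\ell(\gamma)=\ell$, $e(\gamma)\neq 0$, set $\gamma'=(\sigma-1)\gamma$, so $\ell(\gamma')=\ell-1$; I must check $e(\gamma')\neq 0$, which follows because $e$ evaluated on $(\sigma-1)\gamma$ relates to $e(\gamma)$ by the $(\sigma-1)$-twisting built into the definition of $e$ (in the characteristic-$p$ case $e(\gamma)=(\sigma-1)\rho(\mathrm{Tr}(\gamma))$ and in the other case an analogous $(\sigma-1)$ appears) --- here the hypothesis $\ell\neq p^k+1$ is what prevents $\gamma'$ from dropping into $\ker e$, since otherwise one would contradict the minimality in $i(K/F)$. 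Then $\langle\gamma'\rangle$ solves $A_{\ell-1}\bullet G_n\twoheadrightarrow G_n$.

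For part (\ref{it:split.realizes.nonsplit}): here we start from $\gamma$ with $\ell(\gamma)=p^{n-1}+1$ and $e(\gamma)=0$ and must produce, for each $1\leq k<p^n-p^{n-1}$, an element of index $p^{n-1}+k$ with nontrivial index. The natural move is to combine $\gamma$ with the canonical element $\chi$: if $i(K/F)=n-1$ then $\ell(\chi)=p^{n-1}+1=\ell(\gamma)$, and then $\gamma$ and $\chi$ together generate (inside $\langle\gamma\rangle\oplus\langle\chi\rangle$) cyclic submodules $\langle\gamma-c\chi\rangle$ of every length from $1$ up through $p^{n-1}+1$ with controllable index --- but we need lengths up to $p^{n-1}+k$ which can exceed $p^{n-1}+1$, so more care is needed and one must use that $A_{p^{n-1}+1}\rtimes G_n\twoheadrightarrow G_n$ being solvable constrains $J(K)$ further (it forces $\ell(\gamma)=p^{n-1}+1$ to be achieved with $e=0$, which via Proposition \ref{prop:decomposition} means there is a genuine free summand $\F_p[G_n]$ or a long $Y_i$ providing room). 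I would argue that solvability of $A_{p^{n-1}+1}\rtimes G_n\twoheadrightarrow G_n$ implies $i(K/F)=n-1$ and that $\langle\chi\rangle\oplus$(the summand containing $\gamma$) contains, for each desired $k$, an element $\delta$ with $\ell(\delta)=p^{n-1}+k$ and $e(\delta)=e(\chi)\neq 0$, obtained as $\chi$ plus a suitable multiple of a generator of a free summand shifted by $(\sigma-1)^{p^{n-1}+1-j}$; the upper bound $k<p^n-p^{n-1}$ is exactly what keeps $p^{n-1}+k<p^n$ so the index function $e$ is defined on $\delta$.

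The main obstacle I anticipate is part (\ref{it:split.realizes.nonsplit}): unlike the other three, which are ``local'' manipulations of a single module with a single correction by $\chi$, this one requires extracting detailed structural consequences of the solvability hypothesis for the parameterizing space $J(K)$ --- specifically pinning down $i(K/F)$ and locating a summand long enough to host generators of every intermediate length up to $p^{n-1}+k$ --- and then verifying that the index stays nonzero throughout that range. Getting the bookkeeping on $\ell$ and $e$ simultaneously correct across this whole family of lengths, using only the qualitative decomposition in Proposition \ref{prop:decomposition}, is where the real work lies; the other three parts should reduce to one- or two-line applications of the ultrametric property and $\F_p$-linearity of $e$, exactly mirroring the $H_{p^3}\Rightarrow M_{p^3}$ discussion that precedes the proposition.
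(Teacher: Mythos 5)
Your overall strategy --- translate solvability into the existence of $\langle\gamma\rangle\subseteq J(K)$ with prescribed $\ell$ and $e$, then manipulate $\gamma$ using $\chi$ and the ultrametric property --- is the paper's strategy, and your treatments of parts (\ref{it:split.realizes.split}) and (\ref{it:nonsplit.realizes.split}) match the paper's (for (\ref{it:split.realizes.split}) the paper makes your ``standard argument'' precise by expanding $\gamma = f\chi + \sum f_i\alpha_i$ over the basis of Proposition \ref{prop:decomposition}, locating the summand that achieves $\ell(\gamma)$, noting that $\ell\neq p^k$ forces the relevant coefficient to lie in $\langle\sigma-1\rangle$, and dividing out one factor of $\sigma-1$). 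But your part (\ref{it:nonsplit.realizes.nonsplit}) contains a genuine error: the element $\gamma'=(\sigma-1)\gamma$ always has \emph{trivial} index. In characteristic $p$ this is because $\mathrm{Tr}_{K/F}((\sigma-1)\gamma)=0$, and otherwise because $N_{\hat K/\hat F}(\gamma^{\sigma-1})=1$; the $(\sigma-1)$ built into the definition of $e$ works against you here, not for you, and indeed the paper's own proof of part (\ref{it:split.realizes.split}) relies on exactly this vanishing when it asserts $e((\sigma-1)^{v(f)-1}\chi)=0$ for $v(f)\geq 2$. No hypothesis on $\ell$ can rescue $(\sigma-1)\gamma$. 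The paper's fix is to first produce (as in part (\ref{it:nonsplit.realizes.split})) an element $c\chi+\gamma$ with trivial index and length $\ell>\ell(\chi)$, and then take either $\chi$ itself (if $\ell(\chi)=\ell-1$) or $\chi+(\sigma-1)(c\chi+\gamma)$: the second summand has length $\ell-1>\ell(\chi)$ and trivial index, so the sum has length $\ell-1$ by the ultrametric property and index $e(\chi)\neq 0$.

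Your part (\ref{it:split.realizes.nonsplit}) also goes astray: solvability of $A_{p^{n-1}+1}\rtimes G_n\twoheadrightarrow G_n$ does \emph{not} force $i(K/F)=n-1$ (an element of length $p^{n-1}+1$ with trivial index can come from a free summand regardless of $i(K/F)$), so the structural claim you build on is false. The paper's route is simpler and avoids pinning down $i(K/F)$: iterate part (\ref{it:split.realizes.split}) --- legitimate since no integer strictly between $p^{n-1}$ and $p^n$ is a power of $p$ --- to get $\langle\gamma\rangle$ with $\ell(\gamma)=p^{n-1}+k$ and $e(\gamma)=0$, and take $\chi+\gamma$. Then $e(\chi+\gamma)\neq 0$ and $\ell(\chi+\gamma)=p^{n-1}+k$ automatically unless $k=1$ and $i(K/F)=n-1$; in that one delicate case $\ell(\chi+\gamma)<p^{n-1}+1$ would contradict the minimality of $\ell(\chi)$ among lengths of elements of nontrivial index. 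That minimality argument is the key point your sketch does not supply.
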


\begin{proof}
(\ref{it:split.realizes.split}) was essentially the subject of \cite{MSSauto}, but we reprove the result here.  A solution to the embedding problem $A_\ell \rtimes G_n \twoheadrightarrow G_n$ corresponds to a submodule $\langle \gamma \rangle \subseteq J(K)$ with $\ell(\gamma) = \ell$ and $e(\gamma) = 0$.  By Proposition \ref{prop:decomposition} we can find an $\F_p[G_n]$-basis $\{\chi\} \cup \{\alpha_i\}_{i \in \mathcal{I}}$ for $J(K)$ so that $e(\chi) \neq 0$ and $\ell(\chi)  = p^i(K/F)+1$, and so that for all $i \in \mathcal{I}$ we have $\ell(\alpha_i) = p^{\ell_i}$ for $\ell_i \in \{0,\cdots,n\}$ and $e(\alpha_i) = 0$ when $\ell(\alpha_i)<p^n$.  Express
$$\gamma = f \chi + \sum_{i \in \mathcal{I}} f_i \alpha_i$$
with $f,f_i \in \F_p[G_n]$; since $e(\gamma) = 0$, it must be the case that $f \in \langle \sigma-1 \rangle$; likewise since $\ell(\gamma) = \ell < p^n$ we must have $f_i \in \langle \sigma-1 \rangle$ for all $i \in \mathcal{I}$ such that $\ell(\alpha_i) = p^n$.  Now we have $\ell(\gamma) = \max\left\{\ell(f\chi),\left\{\ell(f_i\alpha_i): i \in \mathcal{I} \right\} \right\}$ using the ultrametric property together with the $\F_p[G_n]$-independence of the set $\{\chi\} \cup \{\alpha_i\}_{i \in \mathcal{I}}$.  

We consider two cases.  If $\ell(\gamma) = \ell(f \chi)$, then since $\ell \neq p^k$ for any $k \in \{0,1,\cdots,n-1\}$ and $\ell(f \chi) = p^{i(K/F)}+1-v(f)$, we must have $v(f) \geq 2$.  Hence $\ell((\sigma-1)^{v(f)-1}\chi) = \ell+1$ and $e((\sigma-1)^{v(f)-1}\chi) = 0$.  Hence $A_{\ell+1} \rtimes G_n \twoheadrightarrow G_n$ has a solution.

On the other hand, if $\ell(\gamma) = \ell(f_i\alpha_i)$ for some $i \in \mathcal{I}$, then since $\ell(\alpha_i) = p^{\ell_i}$, it must be the case that $v(f_i) \geq 1$.  But then $\ell((\sigma-1)^{v(f_i)-1}\alpha_i) = \ell+1$ and $e((\sigma-1)^{v(f_i)-1}\alpha_i)=0$.  Again, we have a solution to $A_{\ell+1} \rtimes G_n \twoheadrightarrow G_n$.

(\ref{it:nonsplit.realizes.split}) has two potential proofs.  From the group-theoretic perspective, $A_{\ell-1} \rtimes G_n$ is a quotient of $A_\ell \bullet G_n$, and hence $A_\ell \bullet G_n \twoheadrightarrow G_n$ trivially automatically realizes $A_{\ell -1} \rtimes G_n\twoheadrightarrow G_n$.  Then (\ref{it:split.realizes.split}) tells us that $A_{\ell-1} \rtimes G_n \twoheadrightarrow G_n$ automatically realizes $A_\ell \rtimes G_n \twoheadrightarrow G_n$.  Alternatively, one could prove this result module-theoretically.  In this case, a solution to $A_{\ell} \bullet G_n \twoheadrightarrow G_n$ implies the existence of a submodule $\langle \gamma \rangle \subseteq J(K)$ with $\ell(\gamma) = \ell$ and $e(\gamma) \neq 0$.  Choose an appropriate value $c \in \Z \setminus p\Z$ so that $e(c \chi+ \gamma) = 0$, and note that $\ell(c\chi + \gamma) = \ell(\gamma)$ by the ultrametric property (recall that if $e(\gamma) \neq 0$ then $\ell(\gamma) \geq \ell(\chi)$, and our hypothesis forces this inequality to be strict).  Hence $\langle c\chi +\gamma \rangle$ corresponds to a solution to $A_{\ell} \rtimes G_n \twoheadrightarrow G_n$.

For the proof of (\ref{it:nonsplit.realizes.nonsplit}), suppose that $\langle \gamma \rangle \subseteq J(K)$ corresponds to a solution to the embedding problem $A_\ell \bullet G_n \twoheadrightarrow G_n$.  The module-theoretic proof of (\ref{it:nonsplit.realizes.split}) gives us a module $\langle c\chi +\gamma \rangle$ with $e(c\chi +\gamma) = 0$ and $\ell = \ell(c\chi+ \gamma) > \ell(\chi)$.  If $\ell(\chi) = \ell-1$, then $\langle \chi \rangle$ corresponds to a solution to the embedding problem $A_{\ell-1} \bullet G_n \twoheadrightarrow G_n$. Otherwise $\ell(\chi + (\sigma-1)(c\chi+ \gamma)) = \ell-1$, and of course this module is generated by an element of nontrivial index.  Hence it corresponds to a solution to the embedding problem $A_{\ell-1} \bullet G_n \twoheadrightarrow G_n$.

Finally, to prove (\ref{it:split.realizes.nonsplit}), we use (\ref{it:split.realizes.split}) to conclude that there is a solution to $A_{p^{n-1}+k}\rtimes G_n \twoheadrightarrow G_n$ over $K/F$.  Write $\langle \gamma \rangle \subseteq J(K)$ for the module that corresponds to a solution to the embedding problem $A_{p^{n-1}+k} \rtimes G_n \twoheadrightarrow G_n$.  Obviously $e(\chi + \gamma) \neq 0$, and if we can show $\ell(\chi + \gamma) = p^{n-1}+k$ then this module will correspond to a solution to the embedding problem $A_{p^{n-1}+k}\bullet G_n \twoheadrightarrow G_n$.  We know that $\ell(\chi+\gamma) \leq \max\{p^{n-1}+k,p^{i(K/F)}+1\} = p^{n-1}+k$, with equality if either $k>1$ or $i(K/F) \neq n-1$.  Hence $\ell(\chi+\gamma) < p^{n-1}+k$ implies both $k=1$ and $i(K/F) = n-1$.  But since $e(\chi+\gamma) \neq 0$ and $\chi$ is an element of minimal length amongst elements of non-trivial index, we must also have $p^{n-1}+1>\ell(\chi+\gamma) \geq \ell(\chi) = p^{n-1}+1$, a contradiction.  
\end{proof}

Finally, we give a proposition which builds on Proposition \ref{prop:automatic.realizations}(\ref{it:split.realizes.nonsplit}), but doesn't require the underlying $\F_p[G_n]$-module to have such a large dimension.  Recall that $K_i$ denotes the intermediate field in the extension $K/F$ such that $[K_i:F]=p^i$.  

\begin{proposition}
Let $i \in \{0,1,\cdots,n-1\}$ be given.  If the embedding problems $G_{n-i} \twoheadrightarrow G_{n-i-1}$ over $K/K_{i+1}$ and $A_{p^i+1}\rtimes G_n \twoheadrightarrow G_n$ over $K/F$ are both solvable, then the embedding problem $A_{p^i+k}\bullet G_n \twoheadrightarrow G_n$ is also solvable over $K/F$ for all $1 \leq k \leq p^{i+1}-p^i$.
\end{proposition}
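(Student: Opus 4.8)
The plan is to translate both hypotheses into the module-theoretic language of Section~\ref{sec:recasting} and then manufacture, for each admissible $k$, a cyclic submodule of $J(K)$ with the prescribed length and nontrivial index; the required solution to $A_{p^i+k}\bullet G_n \twoheadrightarrow G_n$ then follows from Proposition~\ref{prop:modules.as.galois.groups}.

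First I would unwind the hypothesis on $K/K_{i+1}$. Since $[K:K_{i+1}]=p^{n-i-1}$, solvability of $G_{n-i}\twoheadrightarrow G_{n-i-1}$ over $K/K_{i+1}$ says exactly that $K/K_{i+1}$ embeds in a cyclic extension of $K_{i+1}$ of degree $p^{n-i}=p^{n-(i+1)+1}$. By the description of the invariant $i(K/F)$ recorded just after Proposition~\ref{prop:decomposition} — taking $s=i+1$ as a legal value in the defining minimization, and noting that $i(K/F)=-\infty$ is also consistent — this forces $i(K/F)\le i$. Hence Proposition~\ref{prop:decomposition} supplies an element $\chi\in J(K)$ with $e(\chi)\ne 0$ and $\ell(\chi)=p^{i(K/F)}+1\le p^i+1$.

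Next I would build a staircase of split solutions from the hypothesis $A_{p^i+1}\rtimes G_n\twoheadrightarrow G_n$. By Proposition~\ref{prop:modules.as.galois.groups} this gives $\langle\gamma\rangle\subseteq J(K)$ with $\ell(\gamma)=p^i+1$ and $e(\gamma)=0$. I would then apply Proposition~\ref{prop:automatic.realizations}(\ref{it:split.realizes.split}) repeatedly; it promotes $A_\ell\rtimes G_n\twoheadrightarrow G_n$ to $A_{\ell+1}\rtimes G_n\twoheadrightarrow G_n$ whenever $\ell$ is not a power of $p$, and since the integers $p^i+1,p^i+2,\dots,p^{i+1}-1$ all lie strictly between the consecutive powers $p^i$ and $p^{i+1}$, none of them is a power of $p$. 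Thus $A_{p^i+k}\rtimes G_n\twoheadrightarrow G_n$ is solvable over $K/F$ for every $1\le k\le p^{i+1}-p^i$, and I fix for each such $k$ a module $\langle\gamma_k\rangle\subseteq J(K)$ with $\ell(\gamma_k)=p^i+k$ and $e(\gamma_k)=0$.

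Finally I would twist these split solutions by $\chi$ to change the index. Fix $k$. Since $e$ is additive, $e(\chi+\gamma_k)=e(\chi)\ne 0$, while $\ell(\chi)\le p^i+1\le p^i+k=\ell(\gamma_k)$. If the first inequality is strict — which is every case except $k=1$ together with $i(K/F)=i$ — then the ultrametric property gives $\ell(\chi+\gamma_k)=p^i+k$, so $\langle\chi+\gamma_k\rangle$ is a solution to $A_{p^i+k}\bullet G_n\twoheadrightarrow G_n$ by Proposition~\ref{prop:modules.as.galois.groups}. In the single remaining case we have $\ell(\chi)=p^i+1$ and $e(\chi)\ne 0$ outright, so $\langle\chi\rangle$ itself solves $A_{p^i+1}\bullet G_n\twoheadrightarrow G_n$. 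I expect Step~1 to be the real obstacle: one must correctly match the embedding problem over $K/K_{i+1}$ with the bound $i(K/F)\le i$ through the somewhat indirect definition of $i(K/F)$. The remainder is bookkeeping, the only points needing care being that the chain of applications of Proposition~\ref{prop:automatic.realizations}(\ref{it:split.realizes.split}) never stalls at a power of $p$ and that the exceptional pair $(k,i(K/F))=(1,i)$ is treated separately. (One should also remember that $A_{p^i+k}\bullet G_n$ only makes sense when $p^i+k<p^n$, which is automatic unless $i=n-1$ and $k$ is maximal.)
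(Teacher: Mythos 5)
Your proof is correct and takes essentially the same route as the paper's: the hypothesis on $K/K_{i+1}$ forces $i(K/F)\le i$ and hence $\ell(\chi)\le p^i+1$, the staircase from Proposition~\ref{prop:automatic.realizations}(\ref{it:split.realizes.split}) yields split solutions at every length $p^i+k$ (none of the intermediate lengths being a power of $p$), and twisting by $\chi$ converts them into nonsplit solutions. Your separate treatment of the borderline case $\ell(\chi)=p^i+1=\ell(\gamma_1)$ --- using $\langle\chi\rangle$ itself instead of arguing $\ell(\chi+\gamma_1)=p^i+1$ from the minimality of $\ell(\chi)$ among elements of nontrivial index, as in the proof of Proposition~\ref{prop:automatic.realizations}(\ref{it:split.realizes.nonsplit}) --- is a harmless variation.
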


\begin{proof}
The proof of this result is essentially the same as the proof of Proposition \ref{prop:automatic.realizations}(\ref{it:split.realizes.nonsplit}), though our additional hypothesis concerning the solvability of $G_{n-i}\twoheadrightarrow G_{n-i-1}$ over $K/K_{i+1}$ tells us that $i(E/F)\leq i$, so that $\ell(\chi) \leq p^i+1$. To find a solution to the desired embedding problem, we note that the solvability of $A_{p^i+1}\rtimes G_n \twoheadrightarrow G_n$ over $K/F$ implies the solvability of $A_{p^i+k}\rtimes G_n \twoheadrightarrow G_n$ over $K/F$; let $\langle \gamma \rangle$ be a module in $J(K)$ which corresponds to a solution to this embedding problem.  Then the module $\langle \gamma + \chi \rangle$ will be a solution to the embedding problem $A_{p^i+k} \bullet G_n \twoheadrightarrow G_n$.
\end{proof}

\section{Enumerating Galois extensions related to $H_{p^3}$ and $M_{p^3}$}

We now shift focus and concentrate on enumeration results related to $H_{p^3}$ and $M_{p^3}$, particularly within the family of groups $A_i \rtimes G_1$ and $A_i \bullet G_1$.  One of the results already known in this vein comes from a paper by Brattstr\"om (\cite[Thm.~5]{Br}) where it is shown that $$\nu(M_{p^3},F) = (p^2-1)\nu(H_{p^3},F)  \quad \mbox{ if }\xi_{p^2} \in F \mbox{ or }\textrm{char}(F) = p.$$  Here we present a stronger result that drops the assumption that $\xi_{p^2} \in F$ when $\textrm{char}(F) \neq p$ and gives a closed formula for the difference $\nu(M_{p^3},F)-(p^2-1)\nu(H_{p^3},F)$.  

Before arriving at this result, we will first need to consider the following $\F_p$-subspace of $J(F)$: $$\mathfrak{N} = \left\{\begin{array}{ll}
\frac{N_{F(\xi_{p^2})/F}(F(\xi_{p^2})^\times)~F^{\times p}}{F^{\times p}}, &\mbox{ when } \xi_p \in F\\[10pt]
J(F),&\mbox{ when } \textrm{char}(F) = p\\[10pt]
\mathcal{T}\left(\frac{N_{F(\xi_{p^2})/F(\xi_p)}(F(\xi_{p^2})^\times)~F(\xi_p)^{\times p}}{F(\xi_p)^{\times p}}\right), &\mbox{ when }\textrm{char}(F) \neq p \mbox{ and } \xi_p \not\in F.
\end{array}\right.$$  Recall that in the latter case we write $\tau$ for the generator of $\Gal(F(\xi_p)/F)$, and that $\mathcal{T} \in \Z[\langle \tau\rangle]$. Because $\gcd(|\tau|,p)=1$, we have that $$(N_{F(\xi_{p^2})/F(\xi_p)}(\alpha))^\mathcal{T} = N_{F(\xi_{p^2})/F(\xi_p)}(\alpha^{\mathcal{T}})$$ for any $\alpha \in F(\xi_{p^2}^\times)$.  It follows that $\mathfrak{N}$ is the $t$-eigenspace for $\tau$ within $\frac{N_{F(\xi_{p^2})/F(\xi_p)}(F(\xi_{p^2})^\times)~F(\xi_p)^{\times p}}{F(\xi_p)^{\times p}}$.  The importance of this space is that it parameterizes those elements of $J(F)$ whose corresponding $G_1$-extensions admit a solution to the embedding problem $G_2 \twoheadrightarrow G_1$.

\begin{proposition}\label{prop:parameterizing.embeddable.extensions}
Suppose that $f \in J(F)$ corresponds to the $\Z/p$-extension $K/F$.  Then the embedding problem $G_2 \twoheadrightarrow G_1$ has a solution over $K/F$ if and only if $f \in \mathfrak{N}$.
\end{proposition}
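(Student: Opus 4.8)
The plan is to treat the three cases of the definition of $\mathfrak{N}$ in parallel, reducing each to the classical criterion for when a $\Z/p$-extension embeds in a $\Z/p^2$-extension. First recall the setup: $f \in J(F)$ corresponds to a $\Z/p$-extension $K/F$, and we want to show that $G_2 \twoheadrightarrow G_1$ is solvable over $K/F$ precisely when $f \in \mathfrak{N}$. The unifying principle is Albert's theorem (and its Artin--Schreier/Witt analogue in characteristic $p$): in the case $\xi_p \in F$, $K = F(\root{p}\of{a})$ embeds in a cyclic degree-$p^2$ extension if and only if $a$ is a norm from $F(\xi_{p^2})/F$; in characteristic $p$, every $\Z/p$-extension so embeds (Witt); and in the remaining case one descends the $\xi_p \in F$ criterion along $F(\xi_p)/F$ using the projector $\mathcal{T}$.

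First I would dispatch the characteristic $p$ case, which is immediate: $\mathfrak{N} = J(F)$ by definition, and Witt's theorem guarantees every $\Z/p$-extension embeds in a $\Z/p^2$-extension, so both sides of the ``if and only if'' are vacuously satisfied. Next I would handle the case $\xi_p \in F$. Here $f$ is the class of some $a \in F^\times$ and $K = F(\root{p}\of{a})$. I would invoke Albert's criterion: $K/F$ embeds in a $\Z/p^2$-extension if and only if $a \in N_{F(\xi_{p^2})/F}(F(\xi_{p^2})^\times)$. (If $\xi_{p^2} \in F$ already, then $F(\xi_{p^2}) = F$ and the norm group is all of $F^\times$, matching the fact that every $\Z/p$-extension embeds in that case.) Translating this into $J(F) = F^\times/F^{\times p}$ says exactly $f \in \mathfrak{N}$, with the $F^{\times p}$ in the numerator of $\mathfrak{N}$ accounting for the fact that the condition only depends on $a$ modulo $p$th powers (which one checks using that $p$th powers are norms). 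This gives the equivalence in the second case.

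The main work is the third case: $\textrm{char}(F) \neq p$ and $\xi_p \not\in F$. Here $J(F)$ is the $t$-eigenspace of $J(F(\xi_p)) = F(\xi_p)^\times/F(\xi_p)^{\times p}$, and $f$ corresponds to a $\Z/p$-extension $K/F$; over $F(\xi_p)$ the extension $K(\xi_p)/F(\xi_p)$ is $\Z/p$ and corresponds to the image of $f$ under the identification of $J(F)$ with the eigenspace. The strategy is: $K/F$ embeds in a $\Z/p^2$-extension over $F$ if and only if $K(\xi_p)/F(\xi_p)$ embeds in a $\Z/p^2$-extension over $F(\xi_p)$ \emph{compatibly with the $\Gal(F(\xi_p)/F)$-action} — and since $\gcd([F(\xi_p):F],p)=1$, any $\Z/p^2$-extension of $F(\xi_p)$ that is Galois over $F$ with the right quotient structure descends (this is the standard descent argument, and is where one uses the projector $\mathcal{T}$ to land in the correct eigenspace). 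Applying the case-two criterion over $F(\xi_p)$, the embedding exists iff the relevant class lies in $N_{F(\xi_{p^2})/F(\xi_p)}(F(\xi_{p^2})^\times)F(\xi_p)^{\times p}/F(\xi_p)^{\times p}$; intersecting with the $t$-eigenspace and using the displayed commutation $(N_{F(\xi_{p^2})/F(\xi_p)}(\alpha))^{\mathcal{T}} = N_{F(\xi_{p^2})/F(\xi_p)}(\alpha^{\mathcal{T}})$ shows this intersection is exactly $\mathcal{T}$ applied to that norm group, i.e. $\mathfrak{N}$.

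The step I expect to be the main obstacle is the descent argument in the third case: verifying that solvability of $G_2 \twoheadrightarrow G_1$ over $K/F$ is genuinely equivalent to solvability over $K(\xi_p)/F(\xi_p)$, and that the projector $\mathcal{T}$ exactly carries the $F(\xi_p)$-level norm condition to the eigenspace-level condition defining $\mathfrak{N}$. One must be careful that the $\Z/p^2$-extension produced over $F(\xi_p)$ can be chosen Galois over $F$ with the correct group (so that it descends to $F$), which is where the coprimality $\gcd([F(\xi_p):F],p)=1$ and the explicit form of $\mathcal{T}$ from equation (\ref{eq:projector.for.descent}) do the work. Everything else — the invocation of Albert and Witt, and the bookkeeping with $p$th powers — is routine once this descent is in place.
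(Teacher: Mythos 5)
Your proposal is correct and follows essentially the same route as the paper's (much terser) proof: the paper simply cites Bertrandias--Payan \cite[Thm.~1]{BP} for the case $\xi_p \in F$ (the norm criterion you attribute to Albert), notes that Witt's theorem makes the characteristic-$p$ case trivial, and says the remaining case ``follows by descent'' --- exactly the three-case structure and the descent-via-$\mathcal{T}$ mechanism you lay out in more detail.
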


\begin{proof}
This result is \cite[Thm.~1]{BP} if $\xi_p \in F$; when $\xi_p \not\in F$ and $\ch{F} \neq p$, the result follows by descent.  If $\ch{F}=p$ then the embedding problem $G_2 \twoheadrightarrow G_1$ is always solvable.  
\end{proof}

We are now prepared to give a generalization of Brattstr\"{o}m's result connecting $\nu(H_{p^3},F)$ and $\nu(M_{p^3},F)$.  In the statement of this theorem, we use $\binom{n}{m}_p$ for the $p$-binomial coefficient which counts the number of $m$-dimensional subspaces within an ambient $n$-dimensional $\F_p$-space.  It is a nice exercise in linear algebra to show that $$\binom{n}{m}_p = \frac{(p^n-1)\cdots (p^{n-m+1}-1)}{(p^m-1)\cdots (p-1)}$$ (see, e.g., \cite[Ch.~7]{KC}).  It is also worth remarking that by changing the prime $p$ to a variable $q$, one gets the quantum binomial coefficient ${n \brack m}_q$ introduced by Gauss.  Observe also that $\binom{n}{1}_p = \frac{p^n-1}{p-1}$.

\begin{theorem}\label{thm:generalizing.brattstrom} Let $p$ be an odd prime, and let $\mathfrak{N}$ be the subspace of $J(F)$ defined above.  Then
$$\nu(M_{p^3},F) = (p^2-1) \nu(H_{p^3},F) + \left(\binom{\dim_{\F_p}J(F)}{1}_p - \binom{\dim_{\F_p}\mathfrak{N}}{1}_p\right)\frac{|J(F)|}{p^2}.$$
\end{theorem}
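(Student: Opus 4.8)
The plan is to count pairs consisting of a $\Z/p$-extension $K/F$ together with a cyclic submodule $\langle \gamma \rangle \subseteq J(K)$ with $\ell(\gamma) = 2$, and to sort these according to whether $e(\gamma) = 0$ (giving an $H_{p^3}$-extension by Example~\ref{ex:hp3.and.mp3.in.module.language}) or $e(\gamma) \neq 0$ (giving an $M_{p^3}$-extension). First I would fix a $\Z/p$-extension $K/F$ corresponding to some $f \in J(F)$, and apply Proposition~\ref{prop:decomposition} to write $J(K) = \langle \chi \rangle \oplus \bigoplus_{i=0}^1 Y_i$ with $\ell(\chi) = p^{i(K/F)}+1 \in \{1,2\}$ and $e(\chi) \neq 0$, while $Y_0 \simeq \bigoplus_{\mathfrak{d}_0}\F_p$ and $Y_1 \simeq \bigoplus_{\mathfrak{d}_1}\F_p[G_1]$ with $Y_0 \subseteq \ker e$. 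The key dichotomy is exactly whether $i(K/F) = 0$ (so $\ell(\chi) = 2$) or $i(K/F) = -\infty$ (so $\ell(\chi) = 1$), and by Proposition~\ref{prop:parameterizing.embeddable.extensions} this is governed by whether $f \in \mathfrak{N}$ or not.

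The combinatorial heart is then a count, for a fixed such $K/F$, of the cyclic submodules $\langle \gamma \rangle$ of length $2$ with $e(\gamma) = 0$ and those with $e(\gamma) \neq 0$. An element $\gamma$ has $\ell(\gamma) = 2$ precisely when its image in $J(K)/(\sigma-1)J(K)$ — or more precisely when $(\sigma-1)\gamma \neq 0$ but $(\sigma-1)^2\gamma = 0$ — and one should count such $\gamma$ and then divide by the size of the set of generators of a fixed length-$2$ cyclic module (which is $|A_2^\times \text{-generators}| = p^2 - p$, the elements of $A_2$ of length exactly $2$). I would compute the number of length-$2$ elements of $J(K)$ lying in $\ker e$ versus outside $\ker e$ using the explicit decomposition: the length-$\le 2$ part of $J(K)$ is controlled by $\langle \chi \rangle$ together with $Y_0$ and the length-$\le 2$ truncation of $Y_1$, and $e$ restricted to the relevant finite subquotient is a nonzero linear functional (nonzero because $e(\chi) \neq 0$) exactly when $\ell(\chi) = 1$; when $\ell(\chi) = 2$ the picture shifts because $\chi$ itself has length $2$ with nontrivial index. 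Carrying this out carefully produces, in the case $f \notin \mathfrak{N}$, a relation of the shape $\nu(M_{p^3} \twoheadrightarrow G_1, K/F) = (p^2-1)\,\nu(H_{p^3} \twoheadrightarrow G_1, K/F) + (\text{something})$, and in the case $f \in \mathfrak{N}$ the clean relation $\nu(M_{p^3}\twoheadrightarrow G_1, K/F) = (p^2-1)\,\nu(H_{p^3}\twoheadrightarrow G_1,K/F)$; the discrepancy term, summed over the $\binom{\dim J(F)}{1}_p - \binom{\dim \mathfrak N}{1}_p$ extensions $K/F$ not arising from $\mathfrak N$, should total $\bigl(\binom{\dim J(F)}{1}_p - \binom{\dim \mathfrak N}{1}_p\bigr)\tfrac{|J(F)|}{p^2}$. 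The factor $|J(F)|/p^2$ is the count one expects for the number of $H_{p^3}$-type or $M_{p^3}$-type modules of a fixed ``free-like'' shape once the base extension is fixed, reflecting that $Y_1$ contributes a free $\F_p[G_1]$-summand of predictable rank.

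Two bookkeeping points need care. First, a single $H_{p^3}$- or $M_{p^3}$-extension $L/F$ determines its $\Z/p$-subextension: for $M_{p^3}$ this subextension is unique (as noted in Example~\ref{ex:hp3.and.mp3.in.module.language}), while $H_{p^3}$ has $p+1$ distinct $\Z/p$-subextensions, so $\nu(H_{p^3},F) = \tfrac{1}{p+1}\sum_{K/F} \nu(H_{p^3}\twoheadrightarrow G_1, K/F)$ whereas $\nu(M_{p^3},F) = \sum_{K/F}\nu(M_{p^3}\twoheadrightarrow G_1,K/F)$; I must reconcile these normalizations, and the factor $p+1$ should interact with the $p^2-1 = (p-1)(p+1)$ coefficient in a way that makes the identity come out clean — indeed per fixed $K/F$ one expects $\nu(M_{p^3}\twoheadrightarrow G_1,K/F)$ to be roughly $(p-1)(p+1)$ times $\nu(H_{p^3}\twoheadrightarrow G_1,K/F)/(p+1)\cdot(p+1)$, so tracking constants is essential. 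Second, I must handle the three cases in the definition of $\mathfrak{N}$ (namely $\xi_p \in F$, $\textrm{char}(F) = p$, and $\textrm{char}(F)\neq p$ with $\xi_p \notin F$) uniformly; the point is that Propositions~\ref{prop:decomposition}, \ref{prop:modules.as.galois.groups}, and \ref{prop:parameterizing.embeddable.extensions} are all stated uniformly in terms of $J(K)$ and $e$, so the argument should proceed identically once those inputs are invoked — when $\textrm{char}(F) = p$ we have $\mathfrak N = J(F)$, so the discrepancy term vanishes and we recover Brattstr\"om's identity, providing a good consistency check. The main obstacle I anticipate is the precise submodule count: establishing exactly how many cyclic submodules of a given $J(K)$ have $\ell = 2$ and $e = 0$ (resp. $e \neq 0$) requires carefully analyzing the length-$2$ truncation of the module together with the linear functional $e$, in particular tracking whether $\chi$ contributes length $1$ or length $2$, and extracting from this the exact count $|J(F)|/p^2$ for the ``extra'' $M_{p^3}$-extensions.
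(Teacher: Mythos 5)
Your plan follows essentially the same route as the paper: reduce to counting length-$2$ elements of $J(K)$ with trivial versus nontrivial index via Proposition \ref{prop:modules.as.galois.groups} (dividing by the $p^2-p$ generators of an $A_2$), use the element $\chi$ from Proposition \ref{prop:decomposition} to translate between the two counts by adding cosets $c\chi + \{\gamma \in \ker(e): \ell(\gamma)=2\}$, identify the extensions contributing a correction term as exactly those with $i(K/F)=0$ via Proposition \ref{prop:parameterizing.embeddable.extensions}, and reconcile the $p+1$ quotients of an $H_{p^3}$-extension against the unique $\Z/p$-quotient of an $M_{p^3}$-extension. One constant in your sketch is off: for a fixed $K/F$ with $f \in \mathfrak{N}$ the relation is $\nu(M_{p^3}\twoheadrightarrow G_1,K/F) = (p-1)\,\nu(H_{p^3}\twoheadrightarrow G_1,K/F)$, not $(p^2-1)$ times it; the extra factor of $p+1$ appears only after summing over all $K/F$, since each $H_{p^3}$-extension is counted $p+1$ times in $\sum_{K/F}\nu(H_{p^3}\twoheadrightarrow G_1,K/F)$ while each $M_{p^3}$-extension is counted once. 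With that fixed, the correction term comes out exactly as you predict: when $i(K/F)=0$ the additional length-$2$ elements of nontrivial index are the cosets $c\chi + \left(J(K)^G\cap\ker(e)\right)$, and since $J(K)^G\cap\ker(e)$ is the image of $J(F)$ and has order $|J(F)|/p$, each such extension contributes $\frac{p-1}{p^2-p}\cdot\frac{|J(F)|}{p} = \frac{|J(F)|}{p^2}$.
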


Before proving this result, we observe that when $\ch{F}\neq p$ then $|J(F)|<p^2$ is only possible when $p=2$ and $n=1$. Since we are focusing on the case $p>2$, the term $|J(F)|/p^2$ is therefore an integer in this case.  On the other hand, when $\ch{F}=p$ then $|J(F)|<p^2$ is possible, but in this case the term $\binom{\dim J(F)}{1}_p-\binom{\dim \mathfrak{N}}{1}_p$ vanishes.

\begin{proof}
Suppose that $K/F$ is a $G_1$-extension.  From Proposition \ref{prop:modules.as.galois.groups}, we know that there is a bijection $$\left\{\begin{array}{c}\F_p[G_1]-\mbox{submodules }\\M \subseteq J(K) \mbox{ with }\\M \simeq A_2\mbox{ and }M \subseteq \ker(e)\end{array}\right\} \leftrightarrow \left\{\begin{array}{c}\mbox{Solutions to}\\H_{p^3} \twoheadrightarrow G_1\\\mbox{over }K/F\end{array}\right\}.$$  Now since any given module $M \simeq A_2$ satisfies $|\{m \in M: \ell(m)=2\}|=p^2-p$, and since any element $m \in J(K) \cap \ker(e)$ with $\ell(m)=2$ generates a submodule corresponding to a solution to $H_{p^3} \twoheadrightarrow G_1$, we get 
$$\nu(H_{p^3} \twoheadrightarrow G_1,K/F) = \frac{1}{p^2-p} \left|\{\gamma \in J(K)\cap\ker(e): \ell(\gamma)=2\}\right|.$$  Since any $H_{p^3}$ extension has $p+1$ quotients isomorphic to $\Z/p$, we therefore conclude 
$$\nu(H_{p^3},F) = \frac{1}{p+1} \sum_{K/F} \frac{1}{p^2-p} \left|\{\gamma \in J(K) \cap \ker(e): \ell(\gamma)=2\}\right|.$$

Now we will consider $M_{p^3}$ extensions, so again let $K/F$ be a given $G_1$-extension.  We will fix an element $\chi$ as in Proposition \ref{prop:decomposition}.  According to Proposition \ref{prop:modules.as.galois.groups}, we know that there is a bijection $$\left\{\begin{array}{c}\F_p[G_1]-\mbox{submodules }\\M \subseteq J(K) \mbox{ with }\\M \simeq A_2\mbox{ and }M \not\subseteq \ker(e)\end{array}\right\} \leftrightarrow \left\{\begin{array}{c}\mbox{Solutions to}\\M_{p^3} \twoheadrightarrow G_1\\\mbox{over }K/F\end{array}\right\}.$$ Again, any module $M \simeq A_2$ satisfies $|\{m\in M: \ell(m)=2\}| = p^2-p$, and if the module satisfies $M \not\subseteq \ker(e)$ then it must be that any element from $\{m \in M: \ell(m)=2\}$ has $e(m)\neq 0$.  (One can see this in several ways, but here's an embedding problem argument: if there were an element with $\ell(m)=2$ and $e(m)=0$, then $M=\langle m\rangle$ would solve $H_{p^3} \twoheadrightarrow G_1$ instead of $M_{p^3} \twoheadrightarrow G_1$.) Hence we conclude that 
$$\nu(M_{p^3} \twoheadrightarrow G_1,K/F) = \frac{1}{p^2-p} \left|\{\alpha \in J(K): \alpha\not\in \ker(e), \ell(\alpha)=2\}\right|.$$ We claim that $\{\alpha \in J(K): \alpha\not\in \ker(e), \ell(\alpha)=2\}$ is equal to
\begin{equation}\label{eq:expressing.nontrivial.elements.of.length.2}\left\{\begin{array}{ll}\displaystyle\bigcup_{c=1}^{p-1} c\chi+ \{\gamma \in J(K) \cap \ker(e): \ell(\gamma)=2\},&\mbox{ if }\ell(\chi)=1\\[20pt]\displaystyle\bigcup_{c=1}^{p-1} c\chi+ \{\gamma \in J(K) \cap \ker(e): \ell(\gamma)=2\} \cup \bigcup_{c=1}^{p-1}c\chi+ \left( J(K)^G \cap \ker(e)\right),&\mbox{ if }\ell(\chi)=2.\end{array}\right.
\end{equation} It will be convenient to translate these two conditions into equivalent statements: in the language of Proposition \ref{prop:decomposition}, the condition $\ell(\chi)=1$ is equivalent to $i(K/F) = -\infty$, whereas in the language of embedding problem this condition says that $G_2 \twoheadrightarrow G_1$ is solvable over $K/F$.  On the other hand, $\ell(\chi)=2$ translates to $i(K/F)=0$ in the language of Proposition \ref{prop:decomposition}, or to the embedding problem statement that $G_2 \twoheadrightarrow G_1$ does not have a solution over $K/F$.

If we assume the equality of sets from equation (\ref{eq:expressing.nontrivial.elements.of.length.2}) for the time being, then since any given $M_{p^3}$-extension has a unique $\Z/p$-quotient, it will follow that
\begin{align*}
\nu(M_{p^3},F) &= \sum_{K/F} \nu(M_{p^3} \twoheadrightarrow G_1,K/F)\\
&=\sum_{K/F} \frac{1}{p^2-p} |\{\alpha \in J(K): \alpha \not\in\ker(e), \ell(\alpha)=2\}|\\
&=\frac{p-1}{p^2-p}\left(\sum_{K/F} |\{\gamma \in J(K)\cap \ker(e): \ell(\gamma)=2\}| +  \sum_{i(K/F)=0} |J(K)^G \cap \ker(e)|\right)\\
&=(p^2-1) \nu(H_{p^3},F) + \frac{1}{p} \sum_{i(K/F)=0} |J(K)^G \cap \ker(e)|.
\end{align*} 
We connect this expression to the desired formula for $\nu(M_{p^3},F)$ by considering two cases.  First, suppose $\ch{F}=p$.  In this case we have $i(K/F)=-\infty$ for any $G_1$-extension $K/F$, and hence the latter sum is empty.  But in this case note that the second summand from the desired formula for $\nu(M_{p^3},F)$ also vanishes since $J(F)=\mathfrak{N}$ in this case.  Hence we have the desired result in this case.  (Note: when $\xi_{p^2}\in F$ we also have $\mathfrak{N} = J(F)$ so that the latter term vanishes; this proves the other case of Brattstr\"{o}m's result.)

Now suppose that $\ch{F}\neq p$.  If $K/F$ is a $\Z/p$-extension, let $\iota: J(F) \to J(K)$ be the map induced by the natural inclusion; from \cite[Lemma 8]{MSS1} we have that $J(K)^G \cap \ker(e) = \iota(J(F))$, and from Kummer Theory we have that $|\iota(J(F))| = |J(F)|/p$.  Hence we can continue our chain of equalities by writing
$$\nu(M_{p^3},F) = (p^2-1) \nu(H_{p^3},F) + \frac{|J(F)|}{p^2} \left|\{K/F: G_2 \twoheadrightarrow G_1 \mbox{ is not solvable}\}\right|.$$  The only thing left to argue, then, is that the number of extensions $K/F$ for which $G_2 \twoheadrightarrow G_1$ is not solvable is given by $\binom{\dim J(F)}{1}_p-\binom{\dim \mathfrak{N}}{1}_p$, though this is straightforward given Proposition \ref{prop:parameterizing.embeddable.extensions}: $\binom{\dim J(F)}{1}_p$ counts all $\Z/p$-extensions, and $\binom{\dim \mathfrak{N}}{1}_p$ counts all $\Z/p$-extensions which  admit a solution to the embedding problem $G_2 \twoheadrightarrow G_1$.

To finish the proof, then, we simply need to show that $\{\alpha \in J(K): \alpha \not\in \ker(e), \ell(\alpha) = 2\}$ is equal to the expression from equation (\ref{eq:expressing.nontrivial.elements.of.length.2}).  
Suppose first that $\ell(\chi)=1$.  Let $\alpha \in J(K)$ be given which satisfies $e(\alpha) \neq 0$ and $\ell(\alpha)=2$. Then there exists some $c \in \F_p^\times$ so that $\alpha -c\chi \in \ker(e)$, and the ultrametric property gives $\ell(\alpha -c\chi) = 2$. This gives one containment.  For the other, if $\gamma \in J(K) \cap \ker(e)$ and $\ell(\gamma)=2$, then the ultrametric property tells us that for any $c \in \F_p^\times$ we have $\ell(c\chi+ \gamma)=2$, and of course $e(c\chi+ \gamma) \neq 0$.

Now suppose that $\ell(\chi)=2$.  Let $\alpha \in J(K)$ be given which satisfies $e(\alpha) \neq 0$ and $\ell(\alpha)=2$. Then there exists some $c \in \F_p^\times$ so that $\alpha -c \chi \in \ker(e)$, though this time the ultrametric property only gives $\ell(\alpha -c\chi) \leq 2$. Hence either $\alpha -c\chi$ has length $2$ or $\alpha -c\chi \in J(K)^G$.  This proves one containment.  For the other, the ultrametric property makes it clear that if $\gamma \in J(K)^G$ and $e(\gamma)=0$, then for any $c \in \F_p^\times$ we have that $e(c\chi+\gamma)\neq 0$ and $\ell(c\chi+\gamma)=2$.  We claim that for $\gamma \in \ker(e)$ with $\ell(\gamma) = 2$, it is still true that for all $c \in \F_p^\times$ we have $e(c\chi+\gamma) \neq 0$ and $\ell(c\chi+\gamma)=2$.  The first statement is clear; for the second, note that if $\ell(c\chi+\gamma) <2$ then this implies that $c\chi+ \gamma$ is an element of nontrivial index with length $1$, a contradiction to the fact that $\chi$ is an element of nontrivial index with minimal length.
\end{proof}

Notice that in the previous theorem we had to be careful in using our methodology to enumerate extensions since our modules naturally parameterize solutions to embedding problems over a given $\Z/p$-extension $K/F$.  In the case of $H_{p^3}$ extensions of $F$, we had to account for the fact that a given $H_{p^3}$ extension of $F$ solves embedding problems over $p+1$ distinct $\Z/p$-extension of $F$.  To adopt the methodology of the previous theorem to a broader class of groups, we will be interested in determining when groups of the form $A_i \rtimes G_n$ or $A_i \bullet G_n$ have precisely one normal subgroup which is elementary $p$-abelian and whose quotient is isomorphic $\Z/p^n$.  

\begin{lemma}\label{le:different.extensions.for.long.modules}
Suppose that $\Gal(L/F) \simeq A_i \rtimes G_n$ for $i \geq p^{n-1}+2$.  Then there is a unique intermediate $\Z/p^n$-extension $K/F$ so that $L/K$ is elementary $p$-abelian.  Likewise if $\Gal(L/F) \simeq A_i \bullet G_n$ for $i \geq p^{n-1}+1$, then there is a unique intermediate $\Z/p^n$-extension $K/F$ so that $L/K$ is elementary $p$-abelian.  
\end{lemma}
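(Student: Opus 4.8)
The plan is to reduce the statement to a question about subgroups of the two $p$-groups involved, and then settle that by direct computation inside $\F_p[G_n]\simeq\Fp[t]/\langle t^{p^n}\rangle$. By the Galois correspondence, an intermediate field $K$ with $F\subseteq K\subseteq L$, $\Gal(K/F)\simeq\Z/p^n$, and $L/K$ elementary $p$-abelian is the same as a normal subgroup $N\trianglelefteq\Gal(L/F)$ that is elementary abelian with $\Gal(L/F)/N\simeq\Z/p^n$. So, writing $G$ for $A_i\rtimes G_n$ (resp.\ $A_i\bullet G_n$) and identifying $A_i$ with $A_i\times\{1\}\leq G$, it suffices to prove that $A_i$ is the unique such $N$. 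That $A_i$ itself qualifies is immediate: it is an $\Fp$-vector space (hence elementary abelian), it is the kernel of $G\to G_n$ (hence normal), and $G/A_i\simeq G_n\simeq\Z/p^n$.

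Now let $N$ be any subgroup as above. Since $G/N$ is cyclic we have $[G,G]\subseteq N$, and the commutator identity $[(f,1),(0,\sigma)]=((1-\sigma)f,1)$ --- valid in both $G$'s, since the twist in $A_i\bullet G_n$ drops out --- shows $(\sigma-1)A_i\times\{1\}\subseteq[G,G]\subseteq N$. Because $[A_i:(\sigma-1)A_i]=p$, the subgroup $N\cap A_i$ of $A_i$ is either $A_i$, in which case $|N|=|A_i|=p^i$ forces $N=A_i$ and we are done; or it is $(\sigma-1)A_i$. In the latter case $NA_i/A_i\simeq N/(N\cap A_i)$ has order $p$, so it is the unique order-$p$ subgroup of $G/A_i\simeq G_n$, namely $\langle\sigma^{p^{n-1}}\rangle A_i/A_i$; hence $N$ contains an element $v=(f_0,\sigma^{p^{n-1}})$ for some $f_0\in A_i$, and (counting orders, noting $v^p\in(\sigma-1)A_i$) we get $N=\langle(\sigma-1)A_i,v\rangle$. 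The goal then becomes to show this $N$ is not elementary abelian.

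The main computation is the conjugation action of $v$ on $(\sigma-1)A_i$. A short calculation --- in which the correction term $(\sigma-1)^{i-1}$ of $A_i\bullet G_n$ again cancels --- gives $v\,((\sigma-1)g,1)\,v^{-1}=((\sigma-1)\sigma^{p^{n-1}}g,1)$ for all $g\in A_i$. Since $\sigma^{p^{n-1}}-1=(\sigma-1)^{p^{n-1}}$ in characteristic $p$, this shows $v$ centralizes $(\sigma-1)A_i$ exactly when $(\sigma-1)^{p^{n-1}+1}$ annihilates $A_i$, i.e.\ exactly when $i\leq p^{n-1}+1$. In the split case the hypothesis $i\geq p^{n-1}+2$ makes this fail, so $N$ is non-abelian and we are done. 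In the non-split case $i\geq p^{n-1}+1$, the only remaining possibility is $i=p^{n-1}+1$; then $v$ does centralize $(\sigma-1)A_i$, so $N$ is abelian, but one computes $v^p=\bigl((\sigma-1)^{p^n-p^{n-1}}f_0+(\sigma-1)^{i-1},1\bigr)$ (using the hockey-stick evaluation $1+\rho+\cdots+\rho^{p-1}\equiv(\rho-1)^{p-1}\bmod p$ with $\rho=\sigma^{p^{n-1}}$), and since $p^n-p^{n-1}\geq p^{n-1}+1=i$ when $p\geq3$ while $(\sigma-1)^{i-1}\neq0$ in $A_i$, this reduces to $((\sigma-1)^{i-1},1)\neq1$. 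So $v$ has order $p^2$ and $N$ is not elementary abelian. In every case $N=A_i$, as claimed.

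I expect the main obstacle to be the explicit arithmetic in the non-split group $A_i\bullet G_n$: one must follow the twist term $(\sigma-1)^{i-1}$ through the conjugation $v\,(\cdot)\,v^{-1}$, where it cancels, and through the $p$-th power of $v$, where it survives; it is exactly this surviving term that makes the sharp bound for $A_i\bullet G_n$ one smaller than the bound $i\geq p^{n-1}+2$ required in the split case. Everything else --- the Galois correspondence and the reductions to comparing exponents of $t$ in $\Fp[t]/\langle t^{p^n}\rangle$ --- is routine.
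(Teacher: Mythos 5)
Your proof is correct, and it is organized differently from the paper's. The paper argues by contradiction from a hypothetical second elementary $p$-abelian normal subgroup $H$ with cyclic quotient: it selects $(f,\sigma^j)\in H\setminus A_i$ with $\ell(f)$ maximal, uses normality and the commutativity of $H$ to force $\ell(f)\leq p^{n-1}+1$, and then counts orders to contradict $|H|=p^i$; for $A_i\bullet G_n$ it first shows $\ell(f)>p^n-p^{n-1}$ via the $p$-th power formula and then derives non-commutativity from a double commutator, with the case $p=3$, $n=1$ handled separately. Your route instead observes at the outset that a cyclic quotient forces $[G,G]\subseteq N$, hence $(\sigma-1)A_i\subseteq N$, and the index-$p$ containment $(\sigma-1)A_i\subseteq N\cap A_i\subseteq A_i$ pins $N$ down to either $A_i$ itself or $\langle(\sigma-1)A_i,\,v\rangle$ with $v=(f_0,\sigma^{p^{n-1}})$; the whole lemma then reduces to the two computations $v((\sigma-1)g,1)v^{-1}=((\sigma-1)\sigma^{p^{n-1}}g,1)$ and $v^p=((\sigma-1)^{p^n-p^{n-1}}f_0+(\sigma-1)^{i-1},1)$. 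This buys a cleaner and more uniform argument: no maximality selection, no order-counting contradiction, a transparent explanation of why the bound for $A_i\bullet G_n$ is one smaller than for $A_i\rtimes G_n$ (the surviving twist term in $v^p$ kills the borderline case $i=p^{n-1}+1$), and no separate treatment of $p=3$, $n=1$, since there $i=p^{n-1}+1=2$ falls into your $v^p\neq 1$ case and $i=3=p^{n-1}+2$ into your non-abelian case. The only presentational wrinkle is that you invoke $N=\langle(\sigma-1)A_i,v\rangle$ before stating the conjugation formula that shows $v$ normalizes $(\sigma-1)A_i$ (which the order count needs); reordering those two sentences would make the write-up airtight.
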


\begin{proof}In each case, it is obvious that $T=\{(f,1):f \in A_i\}$ is a normal subgroup which is elementary $p$-abelian.  Equally clear is that the fixed field is a $\Z/p^n$-extension.  Before proceeding, we observe that the collection of elements in $A_i \rtimes G_n$ with order $p$ are those of the form $(f,\sigma^j)$ where $p^{n-1} \mid j$.

For the sake of contradiction, suppose that $H \neq T$ is an elementary $p$-abelian normal subgroup of $A_i \rtimes G_n$ with quotient $\Z/p^n$; this forces $|H| = p^i$ and $H \setminus T \neq \emptyset$.  Choose $(f,\sigma^j) \in H \setminus T$ so that $\ell(f)$ is maximal amongst elements within $H\setminus T$.  After taking a suitable power of $(f,\sigma^j)$ if necessary, we can assume that our element is $(f,\sigma^{p^{n-1}})$.  (Note that $(f,\sigma^j)^t = \left(\sum_{i=0}^{t-1} \sigma^{it} f,\sigma^{jt}\right)$, and that $\sum_{k=0}^{t-1} \sigma^{jk}$ is a unit in $\F_p[G]$ for $1\leq t \leq p-1$, so that $\ell(f) = \ell\left(\sum_{k=0}^{t-1} \sigma^{jk} f\right)$.)

Since $H$ is normal we know that the commutator $[(0,\sigma),(f,\sigma^j)] = ((\sigma-1) f,1)$ is an element of $H$; repeating this procedure shows  $\{(g,1): \ell(g)<\ell(f)\} \subseteq H$.  By the maximality of $(f,\sigma^{p^{n-1}})$, there exists no $(g,\sigma^k) \in H$ with $k \neq p^n$ and $\ell(g)>\ell(f)$.  We argue that there are also no elements $(g,1) \in H$ with $\ell(g)>\ell(f)$; otherwise $(h,1)(f,\sigma^{p^{n-1}}) = (h+f,\sigma^{p^{n-1}}) \in H$, and the ultrametric property gives $\ell(h+f)=\ell(h)$, violating the maximality condition defining $f$.  Notice also that since $((\sigma-1)f,1) \in H$ and $H$ is assumed abelian, we must have
$$(f + (\sigma-1)f,\sigma^{p^{n-1}}) = ((\sigma-1)f,1)(f,\sigma^{p^{n-1}}) = (f,\sigma^{p^{n-1}})((\sigma-1)f,1) = (f + \sigma^{p^{n-1}}(\sigma-1)f,\sigma^{p^{n-1}}).$$ It therefore follows that $(\sigma-1)f$ is fixed by $\sigma^{p^{n-1}}$, and so $\ell(f) \leq p^{n-1}+1$.

Now suppose that $H$ contains no elements of the form $(g,\sigma^k)$ with $k \neq p^n$ and $\ell(g)<\ell(f)$.  Then we have
$$H = \left \langle (f,\sigma^{p^{n-1}}),((\sigma-1)f,1),((\sigma-1)^2f,1),\cdots,((\sigma-1)^{\ell(f)-1}f,1)\right\rangle_{\F_p},$$ and hence $|H| = p^{\ell(f)} \leq p^{p^{n-1}+1} < p^i$, a contradiction.

On the other hand, suppose that $H$ does contain an element $(g,\sigma^k)$ with $k \neq p^n$ and $\ell(g)<\ell(f)$.  One can argue that $(g,\sigma^k)$ can be chosen to take the form $(g,\sigma^{p^{n-1}})$ as before, and since we have already established $(g,1) \in H$, we conclude that $(0,\sigma^{p^{n-1}}) \in H$. This gives $(f,\sigma^{p^{n-1}})(0,\sigma^{p^{n-1}})^{-1} = (f,1) \in H$ as well.  The commutivity of $H$ implies $$ (f,\sigma^{p^{n-1}}) = (f,1)(0,\sigma^{p^{n-1}}) = (0,\sigma^{p^{n-1}})(f,1) = (\sigma^{p^{n-1}} f,\sigma^j).$$ We conclude that $f$ is fixed by the action of $\sigma^{p^{n-1}}$, so that in fact $\ell(f) \leq p^{n-1}$ in this case.  But then $$H = \left \langle(f,1),((\sigma-1)f,1),\cdots,((\sigma-1)^{\ell(f)-1}f,1), (0,\sigma^{p^{n-1}})\right\rangle_{\F_p},$$ contradicting the fact that $i \geq p^{n-1}+2$ in this case.  This completes the proof for $A_i \rtimes G_n$.


Now suppose that $H \neq T$ is an elementary $p$-abelian normal subgroup of $A_i \bullet G_n$ with quotient $\Z/p^n$; this forces $|H| = p^i$.  Again, we will choose $(f,\sigma^j) \in H \setminus T$ so that $\ell(f)$ is maximal amongst elements within $H \setminus T$, and we again observe that we can assume that this element is of the form $(f,\sigma^{p^{n-1}})$.  We begin our argument in this case by claiming that $\ell(f) > p^n-p^{n-1}$ is necessary.  To do so, we first establish some notation.  For an integer $a$, we write $\overline{a}$ for the least non-negative residue of $a$ modulo $p^n$.  For $m \in \N$, we then define $c_j(m)$ inductively: $c_j(1)= 0$, and $$c_{j}(m+1) = \left\{ \begin{array}{ll}c_j(m)&\mbox{ if }\overline{mj}+j<p^n\\c_j(m)+1&\mbox{ if }\overline{mj}+j \geq p^n.\end{array}\right.$$  Then one can use induction to show that $$(f,\sigma^{p^{n-1}})^m = \left(\sum_{k=0}^{m-1} \sigma^{kp^{n-1}} f + c_{p^{n-1}}(m) (\sigma-1)^{i-1},\sigma^{mp^{n-1}}\right).$$ Take $m=p$ and observe that $1 \leq c_{p^{n-1}}(p)<p$ (the first inequality follows because $p^{n-1}p \geq p^n$, and the latter because $c_{p^{n-1}}(1)=0$ and $c_{p^{n-1}}(m+1)-c_{p^{n-1}}(m) \leq 1$ for all $m$).  If $\ell(f)\leq p^{n-1}(p-1)$ this means that \begin{equation*}
\begin{split}(f,\sigma^{p^{n-1}})^p &= \left(\sum_{k=0}^{p-1} \sigma^{k{p^{n-1}}}f+c_{p^{n-1}}(p)(\sigma-1)^{i-1},1\right) \\&= \left((\sigma^{p^{n-1}}-1)^{p-1}f + c_{p^{n-1}}(p)(\sigma-1)^{i-1},1\right) = (c_{p^{n-1}}(p)(\sigma-1)^{i-1},1) \neq (0,1),\end{split}\end{equation*} contrary to the assumption that $H$ is elementary $p$-abelian.

Now observe that $[(0,\sigma),(f,\sigma^{p^{n-1}})] \in H$, and hence should commute with $(f,\sigma^{p^{n-1}})$.  But a computation reveals
$$[(f,\sigma^{p^{n-1}}),[(0,\sigma),(f,\sigma^{p^{n-1}})]] = ((\sigma-1)(\sigma^{p^{n-1}}-1)f,1).$$ Provided either $p>3$ or $n=1$, this element is nontrivial because $\ell(f) \geq p^n-p^{n-1} > p^{n-1}+1$, and so $H$ is not commutative. 

We will handle the remaining case $p=3$ and $n=1$ directly.  We have the group $A_i \bullet \Z/3\Z$, where $2 \leq i \leq 3$, and we want to show this group has a unique $\Z/p$-quotient whose corresponding normal subgroup is $\Z/p \times \Z/p$.  When $i=2$ then the group is simply $M_{3^3}$, and we already know the result for this group.  When $i=3$ then $A_i \bullet \Z/3 \simeq A_i \rtimes \Z/3$, and we have already established the desired result from the first part of this theorem.
\end{proof}

\begin{theorem}
For an extension $K/F \simeq \Z/p^n$ and $p^{n-1}+2 \leq i < p^n$, $$\nu(A_i \bullet G_n \twoheadrightarrow G_n,K/F) = (p-1)\nu(A_i \rtimes G_n \twoheadrightarrow G_n,K/F).$$
Moreover, for a field $F$ and $p^{n-1}+2 \leq i < p^n$, $$\nu(A_i \bullet G_n,F) = (p-1) \nu(A_i \rtimes G_n,F).$$
\end{theorem}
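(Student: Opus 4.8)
The plan is to prove the relative statement first --- the comparison of embedding-problem solutions over a fixed $\Z/p^n$-extension $K/F$ --- and then to deduce the absolute statement by summing over all such $K/F$.

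\emph{Reducing the relative statement to a count inside $J(K)$.} Fixing $K/F$ with $\Gal(K/F)\simeq G_n$, I would invoke Proposition \ref{prop:modules.as.galois.groups}: solutions to $A_i\rtimes G_n\twoheadrightarrow G_n$ over $K/F$ correspond to cyclic submodules $\langle\gamma\rangle\subseteq J(K)$ with $\ell(\gamma)=i$ and $e(\gamma)=0$, while solutions to $A_i\bullet G_n\twoheadrightarrow G_n$ correspond to those with $\ell(\gamma)=i$ and $e(\gamma)\neq 0$. Since $i<p^n$, a submodule with $\ell(\gamma)=i$ is isomorphic to $A_i$, hence is generated by exactly $p^i-p^{i-1}$ of its elements (those outside its radical $(\sigma-1)\langle\gamma\rangle$); and because $e$ is $\F_p$-linear with $e(\sigma\delta)=e(\delta)$, all generators of a fixed $\langle\gamma\rangle$ share the vanishing or nonvanishing of $e$ (a generator is $u\gamma$ for a unit $u\in\F_p[G_n]$, and $e(u\gamma)$ is the augmentation of $u$ times $e(\gamma)$). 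Writing $S_0=\{\gamma\in J(K):\ell(\gamma)=i,\ e(\gamma)=0\}$ and $S_{\neq 0}=\{\gamma\in J(K):\ell(\gamma)=i,\ e(\gamma)\neq 0\}$, this gives $\nu(A_i\rtimes G_n\twoheadrightarrow G_n,K/F)=|S_0|/(p^i-p^{i-1})$ and $\nu(A_i\bullet G_n\twoheadrightarrow G_n,K/F)=|S_{\neq 0}|/(p^i-p^{i-1})$, so the relative statement reduces to the identity $|S_{\neq 0}|=(p-1)\,|S_0|$.

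\emph{The bijection.} Here I would use the distinguished element $\chi\in J(K)$ of Proposition \ref{prop:decomposition}, which satisfies $e(\chi)\neq 0$ and $\ell(\chi)=p^{i(K/F)}+1\leq p^{n-1}+1$. The hypothesis $i\geq p^{n-1}+2$ is exactly what forces $\ell(\chi)<i$, and this is the crux. I would then verify that
$$S_0\times\F_p^\times\longrightarrow S_{\neq 0},\qquad (\gamma,c)\longmapsto\gamma+c\chi,$$
is a bijection: it is well defined since $e(\gamma+c\chi)=e(\gamma)+c\,e(\chi)=c\,e(\chi)\neq 0$ and, as $\ell(c\chi)=\ell(\chi)<i=\ell(\gamma)$, the ultrametric property forces $\ell(\gamma+c\chi)=i$ with no drop; it is injective because applying $e$ to $\gamma+c\chi=\gamma'+c'\chi$ gives $(c-c')\,e(\chi)=0$, hence $c=c'$ and $\gamma=\gamma'$; and it is surjective since for $\alpha\in S_{\neq 0}$ one recovers $c=e(\alpha)/e(\chi)\in\F_p^\times$ and $\gamma=\alpha-c\chi\in S_0$, again using $\ell(\chi)<i=\ell(\alpha)$ to keep $\ell(\gamma)=i$. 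This would give $|S_{\neq 0}|=(p-1)\,|S_0|$, hence the relative statement.

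\emph{From relative to absolute.} Finally I would sum the relative identity over all $\Z/p^n$-extensions $K/F$. The point is Lemma \ref{le:different.extensions.for.long.modules}: since $i\geq p^{n-1}+2$, every extension $L/F$ with $\Gal(L/F)\simeq A_i\rtimes G_n$ (respectively $\simeq A_i\bullet G_n$) contains a \emph{unique} intermediate $\Z/p^n$-extension $K/F$ for which $L/K$ is elementary $p$-abelian, so $L$ is counted exactly once in $\sum_{K/F}\nu(A_i\rtimes G_n\twoheadrightarrow G_n,K/F)$ (respectively $\sum_{K/F}\nu(A_i\bullet G_n\twoheadrightarrow G_n,K/F)$); that is, $\nu(A_i\rtimes G_n,F)=\sum_{K/F}\nu(A_i\rtimes G_n\twoheadrightarrow G_n,K/F)$ and likewise for $A_i\bullet G_n$. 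Applying the relative identity term by term yields $\nu(A_i\bullet G_n,F)=(p-1)\,\nu(A_i\rtimes G_n,F)$. I expect the main obstacle to be not a single deep step but the bookkeeping in the reduction: confirming that index vanishing is genuinely a property of the submodule (so that ``divide by $p^i-p^{i-1}$'' is legitimate) and checking that it is precisely $i\geq p^{n-1}+2$, rather than the $K$-dependent inequality $i>\ell(\chi)$, that makes the bijection go through uniformly over all $K/F$.
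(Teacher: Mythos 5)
Your proposal is correct and follows essentially the same route as the paper: reduce to counting elements of $J(K)$ with $\ell(\gamma)=i$ and prescribed index via Proposition \ref{prop:modules.as.galois.groups}, compare the two counts by translating by $c\chi$ (using $\ell(\chi)=p^{i(K/F)}+1\leq p^{n-1}+1<i$ and the ultrametric property), and pass to the absolute statement by summing over $K/F$ via Lemma \ref{le:different.extensions.for.long.modules}. Your explicit bijection $S_0\times\F_p^\times\to S_{\neq 0}$ and the check that index-vanishing is generator-independent are just slightly more detailed renderings of the paper's set-equality argument.
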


\begin{proof}
We begin by noting that Lemma \ref{le:different.extensions.for.long.modules} tells us that the second statement follows from the first, since a given $A_i \rtimes G_n$ or $A_i \bullet G_n$ extension of $F$ has a unique $\Z/p^n$-subextension $K/F$ such that $L/K$ is elementary $p$-abelian.  Hence such an extension is parameterized uniquely as a module within $J(K)$, and so we have
$$\nu(A_i \rtimes G_n,F) = \sum_{\Gal(K/F)\simeq G_n} \nu(A_i \rtimes G_n \twoheadrightarrow G_n,K/F)$$ and likewise for the non-semidirect product.  Hence, we will focus on proving the first result.

To do so, we note that using Proposition \ref{prop:modules.as.galois.groups} and that fact that any cyclic module of dimension $i$ has $p^i-p^{i-1}$ many generators, one has
\begin{align*}
\nu(A_i\rtimes G_n\twoheadrightarrow G_n,K/F) &= \frac{1}{p^i-p^{i-1}}\left|\{\gamma \in J(K): \gamma \in \ker(e), \ell(\gamma)=i\}\right|\\
\nu(A_i\bullet G_n\twoheadrightarrow G_n,K/F) &= \frac{1}{p^i-p^{i-1}}\left|\{\alpha \in J(K): \alpha \not\in \ker(e), \ell(\alpha)=i\}\right|.
\end{align*}

The desired result will follow if we can show that, for the element $\chi \in J(K)$ from Proposition \ref{prop:decomposition}, one has
$$\{\alpha \in J(K):\alpha \not\in\ker(e), \ell(\alpha)=i\} = \bigcup_{c=1}^{p-1} c \chi + \{\gamma \in J(K): \gamma \in \ker(e),\ell(\gamma)=i\}.$$  For this, note that $\ell(\chi)=p^{i(E/F)}+1 < i$, and so if $\ell(\gamma)= i$ then the ultrametric property gives $\ell(c\chi + \gamma)=i$.  Of course if $\gamma\in \ker(e)$ as well, then $e(c\chi+ \gamma) \neq 0$.  This gives one containment.  For the other, note that if $\alpha \not\in\ker(e)$ then there exists some $c \in \F_p^\times$ so that $\alpha-c\chi \in \ker(e)$; when $\ell(\alpha)=i$, the ultrametric property again gives $\ell(\alpha-c\chi)=i$, and hence $\alpha =c\chi + \alpha -c \chi \in c\chi + \{\gamma \in J(K): \gamma \in \ker(e),\ell(\gamma)=i\}$.
\end{proof}

If one is willing to settle for counting only solutions to embedding problems, one can extend these same ideas to express $\nu(A_\ell \rtimes G_1 \twoheadrightarrow G_1,K/F)$ in terms of $\nu(H_{p^3} \twoheadrightarrow G_1,K/F)$ and $\nu(\Z/p \times \Z/p \twoheadrightarrow \Z/p,K/F)$.

\begin{theorem}
For $2 \leq \ell \leq p-1$ and a $\Z/p$-extension $K/F$, we have
\begin{align*}
\nu(A_\ell \rtimes \Z/p &\twoheadrightarrow\Z/p,K/F) \\&= \nu(H_{p^3} \twoheadrightarrow \Z/p,K/F) \left(\frac{1}{p}+\frac{(p-1)\nu(H_{p^3}\twoheadrightarrow \Z/p,K/F)}{1+(p-1)\nu(\Z/p\times\Z/p\twoheadrightarrow\Z/p,K/F)}\right)^{\ell-2}.
\end{align*}
\end{theorem}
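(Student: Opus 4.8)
The plan is to recast all three quantities in the statement as counts of elements of prescribed length inside the single $\F_p[G_1]$-module $\ker(e)\subseteq J(K)$, and then to extract the identity from the module structure of $\ker(e)$. First I would observe, using the identifications $\Z/p\times\Z/p\simeq A_1\rtimes G_1$, $H_{p^3}\simeq A_2\rtimes G_1$, and (for $2\le\ell\le p-1$) the group $A_\ell\rtimes G_1$, that by Proposition~\ref{prop:modules.as.galois.groups} each of the three embedding problems in the theorem is solved over $K/F$ exactly by the cyclic submodules $\langle\gamma\rangle\subseteq J(K)$ with $\ell(\gamma)$ equal to the relevant index ($1$, $2$, or $\ell$) and $e(\gamma)=0$. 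Since the index function is additive (a fact used repeatedly in the preceding proofs), $\ker(e)$ is an $\F_p[G_1]$-submodule of $J(K)$, the condition $e(\gamma)=0$ is equivalent to $\langle\gamma\rangle\subseteq\ker(e)$, and an element of length $j$ generates a copy of $A_j$, which has $p^j-p^{j-1}$ generators. Writing $a_j=|\{\gamma\in\ker(e):\ell(\gamma)=j\}|$ I therefore get $\nu(A_\ell\rtimes\Z/p\twoheadrightarrow\Z/p,K/F)=a_\ell/(p^\ell-p^{\ell-1})$, $\nu(H_{p^3}\twoheadrightarrow\Z/p,K/F)=a_2/(p^2-p)$, and $\nu(\Z/p\times\Z/p\twoheadrightarrow\Z/p,K/F)=a_1/(p-1)$, so the theorem reduces to a numerical identity among $a_1$, $a_2$, and $a_\ell$.

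Next I would pin down the structure of $\ker(e)$ via Proposition~\ref{prop:decomposition} with $n=1$: $J(K)=\langle\chi\rangle\oplus Y_0\oplus Y_1$ with $Y_0\simeq A_1^{\mathfrak{d}_0}$ (since $G_0$ is trivial), $Y_1\simeq\F_p[G_1]^{\mathfrak{d}_1}\simeq A_p^{\mathfrak{d}_1}$, $e(\chi)\ne 0$, $Y_0\subseteq\ker(e)$, and $\ell(\chi)\in\{1,2\}$ (as $i(K/F)\in\{-\infty,0\}$). The domain of $e$, namely $J(K)\cap\ker((\sigma-1)^{p-1})$, equals $\langle\chi\rangle\oplus Y_0\oplus(\sigma-1)Y_1$: the first two summands already lie in it because $\ell(\chi)\le 2\le p-1$ and $Y_0$ is $\sigma$-fixed, while $Y_1\cap\ker((\sigma-1)^{p-1})=(\sigma-1)Y_1$. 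On this domain $e$ is a nonzero $\F_p$-linear functional which vanishes on $Y_0$ and on $(\sigma-1)J(K)$ --- the latter because $N_{K/F}$ (respectively $Tr_{K/F}$) is Galois-invariant, so the index of an element of the form $(\sigma-1)\gamma$ vanishes, a computation already implicit in the excerpt. Consequently $\ker(e)\supseteq W\oplus(\sigma-1)Y_1$, where $W=Y_0$ if $\ell(\chi)=1$ and $W=Y_0\oplus\langle(\sigma-1)\chi\rangle$ if $\ell(\chi)=2$; comparing $\F_p$-dimensions (the kernel of the nonzero functional $e$ has codimension one in its domain) forces equality. Since $(\sigma-1)\F_p[G_1]\simeq A_{p-1}$ and $W$ is a direct sum of trivial modules, this yields $\ker(e)\simeq A_1^{d_0}\oplus A_{p-1}^{\mathfrak{d}_1}$ with $d_0:=\mathfrak{d}_0+\ell(\chi)-1$.

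The count then follows mechanically. In $A_1^{d_0}\oplus A_{p-1}^{\mathfrak{d}_1}$ the ultrametric property shows that for $2\le\ell\le p-1$ an element has length $\ell$ precisely when its projection to $A_{p-1}^{\mathfrak{d}_1}$ does, and the length-$\le\ell$ elements of $A_{p-1}^{\mathfrak{d}_1}$ form a subspace of $\F_p$-dimension $\ell\mathfrak{d}_1$; hence $a_\ell=p^{d_0}(p^{\ell\mathfrak{d}_1}-p^{(\ell-1)\mathfrak{d}_1})$. Separately, the length-$\le 1$ elements of $\ker(e)$ are the $\sigma$-fixed ones, a subspace of dimension $d_0+\mathfrak{d}_1$, so $a_1=p^{d_0+\mathfrak{d}_1}-1$. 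Feeding these into the formulas of the first paragraph and writing $q=p^{\mathfrak{d}_1}$, one gets $1+(p-1)\nu(\Z/p\times\Z/p\twoheadrightarrow\Z/p,K/F)=p^{d_0}q$ and $\nu(H_{p^3}\twoheadrightarrow\Z/p,K/F)=p^{d_0}q(q-1)/(p(p-1))$, whence the inner factor of the claimed formula equals $\tfrac1p+\tfrac{q-1}{p}=\tfrac qp$; and $\nu(A_\ell\rtimes\Z/p\twoheadrightarrow\Z/p,K/F)=p^{d_0}q^{\ell-1}(q-1)/(p^{\ell-1}(p-1))=\nu(H_{p^3}\twoheadrightarrow\Z/p,K/F)\cdot(q/p)^{\ell-2}$, exactly the asserted identity. (When $p=3$ only $\ell=2$ occurs, where the statement is the tautology $A_2\rtimes\Z/3=H_{3^3}$.)

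Everything outside the structural step is routine bookkeeping; the substantive point --- and the main obstacle --- is the determination of $\ker(e)$, for which one needs the additivity of the index and the vanishing of the index on $(\sigma-1)J(K)$, together with the decomposition of Proposition~\ref{prop:decomposition}. Both index properties are standard consequences of the Galois-invariance of the norm and trace and are already used implicitly in the paper, so once they are in hand the argument is a brief dimension count followed by a power-of-$p$ computation.
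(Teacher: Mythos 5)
Your proof is correct and takes essentially the same route as the paper's: both reduce the three quantities to counts of elements of prescribed length in $\ker(e)$ via Proposition \ref{prop:modules.as.galois.groups}, determine the $\F_p[G_1]$-structure of $\ker(e)$ from Proposition \ref{prop:decomposition}, and then carry out the same power-of-$p$ ratio computation. If anything, your codimension-one derivation of $\ker(e)\simeq A_1^{\mathfrak{d}_0+\ell(\chi)-1}\oplus A_{p-1}^{\mathfrak{d}_1}$ is more careful than the paper's asserted $\bigoplus^{\mathfrak{d}_0}\F_p\oplus\bigoplus^{\mathfrak{d}_1}\F_p[G_1]/(\sigma-1)^{p-2}$ (whose exponent $p-2$ is inconsistent with the paper's own subsequent count $p^{\mathfrak{d}_0+\ell\mathfrak{d}_1}$ for $\ell\leq p-1$, and which omits the possible $(\sigma-1)\chi$ summand); since only ratios of the resulting counts enter the final identity, both versions give the stated formula.
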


\begin{proof}
We know that
$$\nu(H_{p^3} \twoheadrightarrow \Z/p,K/F) = \frac{1}{p^2-p} \left|\left\{\gamma \in J(K): \ell(\gamma) = 2 \mbox{ and } \gamma \in \ker(e)\right\}\right|.$$  Likewise we have
$$\nu(A_\ell \rtimes \Z/p \twoheadrightarrow \Z/p,K/F) = \frac{1}{p^\ell-p^{\ell-1}} \left|\left\{\gamma \in J(K): \ell(\gamma) = \ell \mbox{ and } \gamma \in \ker(e)\right\}\right|.$$  

By Proposition \ref{prop:decomposition} we know that the $\F_p[G_1]$-structure of $\ker(e)$ is
$$\ker(e) \simeq \bigoplus^{\mathfrak{d}_0} \F_p \oplus \bigoplus^{\mathfrak{d}_1}\F_p[G_1]/(\sigma-1)^{p-2}.$$ Hence it is relatively simply to see that for $1\leq \ell \leq p-1$ we have
\begin{align*}
\left|\left\{\gamma \in J: \ell(\gamma) \leq \ell \mbox{ and }\gamma \in \ker(e)\right\}\right| &= p^{\mathfrak{d}_0+\ell\mathfrak{d}_1}.\\
\end{align*}
Hence for $2 \leq \ell \leq p-1$ we have
$$\left|\left\{\gamma \in J: \ell(\gamma) = \ell \mbox{ and }\gamma \in \ker(e)\right\}\right| = p^{\mathfrak{d}_0+\ell \mathfrak{d}_1} - p^{\mathfrak{d}_0+(\ell-1)\mathfrak{d}_1}= p^{\mathfrak{d}_0+(\ell-1)\mathfrak{d}_1}\left(p^{ \mathfrak{d}_1}-1\right)$$ and for $\ell=1$ we get
$$\left|\left\{\gamma \in J: \ell(\gamma) = 1 \mbox{ and }\gamma \in \ker(e)\right\}\right| = p^{\mathfrak{d}_0+ \mathfrak{d}_1} - 1.$$
Hence for $2 \leq \ell \leq p-1$, one calculates
\begin{align}
\nu(A_\ell \rtimes \Z/p \twoheadrightarrow \Z/p,K/F) &= \frac{1}{p^\ell-p^{\ell-1}}\left|\{\gamma \in \ker(e): \ell(\gamma) = \ell\}\right|\notag
\\
&=\frac{1}{p^{\ell-1}(p-1)}p^{\mathfrak{d}_0+(\ell-1)\mathfrak{d}_1}\left(p^{ \mathfrak{d}_1}-1\right)\notag
\\\label{eq:realization.count.rewritten}
&=p^{\mathfrak{d}_0+\mathfrak{d}_1-1}\frac{p^{\mathfrak{d}_1}-1}{p-1}\left(p^{\mathfrak{d}_1-1}\right)^{\ell-2}.
\end{align}
Of course the case $\ell=1$ follows in a similar way:
\begin{align}\label{eq:count.on.Zp.times.Zp}
\nu(\Z/p \times \Z/p \twoheadrightarrow \Z/p,K/F) = \frac{p^{\mathfrak{d}_0+\mathfrak{d}_1}-1}{p-1}.
\end{align}  Because it will be particularly useful in a moment, let us also note that
\begin{align}\label{eq:count.on.Hp3}
\nu(H_{p^3} \twoheadrightarrow \Z/p,K/F) = p^{\mathfrak{d}_0+\mathfrak{d}_1-1}\frac{p^{\mathfrak{d}_1}-1}{p-1}.
\end{align}

One can solve for $p^{\mathfrak{d}_0+\mathfrak{d}_1}$ and $p^{\mathfrak{d}_1}$ using equations (\ref{eq:count.on.Zp.times.Zp}) and (\ref{eq:count.on.Hp3}), and ultimately recover 
\begin{align*}
p^{\mathfrak{d}_0+\mathfrak{d}_1} &= 1 + (p-1)\nu(\Z/p\times\Z/p \twoheadrightarrow \Z/p,K/F)\\
p^{\mathfrak{d}_1} &= 1+\frac{p(p-1)}{p^{\mathfrak{d}_0+\mathfrak{d}_1}}\nu(H_{p^3}\twoheadrightarrow\Z/p,K/F)\\
&=1+ \frac{p(p-1)\nu(H_{p^3}\twoheadrightarrow \Z/p,K/F)}{1+(p-1)\nu(\Z/p\times\Z/p \twoheadrightarrow\Z/p,K/F)}
\end{align*}

With these in hand, we can reexpress (\ref{eq:realization.count.rewritten}) to satisfy the statement of the Proposition:
\begin{align*}\nu(A_\ell \rtimes \Z/p &\twoheadrightarrow\Z/p,K/F) =
p^{\mathfrak{d}_0+\mathfrak{d}_1-1}\frac{p^{\mathfrak{d}_1}-1}{p-1}\left(p^{\mathfrak{d}_1-1}\right)^{\ell-2} \\
&= \nu(H_{p^3} \twoheadrightarrow \Z/p,K/F) \left(\frac{1}{p}+\frac{(p-1)\nu(H_{p^3}\twoheadrightarrow \Z/p,K/F)}{1+(p-1)\nu(\Z/p\times\Z/p\twoheadrightarrow\Z/p,K/F)}\right)^{\ell-2}.
\end{align*}
\end{proof}

We finish with some realization multiplicity results for groups from the family $A_i \rtimes G_1$ and $A_i \bullet G_1$.  The realization multiplicity of $G$, written $\nu(G)$, is the minimal positive value for $\nu(G,F)$ as $F$ ranges over all fields; likewise, the realization multiplicity for an embedding problem $G \twoheadrightarrow Q$, written $\nu(G \twoheadrightarrow Q)$, is the minimal positive value for $\nu(G \twoheadrightarrow Q, K/F)$ as $K/F$ ranges over all fields with $\Gal(K/F) \simeq Q$.  There are very few realization multiplicity results known for nonabelian $p$-groups; the known results come from \cite{J2} and are $\nu(M_{p^3}) = p$, $\nu(M_{p^3}\times \Z/p) = p^2-1$, and $\nu((\Z/p)^k \times H_{p^3}) = 1$ for $k \in \Z_{\geq 0}$.



\begin{example}\label{ex:unique.hp3}
Consider a field $F$ with $\textrm{char}(F) \neq p$ and $\dim_{\F_p}(J(F)) = 2$ and such that $G_F:=\Gal(F_{\textrm{sep}}/F)$ is the free pro-$p$ group on two generators.  (Such a field exists as seen in \cite[Cor.~23.1.2]{FJ}.)  Now we claim that $\xi_p \in F$, since otherwise we would have $F(\xi_p) \subseteq F_{\textrm{sep}}$ and $1<[F(\xi_p):F] \leq p-1 <p$.  But this would imply that $G_F$ has a quotient whose order was not a power of $p$, a clear contradiction.  

Since $G_F$ is a free pro-$p$ group we have $H^2(F) = 0$, and therefore $(f_1) \cup (f_2) = 0$ for each $f_1,f_2 \in J(F)$.  But then we conclude that $f_1 \in N_{F(\root{p}\of{f_2})/F}(F(\root{p}\of{f_2}))$ for all $f_1,f_2 \in F^\times$.  Suppose, then, that $f_1,f_2$ are generators for $J(F)$.  If we let $\alpha \in F(\root{p}\of{f_2})$ be given so that $N_{F(\root{p}\of{f_2})/F}(\alpha) = f_1$, and if we write $\sigma$ for the generator of the $G_1$-extension $F(\root{p}\of{f_2})/F$, then $\hat \alpha = \alpha^{(\sigma-1)^{p-2}}$ has $\ell(\hat \alpha) = 2$ and $e(\hat \alpha) = 0$.  Hence $\langle \hat \alpha \rangle$ corresponds to a solution to the embedding problem $H_{p^3} \twoheadrightarrow \Z/p$.


\end{example}

\begin{corollary}\ 
\begin{enumerate}
\item\label{it:real.mult.hp3} $\nu(H_{p^3}) = 1$;
\item\label{it:real.mult.g1.by.ap} $\nu(A_p \rtimes \Z/p) = p^2-1$
\item\label{it:real.mult.g1.split.ai} $\nu(A_i \rtimes \Z/p) =p+1$ for $2 < i < p-1$
\item\label{it:real.mult.g1.nonsplit.ai} $\nu(A_i \bullet \Z/p) = p^2-1$ for $2<i<p-1$; and
\end{enumerate}
\end{corollary}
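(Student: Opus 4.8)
The plan is to prove each equality $\nu(G)=v$ by combining a lower bound --- $\nu(G,F)\geq v$ for every field $F$ with $\nu(G,F)>0$ --- with the exhibition of a field $F_0$ at which $v$ is attained. For $F_0$ I take the field of Example \ref{ex:unique.hp3} (so $\textrm{char}(F_0)\neq p$, $\xi_p\in F_0$, $\dim_{\F_p}J(F_0)=2$, and $G_{F_0}$ is free pro-$p$ of rank $2$). The first step is to pin down the $\F_p[G_1]$-module $J(K)$ for each of the $p+1$ subextensions $K/F_0$ with $\Gal(K/F_0)\simeq G_1$. Since $G_{F_0}$ is free pro-$p$ we have $H^2(F_0)=0$, so every such $K/F_0$ embeds in a $\Z/p^2$-extension and hence $i(K/F_0)=-\infty$, i.e.\ $\ell(\chi)=1$ in Proposition \ref{prop:decomposition}. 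The Schreier index formula makes $G_K$ free pro-$p$ of rank $p+1$, so $\dim_{\F_p}J(K)=p+1$; comparing dimensions in Proposition \ref{prop:decomposition} leaves only $J(K)\simeq\bigoplus^{p+1}A_1$ or $J(K)\simeq A_1\oplus\F_p[G_1]$, and the former is excluded because Example \ref{ex:unique.hp3} produces an element of $J(K)$ of length $2$. Thus $J(K)\simeq A_1\oplus\F_p[G_1]$, i.e.\ $\mathfrak{d}_0(K)=0$ and $\mathfrak{d}_1(K)=1$, for every such $K$.

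For part (\ref{it:real.mult.hp3}) I compute over $F_0$. A direct count on $A_1\oplus\F_p[G_1]$ shows $\left|\{\gamma\in J(K)\cap\ker e:\ell(\gamma)=2\}\right|=p^2-p$, so by Proposition \ref{prop:modules.as.galois.groups} one has $\nu(H_{p^3}\twoheadrightarrow G_1,K/F_0)=1$ for every $K/F_0$; since each $H_{p^3}$-extension of $F_0$ has exactly $p+1$ quotients $\simeq G_1$, this gives $\nu(H_{p^3},F_0)=\tfrac{1}{p+1}\sum_{K/F_0}1=1$. As $\nu(H_{p^3})$ is a positive integer that is at most $\nu(H_{p^3},F_0)=1$, part (\ref{it:real.mult.hp3}) follows.

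For parts (\ref{it:real.mult.g1.split.ai}) and (\ref{it:real.mult.g1.nonsplit.ai}), fix $2<i<p-1$. Dividing the expressions in (\ref{eq:realization.count.rewritten}) and (\ref{eq:count.on.Hp3}) gives, for any $\Z/p$-extension $K/F$, that $\nu(A_i\rtimes G_1\twoheadrightarrow G_1,K/F)$ and $\nu(H_{p^3}\twoheadrightarrow G_1,K/F)$ vanish together, and are otherwise related by $\nu(A_i\rtimes G_1\twoheadrightarrow G_1,K/F)=p^{(i-2)(\mathfrak{d}_1(K)-1)}\nu(H_{p^3}\twoheadrightarrow G_1,K/F)\geq\nu(H_{p^3}\twoheadrightarrow G_1,K/F)$. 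Now if $\nu(A_i\rtimes\Z/p,F)>0$ and $\langle\gamma\rangle\subseteq J(K)$ solves $A_i\rtimes G_1\twoheadrightarrow G_1$ over $K/F$, then (as $\ker e$ is an $\F_p[G_1]$-submodule of its domain) $(\sigma-1)^{i-2}\gamma$ has length $2$ and index zero, so $F$ admits an $H_{p^3}$-extension and $\nu(H_{p^3},F)\geq1$. Since $i\geq3$, Lemma \ref{le:different.extensions.for.long.modules} gives $\nu(A_i\rtimes\Z/p,F)=\sum_{K/F}\nu(A_i\rtimes G_1\twoheadrightarrow G_1,K/F)$, so summing the displayed inequality and using that each $H_{p^3}$-extension has $p+1$ quotients $\simeq G_1$ yields $\nu(A_i\rtimes\Z/p,F)\geq\sum_{K/F}\nu(H_{p^3}\twoheadrightarrow G_1,K/F)=(p+1)\nu(H_{p^3},F)\geq p+1$; over $F_0$ every $\mathfrak{d}_1(K)=1$, so all inequalities are equalities and $\nu(A_i\rtimes\Z/p,F_0)=p+1$. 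This proves part (\ref{it:real.mult.g1.split.ai}). Part (\ref{it:real.mult.g1.nonsplit.ai}) is then immediate from the theorem (proved above) that $\nu(A_i\bullet G_n,F)=(p-1)\nu(A_i\rtimes G_n,F)$ whenever $p^{n-1}+2\leq i<p^n$: taking $n=1$ (so $3\leq i<p$, which contains our range) and minimizing over $F$ gives $\nu(A_i\bullet\Z/p)=(p-1)\nu(A_i\rtimes\Z/p)=(p-1)(p+1)=p^2-1$.

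For part (\ref{it:real.mult.g1.by.ap}) the same framework applies: since $i=p\geq3$, Lemma \ref{le:different.extensions.for.long.modules} again gives $\nu(A_p\rtimes\Z/p,F)=\sum_{K/F}\nu(A_p\rtimes G_1\twoheadrightarrow G_1,K/F)$, and by Proposition \ref{prop:modules.as.galois.groups} each summand is the number of free rank-one $\F_p[G_1]$-submodules of $J(K)$, which by Proposition \ref{prop:decomposition} is determined by $\ell(\chi)$, $\mathfrak{d}_0(K)$ and $\mathfrak{d}_1(K)$ and is positive exactly when $\mathfrak{d}_1(K)\geq1$. The work is then to show that whenever this total is positive over some $F$ it is already at least $p^2-1$, and to produce a field attaining $p^2-1$; since $\mathfrak{d}_1(K)\geq1$ forces $\dim_{\F_p}J(F)\geq2$ and hence at least $p+1$ subextensions $K'/F$, the argument must control how many of these are compelled to carry a free $\F_p[G_1]$-summand and how large their contributions are, rather than settling for the crude bound coming from a single contributing $K$. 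I expect this --- extracting a \emph{sharp} lower bound in part (\ref{it:real.mult.g1.by.ap}) and locating the correct minimal field (note that $F_0$ itself gives the larger value $(p+1)p$ here) --- to be the main obstacle; by contrast, once the identification $J(K)\simeq A_1\oplus\F_p[G_1]$ over $F_0$ is in place, parts (\ref{it:real.mult.hp3}), (\ref{it:real.mult.g1.split.ai}) and (\ref{it:real.mult.g1.nonsplit.ai}) reduce to the routine module counts indicated above.
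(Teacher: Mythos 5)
Your treatment of parts (\ref{it:real.mult.hp3}), (\ref{it:real.mult.g1.split.ai}) and (\ref{it:real.mult.g1.nonsplit.ai}) is correct and follows the paper's route: the paper likewise works over the field of Example \ref{ex:unique.hp3} and uses the identification $J(K)\simeq A_1\oplus\F_p[G_1]$ for every $\Z/p$-subextension $K$ (asserted there with a slightly different justification than your Schreier-rank count, but with the same conclusion), and it explicitly leaves (\ref{it:real.mult.g1.split.ai}) and (\ref{it:real.mult.g1.nonsplit.ai}) to the reader. Your completion of those two --- the per-extension ratio $p^{(i-2)(\mathfrak{d}_1-1)}$ extracted from equation (\ref{eq:realization.count.rewritten}), summation over the $\Z/p$-subextensions via Lemma \ref{le:different.extensions.for.long.modules}, and the identity $\nu(A_i\bullet G_1,F)=(p-1)\nu(A_i\rtimes G_1,F)$ --- is exactly the intended argument and is sound.

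The genuine gap is part (\ref{it:real.mult.g1.by.ap}), which you leave open; moreover your planned strategy of locating a field attaining $p^2-1$ would fail, because no such field exists within this framework. For $\ell(\gamma)=p=p^n$ the generator lies outside the domain of the index, so by Proposition \ref{prop:modules.as.galois.groups} solutions to $A_p\rtimes G_1\twoheadrightarrow G_1$ over $K/F$ are simply the free cyclic submodules of $J(K)$, with no index condition; over the field of Example \ref{ex:unique.hp3} each of the $p+1$ subextensions carries exactly $p$ of these, and Lemma \ref{le:different.extensions.for.long.modules} keeps them distinct, giving $p^2+p$, exactly as you computed. The paper's lower bound matches this upper bound: a single $A_p\rtimes G_1$-extension of $F$ yields $\langle\gamma\rangle\simeq A_p$ in some $J(K)$, hence the $p$ distinct free modules $\langle\gamma+c\chi\rangle$ over that $K$; and the $H_{p^3}$-subextension it contains has $p+1$ distinct $\Z/p$-quotient extensions of $F$, over each of which Proposition \ref{prop:automatic.realizations}(\ref{it:split.realizes.split}) iterated forces a solution to $A_p\rtimes G_1\twoheadrightarrow G_1$ and therefore $p$ solutions, for a total of at least $p(p+1)=p^2+p$ distinct extensions. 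So the paper's own proof establishes $\nu(A_p\rtimes\Z/p)=p^2+p$, and the value $p^2-1$ printed in the statement of part (\ref{it:real.mult.g1.by.ap}) is inconsistent with that proof; the correct move is to prove the $p^2+p$ lower bound just described, not to search for a smaller example.
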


\begin{remark*}
Though it is already known, we reprove statement (\ref{it:real.mult.hp3}) for two reasons: to again showcase a module-theoretic perspective, and because statement (\ref{it:real.mult.hp3}) provides the necessary example in proving statements (\ref{it:real.mult.g1.by.ap}-\ref{it:real.mult.g1.nonsplit.ai}).
\end{remark*}

\begin{proof}
Statement (\ref{it:real.mult.hp3}) is a consequence of Example \ref{ex:unique.hp3}. In that example we are told that there is a $\Z/p$-extension $K/F$ for which the embedding problem $H_{p^3} \twoheadrightarrow \Z/p$ is solvable; call this $H_{p^3}$-extension $L/F$.  Combined with the fact that $\ker|J(F) \to J(K)| = p$, one can use this to show that  $J(K) \simeq X \oplus \F_p[G_1]$, with $\dim_{\F_p}(X) = 1$.  But note that in fact $L/F$ is solution to the embedding problem $H_{p^3} \twoheadrightarrow \Z/p$ for \emph{any} $\Z/p$-extension $\tilde K/F$, from which we can deduce that $J(\tilde K) \simeq J(K)$.  Hence over each $\Z/p$-extension of $F$ there is a unique module isomorphic to $A_2$ and generated by an element of trivial index.  Each of these modules corresponds to the same $H_{p^3}$-extension $L/F$.  

One can also prove this result by thinking of $H_{p^3}$ as an extension of $\Z/p \times \Z/p$ by $\Z/p$.  Consider the field $F$ from Example \ref{ex:unique.hp3} again, and note that $F$ has a unique $\Z/p \times \Z/p$-extension $K$.  Because any $H_{p^3}$-extension of $F$ contains a unique $\Z/p \times \Z/p$ quotient extension over $F$, we see that any $H_{p^3}$-extension of $F$ contains $K/F$.  Now recall that \cite[Thm.~6.6.1]{JLY} tells us that if $K = F(\root{p}\of{a},\root{p}\of{b})$, and if $w \in F(\root{p}\of{a})$ is an element so that $L = F(\root{p}\of{w},\root{p}\of{b})$ is an $H_{p^3}$-extension of $F$, then all other solutions to the $H_{p^3}$ embedding problem over $K/F$ take the form $L_f := K(\root{p}\of{fw},\root{p}\of{b})$.  But notice that since $K/F$ is the unique $\Z/p \times \Z/p$-extension of $F$, we have $\root{p}\of{f} \in K$; it follows that $L_f = L$, and so $F$ has a unique $H_{p^3}$-extension.


The proofs of statements (\ref{it:real.mult.g1.by.ap}--\ref{it:real.mult.g1.nonsplit.ai}) are all relatively similar and use Example \ref{ex:unique.hp3} to establish upper bounds; we will prove (\ref{it:real.mult.g1.by.ap}) and leave the verification of the other two statements to the reader.  So let $F$ be the field from Example \ref{ex:unique.hp3}, and note that for each $\Z/p$-extension $K/F$ there are $p$ modules isomorphic to $A_p$.  Each of these corresponds to a distinct solution to the embedding problem $A_p \rtimes G_1 \twoheadrightarrow G_1$.  By Lemma \ref{le:different.extensions.for.long.modules}, the solutions to this embedding problem over each of the $\Z/p$-extensions of $F$ are distinct.  Hence we have $\nu(A_p \rtimes G_1) \leq p^2+p$.  

For the lower bound, observe that if there is a single $A_p \rtimes G_1$-extension of a field $F$, then the $\Z/p$-subextension $K/F$ corresponding to the natural projection $A_p \rtimes G_1 \twoheadrightarrow G_1$ has the property that $J(K)$ contains a module $\langle \gamma \rangle$ isomorphic to $A_p$.  If we choose an element $\chi$ as in Proposition \ref{prop:decomposition}, then each of the modules $\langle \gamma +c\chi \rangle$ for $c \in \{0,1,\cdots,p-1\}$ are also isomorphic to $A_p$.  Observe additionally that any of the $p$ other $\Z/p$-extensions of $F$ admit a solution to the embedding problem $A_2 \rtimes G_1$ (i.e., the $H_{p^3}$ subextension from the original $A_p \rtimes G_1$ extension), and hence by Proposition \ref{prop:automatic.realizations}(\ref{it:split.realizes.split}) also admits at least one solution to the embedding problem $A_p \rtimes G_1 \twoheadrightarrow G_1$.  Since we have already argued that one such solution forces the appearance of $p$ solutions, this tells us that $F$ must have at least $p^2+p$ extensions with Galois group $A_p \rtimes G_1$.
\end{proof}


\begin{thebibliography}{99}

\bibitem{AnFu} {\sc F.~Anderson}, {\sc K.~Fuller}. \emph{Rings and categories of modules}. Graduate Texts in Mathematics 13. New
York: Springer-Verlag, 1973.


\bibitem{BP} {\sc F.~Bertrandias}, {\sc J.J.~Payan}. $\Gamma $-extensions et invariants cyclotomiques. \emph{Ann.Scient.Ec.Norm.Sup.}, {\bf 5} (1972), no.~4, 517--543 .

\bibitem{BT1} {\sc F.~Bogomolov}, {\sc Y.~Tschinkel}. Universal spaces for unramified Galois cohomology. Available at \url{http://www.math.nyu.edu/~tschinke/papers/yuri/14bloch/bloch18.pdf}.  To appear in Brauer groups and obstructions problems: moduli spaces and arithmetic, (A.~Auel, B.~Hassett, A.~Varilly-Alvarado, and B.~Viray eds.), (2014).

\bibitem{BT2} {\sc F.~Bogomolov}, {\sc Y.~Tschinkel}. Commuting elements in Galois groups of function fields. In \emph{Motives, polylogarithms and Hodge theory}, edited by F.~Bogomolov, L.~Katzarkov.  International Press (2002), 75--120.

\bibitem{BT3} {\sc F.~Bogomolov}, {\sc Y.~Tschinkel}. Introduction to birational anabelian geometry.  In \emph{Current developments in algebraic geometry}, edited by L.~Caporaso, J.~McKernan, M.~Mustata, M.~Popa, volume 59 of MSRI Publications,  Cambridge University Press (2012),  17--63.

\bibitem{Br} {\sc G.~Brattstr\"{o}m}. On p-groups as Galois groups. \textit{Math.~Scandinavica} {\bf 65} (1989), no.~2, 165--174.

\bibitem{CEM} {\sc S.~Chebolu}, {\sc I.~Efrat}, {\sc J.~Min\'a\v{c}}. Quotients of absolute Galois groups which determine the entire Galois cohomology. \emph{Math.~Annalen.} {\bf 352} (2012), no.~1, 205--221.

\bibitem{DF} {\sc D.~Dummit}, {\sc R.~Foote}. \emph{Abstract algebra, 2nd ed.} Upper Saddle River, NJ: Prentice Hall, 1999.

\bibitem{Ef} {\sc I.~Efrat}. The Zassenhaus filtration, Massey products, and representations of profinite groups. \emph{Adv.~Math} {\bf 263} (2014), 389--411.

\bibitem{Ef2} {\sc I.~Efrat}. Filtrations of free groups as intersections.  \emph{Arch.~Math.} (Basel) {\bf 103} (2014), no.~5, 411--420.  Available at \url{http://arxiv.org/pdf/1312.1811}.

\bibitem{EM11} {\sc I.~Efrat}, {\sc J.~Min\' a\v c}. On the descending central sequence of absolute Galois groups. \emph{Amer.~J.~Math.}, {\bf 133} (2011), 1503--1532.

\bibitem{EM2} {\sc I.~Efrat}, {\sc J.~Min\' a\v c}. Galois groups and cohomological functors. \emph{Trans.~Amer.~Math.~Soc.} (2016), \url{http://dx.doi.org/10.1090/tran/6724}, in press.  Available at \url{http://arxiv.org/abs/1103.1508}.

\bibitem{EM12} {\sc I.~Efrat}, {\sc J.~Min\' a\v c}. Small Galois groups that encode valuations. \emph{Acta.~Arith.} {\bf 156} (2012), no.~1, 7--17.

\bibitem{FJ} {\sc M.~Fried}, {\sc M.~Jarden}. \emph{Field Arithmetic}.  Ergebnisse der Mathematik und ihrer Grenzgebiete, Vol.~11.  Berlin: Springer Berlin Heidelberg, 2005.  

\bibitem{GS} {\sc P.~Gille}, {\sc T.~Szamuely}. \emph{Central simple algebras and Galois cohomology.} Cambridge studies in advanced mathematics 101, Cambridge University Press, 2006.

\bibitem{Hall} {\sc M.~Hall}. \textit{The theory of groups}. Macmillian Company, New York, 1959.

\bibitem{HW} {\sc M.~Hopkins}, {\sc K.~Wickelgren}. Splitting varieties for triple Massey products.  \emph{J.~Pure Appl. Algebra}, {\bf 219} (2015), no.~5, 1304--1319.  Available at \url{http://arxiv.org/pdf/1210.4964}.

\bibitem{ILF} {\sc V.V.~Ishkhanov}, {\sc B.B.~Lur$'$e}, {\sc D.K.~Faddeev}. \emph{The embedding problem in Galois theory}. Translations in mathematical monographs, vol.~165. Amer.~Math.~Soc., Providence, 1997.

\bibitem{J1} {\sc C.~U.~Jensen}. On the representations of a group as a Galois group over an arbitrary field. Th\'eorie des nombres (Quebec, PQ, 1987), 441--458. Berlin: de Gruyter, 1989.

\bibitem{J2} {\sc C.U.~Jensen}. Finite groups as Galois groups over arbitrary fields.  Proceedings of the International Conference on Algebra, Part 2 (Novosibirsk, 1989), 435--448. Contemp.~Math.  {\bf 131}, Part 2. Providence, RI: American Mathematical Society, 1992.


\bibitem{JLY} {\sc C.U.~Jensen}, {\sc A.~Ledet}, {\sc N.~Yui}. \emph{Generic polynomials: constructive aspects of the inverse Galois problem}. Mathematical Sciences Research Institute Publications 45. Cambridge: Cambridge University Press, 2002.


\bibitem{KC} {\sc V.~Kac}, {\sc P.~Cheung}. Quantum calculus. Springer, 2002.

\bibitem{Lang} {\sc S.~Lang}. \emph{Algebra}, 3rd ed. Graduate texts in mathematics, 211.  Springer, 2005.

\bibitem{Lbook} {\sc A.~Ledet}. Brauer type embedding problems. Fields Institute Monographs 21. Providence, RI: AMS, 2005.

\bibitem{LMSSembed}  {\sc N.~Lemire}, {\sc J.~Min\'a\v{c}}, {\sc A.~Schultz}, {\sc J.~Swallow}. Galois module structure of Galois cohomology for embeddable cyclic extensions of degree $p^n$. \emph{J.~London Math.~Soc.} {\bf 81} (2010), no.~3, 525--543.

\bibitem{Mass} {\sc R.~Massy}. Construction de $p$-extensions Galoisiennes d'un corps de caract\'{e}ristique diff\'{e}rente de $p$. \emph{J.~Algebra} {\bf 109} (1987), 508--535.


\bibitem{Mich1} {\sc I.~Michailov}. Induced orthogonal representations of Galois groups. \textit{J.~Alg.} {\bf 322} (2009), 3713--3732.

\bibitem{Mich2} {\sc I.~Michailov}. Four non-abelian groups of order $p^4$ as Galois groups. \textit{J.~Alg.} {\bf 307} (2007), 287--299.

\bibitem{Mi} {\sc J.~Milnor}. Algebraic $K$-theory and quadratic forms. \emph{Inventiones Math.} {\bf 9} (1970), 318--344.

\bibitem{MT4} {\sc J.~Min\'a\v{c}}, {\sc M.~Rogelstad}, {\sc N.D.~Tan}. Dimensions of Zassenhaus filtration subquotients of some pro-$p$-groups. \emph{Israel J.~Math.} {\bf 212} (2016), no.~2, 825--855.   Available at \url{http://arxiv.org/pdf/1405.6980}.

\bibitem{MSS1} {\sc J.~Min\'a\v{c}}, {\sc A.~Schultz}, {\sc J.~Swallow}. Galois module structure of the $p$th-power classes of cyclic extensions of degree $p^n$.  Proc.~London Math.~Soc. {\bf 92} (2006), no.~2, 307--341.

\bibitem{MSSauto} {\sc J.~Min\'a\v{c}}, {\sc A.~Schultz}, {\sc J.~Swallow}. Automatic realizations of Galois groups with cyclic quotient of order $p^n$. \emph{J.~Th\'{e}or.~Nombres Bordeaux} {\bf 20} (2008), 419--430.

\bibitem{MSp} {\sc J.~Min\'a\v{c}}, {\sc M.~Spira}. Witt rings and Galois groups.  \emph{Ann.~Math.} {\bf 144} (1996), 36--60.

\bibitem{MSp2} {\sc J.~Min\'a\v{c}}, {\sc M.~Spira}. Formally real fields, Pythagorean fields, $C$-fields and $W$-groups. \emph{Math.~Z.} {\bf 205} (1990), 519--530.

\bibitem{MSauto} {\sc J.~Min\'a\v{c}}, {\sc J.~Swallow}.  Galois embedding problems with cyclic quotient of order $p$. Israel J.~Math.
{\bf 145} (2005), 93--112.


\bibitem{MT1} {\sc J.~Min\'a\v{c}}, {\sc N.D.~Tan}. Triple Massey products over global fields. \emph{Doc.~Math.} {\bf 20} (2015), 1467--1480.  Available at \url{http://arxiv.org/abs/1403.4586}. 

\bibitem{MT2} {\sc J.~Min\'a\v{c}}, {\sc N.D.~Tan}. Triple Massey products and Galois theory. \emph{J.~Eur.~Math.~Soc.} (JEMS) (2016), in press. Available at \url{http://arxiv.org/abs/1307.6624}. 

\bibitem{MT3} {\sc J.~Min\'a\v{c}}, {\sc N.D.~Tan}. The kernel unipotent conjecture and the vanishing of Massey products of odd rigid fields. (With an appendix by {\sc I.~Efrat}, {\sc J.~Min\'{a}\v{c}}, and {\sc N.D.~Tan}.) \emph{Adv.~Math.} {\bf 273} (2015), 242--270. Available at \url{http://arxiv.org/abs/1312.2655}.


\bibitem{NSW} {\sc J.~Neukirch}, {\sc A.~Schmidt}, {\sc K.~Wingberg}. \emph{Cohomology of Number Fields}, 2nd ed. Berlin: Springer-Verlag, 2008.

\bibitem{Pf} {\sc A.~Pfister}. Eine Bemerkung zum normenresthomomorphismus $h:K^*F/2 \to H^*(F,\Z/2)$. \emph{Arch.~Math.} {\bf 81} (2003), 272--284.

\bibitem{P2} {\sc A.~Pfister}. On the Milnor conjectures: history, influence, applications. \emph{Jber.~d.~Dt.~Math-Verein.} {\bf 102} (2000), 15--41.

\bibitem{Pierce} {\sc R.C.~Pierce}. Associative algebras. Graduate Texts in Mathematics, Vol.~88.  Springer-Verlag, 1982.

\bibitem{Pop1} F.~Pop.  On Grothendieck's conjecture of birational anabelian geometry. \emph{Ann.~of Math.} {\bf 139} (1994), 145--182.

\bibitem{Pop2} F.~Pop.  On the birational program initiated by Bogomolov I.  \emph{Invent.~Math.} {\bf 187} (2012), 511--533.

\bibitem{Schultz2} {\sc A.~Schultz}. Parameterizing solutions to any Galois embedding problem over $\Z/p^n\Z$ with elementary $p$-abelian kernel.  \emph{J.~Algebra}. {\bf 411} (2014), 50--91.

\bibitem{Sha1} {\sc I.R.~Shafarevich}. Construction of fields of algebraic number fields with given solvable groups (in Russian).  \emph{Izv.~Akad.~Nauk SSSR} {\bf 18} (1954), no.~6, 525--578.  English translation in \emph{Amer.~Math.~Soc.~Transl.} {\bf 4} (1956), 185--237.

\bibitem{Sha2} {\sc I.R.~Shafarevich}. Factors of a decreasing central series (in Russian). \emph{Mat.~Zametki} {\bf 45} (1989), 114--117.  English transition in \emph{Math.~Notes} {\bf 45} (1989), 262--264.

\bibitem{Sha3} {\sc I.R.~Shafarevich}. On the construction of fields with a given Galois group of order $\ell^\alpha$ (in Russian). \emph{Izv.~Akad.~Nauk SSSR} {\bf 18} (1954), 261--296.  English transition in \emph{Amer.~Mat.~Soc.~Transl.} {\bf 4} (1956), 107--142.

\bibitem{Srin} {\sc V.~Srinivas}. \emph{Algebraic $K$-theory,} reprint of 2nd ed.  Part of \emph{Modern Birkh\"{a}user classics}.  Boston, MA: Birkh\"{a}user, 2008.


\bibitem{T1} {\sc A.~Topaz}. Abelian-by-central Galois groups of fields I: a formal description. To appear in \emph{Trans.~Amer.~Math.~Soc.} Available at \url{http://arxiv.org/pdf/1310.5613}.

\bibitem{T2} {\sc A.Topaz}. Commuting-liftable subgroups of Galois groups II. To appear in \emph{J.~Reine Angew.~Math.}  Available at \url{http://arxiv.org/pdf/1208.0583}.

\bibitem{V}{\sc V. Voevodsky}. On motivic cohomology with $\mathbf{Z}/l$-coefficients. {\bf 174} (2011), 401--438.

\bibitem{Wat} {\sc W.~Waterhouse}.  The normal closures of certain Kummer extensions. Canad. Math.~Bull. {\bf 37} (1994), no.~1,
133--139.

\bibitem{Wh} {\sc G.~Whaples}. Algebraic extensions of arbitrary fields. \emph{Duke Math.~J.} {\bf 24} (1957), 201--204.

\bibitem{Wi} {\sc E.~Witt}. Konstruktion von galoisschen k\"{o}rpern der charakteristik $p$ zu vorgegebener gruppe der ordnung $p^f$. \emph{J.~Reine Angew.~Math.} {\bf 174} (1936), 237--245.


\end{thebibliography}
\end{document}